\renewenvironment{proof}[1][\proofname] {\par\pushQED{\qed}\normalfont\topsep6\p@\@plus6\p@\relax\trivlist\item[\hskip\labelsep\bfseries#1\@addpunct{.}]\ignorespaces}{\popQED\endtrivlist\@endpefalse}
\newtheorem{proposition}{Proposition}[section]
\newtheorem{lemma}[proposition]{Lemma}
\newtheorem{theorem}[proposition]{Theorem}
\newtheorem{question}[proposition]{Question}
\theoremstyle{definition}
\newtheorem{definition}[proposition]{Definition}
\newtheorem{remark}[proposition]{Remark}
\newtheorem*{remark*}{Remark}
\newtheorem*{theorem*}{Theorem}
\newtheorem{claim}[proposition]{Claim}
\newcommand{\FF}{\mathcal{F}}
\newcommand{\GG}{\mathcal{G}}
\newcommand{\HH}{\mathcal{H}}
\newcommand{\II}{\mathcal{I}}
\newcommand{\zpz}{\mathbb{Z}/p\mathbb{Z}}
\newcommand{\mb}{\mathbb}
\newcommand{\mc}{\mathcal}
\newcommand{\floor}[1]{{\lfloor #1 \rfloor}}
\newcommand{\ceil}[1]{{\lceil #1 \rceil}}
\newcommand{\modp}[1]{\textnormal{ (mod }#1\textnormal{)}}
\DeclareMathOperator{\spann}{span}
\title{\vspace{-0.8cm} Sunflowers and Ramsey problems for restricted intersections}
\author{Barnabás Janzer\thanks{Department of Mathematics, ETH Zürich, Switzerland. Research supported in part by SNSF grant
200021-228014. Email: \texttt{\{barnabas.janzer,~zhihan.jin,~benjamin.sudakov\}@math.ethz.ch}} \and Zhihan Jin\footnotemark[1] \and Benny Sudakov\footnotemark[1] \and Kewen Wu\thanks{Computing and Mathematical Sciences Department, Caltech, CA, USA. Email: \texttt{shlw\textunderscore kevin@hotmail.com}.}}
\date{\vspace{-21pt}}
\begin{document}

\maketitle
\begin{abstract}
Extremal problems on set systems with restricted intersections have been an important part of combinatorics in the last 70 years. In this paper, we study the following Ramsey version of these problems. Given a set $L\subseteq \{0,\dots,k-1\}$ and a family $\mathcal{F}$ of $k$-element sets which does not contain a sunflower with $m$ petals whose kernel size is in $L$, how large a subfamily of $\mathcal{F}$ can we find in which no pair has intersection size in $L$? We give matching upper and lower bounds, determining the dependence on $m$ for all $k$ and $L$. This problem also finds applications in quantum computing.

As an application of our techniques, we also obtain a variant of Füredi’s celebrated semilattice lemma, which is a key tool in the powerful delta-system method. We prove that one cannot remove the double-exponential dependency on the uniformity in Füredi's result, however, we provide an alternative with significantly better, single-exponential dependency on the parameters, which is still strong enough for most applications of the delta-system method.
\end{abstract}

\section{Introduction} \label{sec:intro}
A large variety of problems and results in extremal combinatorics deal with estimates on the sizes of families of sets with some restrictions on the intersections of their members. In particular, given $n \ge k \ge 1$ and $L \subseteq [0,k-1]$, one of the central problems in extremal set theory is to determine the largest size of a set system $\mc{F} \subseteq \binom{[n]}{k}$ where any two distinct sets $F,F' \in \mc{F}$ satisfy $|F \cap F'| \in L$.\footnote{Throughout this paper, $[a,b]$ stands for the set $\{a,a+1,\dots,b\}$, $[a]$ denotes the set $[1,a]$, and given a set $A$, $\binom{A}{k}$ is the set of $k$-element subsets of $A$.} 
Such set systems are often referred to as {\em set systems with restricted intersections} or \emph{$(n,k,L)$-systems}.
In the vast body of literature in this area, two celebrated results are the Ray-Chaudhuri--Wilson theorem~\cite{RW75} and the Frankl--Wilson theorem~\cite{FW81}.
The former states that if $L$ contains $s$ elements, then $|\mc{F}| \le \binom{n}{s}$; the latter states that if the elements of $L$ correspond to $s$ residues modulo some prime $p$ while $k$ corresponds to another residue, then $|\mc{F}| \le \binom{n}{s}$.
Both bounds are tight in general and have found various applications, for example, in explicit constructions of Ramsey graphs, combinatorial geometry, coding theory, the chromatic numbers of Euclidean spaces, and more.
We refer the reader to the survey by Frankl and Tokushige~\cite{FT16} and the book by Babai and Frankl~\cite{babai2022linear} for a detailed discussion on set systems with restricted intersections.

In this paper, for convenience, instead of $|F \cap F'| \in L$, we will mostly talk about the opposite condition that $|F\cap F'| \notin L$. Note that this is equivalent to replacing the set $L$ by its complement $[0,k-1]\setminus L$.
We say that a set system $\mc{F}$ {\em avoids intersections of size in $L$} if no two sets $F,F'\in\mc{F}$ satisfy $|F\cap F'| \in L$.

One way to study pairwise intersections in $\mc F$ is to build an auxiliary graph $G_\mc{F}$ whose vertices are the sets in $\mc F$ and two sets $F,F' \in \mc{F}$ are connected if and only if $|F\cap F'|\in L$. 
Then $\mc F$ being an $(n,k,L)$-system is equivalent to $G_{\mc{F}}$ being a clique; and $\mc F$ being a set system that avoids intersections of size in $L$ is equivalent to $G_{\mc{F}}$ being an independent set. Thus, we will say that sets $F_1,\dots,F_t$ form an {\em $L$-clique} of size $t$ if $|F_i\cap F_j| \in L$ for all $1 \le i < j \le t$.

One of the most well-known results in discrete mathematics is {\em Ramsey's theorem}, which asserts that in any graph, the clique number and independence number cannot both be small, see \cite{CFS15} for a survey of the rich history of this area. Therefore it is natural to study the following Ramsey-type questions  for $L$-cliques in set systems: \emph{if a set system $\mc{F} \subseteq \binom{[n]}{k}$ contains no $L$-clique of size $m$, how large a subfamily are we guaranteed to find that avoids intersections of size in $L$}?

Among the various ways to form $L$-cliques, one particularly interesting structure is given by the so-called {\em sunflowers} (also called {\em delta-systems}~\cite{DEF78}).
A sunflower (with $t$ \emph{petals}) is a collection of distinct sets $F_1,\dots,F_t$ where $F_i\cap F_j=\bigcap_{\ell=1}^t F_\ell$ for every $1 \le i < j \le t$.
Here, the set $\bigcap_{\ell=1}^t F_\ell$ is called the {\em kernel} of this sunflower.
The study of sunflowers has received a lot of attention in the last 60 years. 
Notably, the sunflower conjecture, proposed by Erd\H{o}s and Rado~\cite{ER60}, asks to determine the minimum number of $k$-element sets that guarantees the existence of a sunflower with $t$ petals, with a conjectured answer of $C^k$ for some $C$ depending only on $t$.
This conjecture has applications in combinatorial number theory, extremal graph theory and also computational complexity theory, see, e.g., \cite{rao2023sunflowers}.
Despite significant efforts, a solution to this conjecture remains elusive. 
A recent breakthrough in this direction was obtained by Alweiss, Lovett, Wu, and Zhang~\cite{ALWZ21}.
Their ideas have soon found wide applications not only within combinatorics but also in theoretical computer science and probability theory~\cite{fractionalKahnKalai,PP24KahnKalai,KZ24}.

While sunflowers might at first appear like natural but very specific instances of $L$-cliques, in some cases they are essentially equivalent notions. Indeed, Deza~\cite{Deza74} showed that for every singleton set $L$, any $L$-clique of at least $k^2-k+2$ sets must be a sunflower.

We say that a sunflower is an {\em $L$-sunflower} if the size of its kernel is in $L$.
Our question above regarding $L$-cliques has a natural counterpart with respect to $L$-sunflowers.
In this paper, we focus on the following two questions, where we think of $k$ as a small (fixed) number compared to $m$.
\begin{question}\label{question: no clique or sunflower}
    Let $k \ge 1$ and $\mc{F}$ be a set system of $k$-element sets.
    \begin{itemize}
        \item If $\mc{F}$ contains no $L$-clique of size $m$, how large a subfamily are we guaranteed to find that avoids intersections of size in $L$?
        \item If $\mc{F}$ contains no $L$-sunflower with $m$ petals, how large a subfamily are we guaranteed to find that avoids intersections of size in $L$?
    \end{itemize}
\end{question}

\noindent
We note that not having an $L$-clique of size $m$ is a stronger restriction than not having an $L$-sunflower with $m$ petals. Thus, a lower bound for the sunflower problem also implies the same lower bound for the clique question.

As mentioned above, we can define an auxiliary graph $G_\FF$ on vertex set $\FF$ by joining two vertices if their intersection has size in $L$. Then our assumptions correspond to bounding the clique number of this auxiliary graph, and we want to find a large independent set. Using the well-known bounds for off-diagonal Ramsey numbers \cite{erdos1935combinatorial}, an $N$-vertex graph with no clique of size $m$ has an independent set of size  $N^{1/m-o(1)}$.
Interestingly, the answers to both our questions turn out to be linear in $|\mc{F}|$, much better than the bound for off-diagonal Ramsey numbers.
In particular, the following theorem shows that if a set system $\mc{F} \subseteq \binom{[n]}{k}$ contains no $L$-sunflower with $m$ petals (in particular, if it contains no $L$-clique of size $m$), then there exists a subfamily of size $\Omega_k(m^{-k} |\mc{F}|)$ that avoids intersections of size in $L$. (Throughout this paper, we use $O_{k}(\cdot)$, $\Omega_{k}(\cdot)$ and $\Theta_{k}(\cdot)$ to hide multiplicative factors depending only on $k$.)

\begin{theorem}\label{theorem: forbidding sunflowers}
    Let $k \ge 1$ and $m \ge 2$ be integers, and let $L\subseteq [0,k-1]$. Suppose that a finite set system $\mc{F}$ of $k$-element sets contains no $L$-sunflower with $m$ petals (in particular, this holds if $\FF$ has no $L$-clique of size $m$).  Then, writing $a =\min\{i \ge 0: i \notin L\}$ and $b= \min\{i \ge a: i \in L \textnormal{ or } i = k\}$, there exists a subfamily $\FF'\subseteq \FF$ of size 
    $$
    |\FF'|=\Omega_k\!\left(m^{-k+b-a}\cdot |\mc{F}|\right)
    $$
    that avoids intersections of size in $L$.
    Moreover, this bound is tight for $L$-sunflowers for all $L \subseteq [0,k-1]$.
\end{theorem}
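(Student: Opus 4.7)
The plan is to proceed by induction on the uniformity $k$. The base case $k=1$ is immediate: either $L = \emptyset$ and $\FF' = \FF$ works, or $L = \{0\}$, in which case the hypothesis forbids $m$ pairwise disjoint singletons, so a pigeonhole on a maximal matching produces a subfamily of size $\Omega(|\FF|/m)$ through a common point. For the inductive step I would split according to whether $0 \in L$.

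\textbf{Case 1} ($a \ge 1$, i.e., $0 \in L$): The hypothesis now forbids $m$ pairwise disjoint sets in $\FF$. A maximum pairwise disjoint subfamily has size at most $m-1$ and covers at most $(m-1)k$ vertices; by maximality every $F \in \FF$ meets this union, so by pigeonhole some vertex $v$ lies in $\Omega_k(|\FF|/m)$ sets. Restricting to those sets and passing to $\tilde\FF := \{F\setminus\{v\} : v \in F \in \FF\}$ gives a $(k-1)$-uniform family. One routinely checks that $\tilde\FF$ contains no $m$-sunflower whose kernel size lies in $L' := (L-1)\cap[0,k-2]$ and that the parameters of $L'$ in $[0,k-2]$ are exactly $(a-1, b-1)$, so the gap $b-a$ is preserved. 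The inductive hypothesis applied to $\tilde\FF$ yields a subfamily of size $\Omega_{k-1}(m^{-(k-1)+(b-a)}|\tilde\FF|)$ avoiding $L'$; reattaching $v$ produces the target subfamily of $\FF$ of size $\Omega_k(m^{-k+(b-a)}|\FF|)$.

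\textbf{Case 2} ($a = 0$, i.e., $0 \notin L$): If $L \cap [0,k-1] = \emptyset$ then $\FF' = \FF$ works, so assume $b = \min L \in [1, k-1]$ with target $\Omega_k(|\FF|/m^{k-b})$. Here I would invoke the improved semilattice lemma developed later in the paper to obtain a subfamily $\FF^* \subseteq \FF$ of size $\Omega_k(|\FF|)$ equipped with kernels $Y_F \subsetneq F$ such that $F \cap F' = Y_F \cap Y_{F'}$ for distinct $F, F' \in \FF^*$ and the kernels are closed under intersection. A pigeonhole on $|Y_F|$ reduces to a common kernel size $s < k$, and the kernel family $\mathcal{Y} = \{Y_F\}$ inherits the no-$L$-sunflower property since any sunflower in $\mathcal{Y}$ lifts to one in $\FF^*$. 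Applying the inductive hypothesis at uniformity $s$ in a weighted form---where each kernel $Y \in \mathcal{Y}$ carries weight $n_Y := |\{F \in \FF^* : Y_F = Y\}|$ and the sunflower assumption forces $n_Y < m$ when $s \in L$---yields a subfamily of $\mathcal{Y}$ whose weighted size is $\Omega_s(m^{-s + b_s - a_s}) |\FF^*|$, where $(a_s, b_s)$ are the parameters of $L$ viewed in $[0, s-1]$. Lifting back to $\FF^*$ by including all sets whose kernel is in the chosen sub-family produces the desired subfamily of $\FF$. The verification that $-s + b_s - a_s \ge -k + (b-a)$ in every sub-case on $s$ (depending on whether $s < a$, $s \in [a, b-1]$, or $s \ge b$) is a short but crucial bookkeeping step.

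\textbf{Main obstacle:} Case 2 is the technical heart of the argument. The first difficulty is that Füredi's classical semilattice lemma carries a double-exponential cost in $k$, which would destroy the bound; this is precisely why the paper develops an improved single-exponential version. The second difficulty is that, to handle kernel sizes $s \notin L$ cleanly (where many sets may share a single kernel), the induction must be phrased in a weighted form, and the exponent $-k + b - a$ needs to be tracked carefully across every reduction, especially around kernel sizes straddling the gap $[a, b-1]$ in $L$.
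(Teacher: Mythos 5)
Your \textbf{Case 1} ($0\in L$) matches the paper's argument essentially exactly: pigeonhole on a maximal matching, restrict to a common vertex, delete it, and iterate/induct; the parameter bookkeeping ($a'=a-1$, $b'=b-1$) is the same. The base case and the degenerate case $a=k$ are handled analogously.

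\textbf{Case 2} ($0\notin L$, $a=0$) is where your proposal diverges from the paper and where the gaps lie. The paper does \emph{not} route through the semilattice lemma here at all: it applies the colour-certificates lemma (Lemma~2.1) \emph{with parameter $\ell=b$}, and this single application already gives a subfamily $\FF'$ of size $\Omega_k\!\left(m^{-(k-b)}|\FF|\right)$ with the property that for every $A$ of size in $[b,k-1]$ which is not the kernel of an $m$-petal sunflower there is a forced element $\varphi(A)$ contained in every $F\in\FF'$ with $A\subseteq F$; since $L\subseteq[b,k-1]$ when $a=0$ and no $L$-sunflower exists, this immediately forbids intersections of size in $L$. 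The whole point of the paper's Lemma~2.1 having a free parameter $\ell$ is that one only pays for intersection sizes at least $\ell$, and choosing $\ell=b$ is exactly what gives the exponent $-(k-b)$ rather than $-k$. Your proposal, by contrast, invokes the full weak delta-system lemma (Theorem~1.7), and this has concrete problems. First, applied with $m$ petals it already costs a factor $m^{-k}$ (so $|\FF^*|=\Omega_k(m^{-k}|\FF|)$, not $\Omega_k(|\FF|)$), which exceeds the entire budget when $b\ge1$; applied instead with $2$ petals it is affordable but then the members of $\II_F$ are merely pairwise intersections, not kernels of $m$-petal sunflowers, and your "$n_Y<m$ when $s\in L$" step no longer follows. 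Second, the object you want---a single "kernel" $Y_F\subsetneq F$ with $F\cap F'=Y_F\cap Y_{F'}$---is not what Theorem~1.7 supplies. One can try $Y_F:=\bigcup_{I\in\II_F}I$, and then $F\cap F'=Y_F\cap Y_{F'}$ does hold, but this $Y_F$ can equal all of $F$, so the uniformity need not drop and the induction does not close. Third, your lift-back step requires a weighted form of the inductive hypothesis; the paper never proves one (it is plausible that Lemma~2.1 admits a weighted variant, but that is additional work you would have to supply). In short, the key missing idea is the $\ell$-thresholded colour-certificate lemma; without it the exponent $-(k-b)$ is not reachable by the route you describe.
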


Note that the numbers $a$ and $b$ are defined so that $L$ is formed by
\(
\{0,1,\dots,a-1\}\cup\{b\}\) and arbitrary elements larger than $b$,
except if $L$ is of the form $\{0,\dots,a-1\}$, in which case $b$ is defined to be $k$.
The theorem above answers Question \ref{question: no clique or sunflower} for $L$-sunflowers completely (in terms of the dependency of $m$) and gives a nontrivial lower bound for all $L$-cliques.
In general, the question for $L$-cliques can be more delicate and we will discuss it in \cref{sec:intro_modular}.
Since the size of $\mc{F}'$ depends linearly on $|\mc{F}|$, we define the following notation.
\begin{definition}
    Let $k \ge 1, m \ge 2$ be integers, and let $L\subseteq [0,k-1]$.
    Define $r(k,L,m)$ (respectively, $r_{\text{sf}}(k,L,m)$) to be the supremum over all $\alpha\ge 0$ for which whenever a finite set system $\mc{F}$ consisting of $k$-element sets contains no $L$-clique of size $m$ (respectively, no $L$-sunflower with $m$ petals), then there exists a subfamily $\mc{F}'\subseteq \mc{F}$ with $|\mc{F}'|\ge \alpha |\mc{F}|$ that avoids intersections of size in $L$.
\end{definition}
In this language, \cref{theorem: forbidding sunflowers} tells that $$r(k,L,m) \ge r_{\textnormal{sf}}(k,L,m) = \Theta_k(m^{-k+b-a}).$$

\cref{theorem: forbidding sunflowers} gives an interesting connection between two famous problems: the Erd\H{o}s--S\'os problem and the Duke--Erd\H{o}s problem.
The Erd\H{o}s--S\'os problem~\cite{E75} asks for the largest size of a family $\mc{F} \subseteq \binom{[n]}{k}$ that avoids intersection size $\ell$, and the Duke--Erd\H{o}s problem~\cite{DE77} asks for the largest size of a family $\mc{F} \subseteq \binom{[n]}{k}$ with no sunflower with $m$ petals whose kernel has size $\ell$.
Note that the latter problem is a generalisation of the former by taking $m=2$. We will show that~\cref{theorem: forbidding sunflowers} enables us to give a connection for general $m$.
Both problems have received a lot of attention.
Among them, the first significant progress on the Erd\H{o}s--S\'os problem was made by Frankl and F\"uredi~\cite{FF85} in 1985, who determined the correct order of magnitude $\Theta_k\big(n^{\max(\ell,k-\ell-1)}\big)$; see \cite{FF85,EKL24,KZ24} for more refined results.
Recently, Brada{\v{c}}, Buci{\'{c}}, and Sudakov~\cite{BBS23} managed to show that for the Duke--Erd\H{o}s problem the answer is $\Theta_k\big(n^{\max(\ell,k-\ell-1)}m^{\min(k-\ell,\ell+1)}\big)$; see also \cite{KN24} for more refined results when $k \ge 2\ell+3$.

Using \cref{theorem: forbidding sunflowers} for $L=\{\ell\}$, it is easy to see that when $\ell \in [k-1]$, then the answer for the $m$-petal Duke--Erdős problem is at most a factor of $O_k(m^{k-\ell})$ times bigger than the answer for the Erdős--Sós problem.
Indeed, if $\mc{F}\subseteq \binom{[n]}{k}$ contains no sunflower with $m$ petals and kernel size $\ell$, then by \cref{theorem: forbidding sunflowers}, there is a subfamily $\mc{F}'\subseteq \mc{F}$ with $|\mc{F}'|=\Omega_k(m^{-k+\ell} |\mc{F}|)$ that avoids intersections of size $\ell$.
In particular, the result of Frankl and F\"uredi~\cite{FF85} mentioned above implies  an upper bound of $O_k(n^{\max(\ell,k-\ell-1)} m^{k-\ell})$ for the $m$-petal problem if $\ell \ge 1$.
We note that in the proof Brada{\v{c}}, Buci{\'{c}}, and Sudakov~\cite{BBS23} of the general bound $\Theta_k(n^{\max(\ell,k-\ell-1)}m^{\min(k-\ell,\ell+1)})$, the ``balanced'' case $\ell=(k-1)/2$ plays an important role, since all other cases can be reduced to the balanced case.
Notably, the above connection recovers their result when $\ell \ge (k-1)/2$, which includes the ``balanced'' case.\smallskip

To obtain our lower bound in~\cref{theorem: forbidding sunflowers}, we use a technique that we call \emph{colour certificates}. 
To demonstrate how colourings can be useful, consider a family $\mc{F} \subseteq \binom{[n]}{k}$. We want to find a large subfamily $\mc{F}'\subseteq \mc{F}$ with the property that for all $A$ with $|A|\in L$ we have some special element $\varphi(A)\in [n]\setminus A$ such that for all $F\in\FF'$,
\begin{equation}\label{equation: special element}
   \textnormal{if $A\subseteq F$ then $\varphi(A)\in F$.}
\end{equation}
Clearly, this implies that no two sets in $\FF'$ can have intersection precisely $A$.
The idea of constructing such a $\varphi$ has already been used by F\"uredi~\cite{F83}. Notice that since $A$ is not the kernel of any $m$-petal sunflower, $\{F\setminus A:F\in\FF,A\subseteq F\}$ has a vertex cover of size at most $mk$, which we denote as $\psi(A)$.
We want to choose the special elements $\varphi(A)$ from $\psi(A)$ in such a way that there are many sets $F$ satisfying \eqref{equation: special element} for all $A\subseteq F$ of size in $L$. To achieve this, we introduce a colouring of the ground set $[n]$. If the number of colours $M$ that we use is small, yet every $\psi(A)$ is rainbow, then we are able to encode all the information about $\psi(A)$ by the colours and reduce our problem to a simple design problem on the subsets $\binom{[M]}{k}$. Then, we can take $\FF'$ to be the collection of all sets whose vertex colours form a $k$-set belonging to the chosen design. See \cref{section: colour certificate} for more details.

\subsection{Forbidding \texorpdfstring{$L$}{L}-cliques in the modular setting}\label{sec:intro_modular}

For the problem of forbidding $L$-cliques in general, it seems hard to provide a complete characterization similar to \cref{theorem: forbidding sunflowers}.
An important and well-studied problem for set systems with restricted intersections is when the allowed intersection sizes (i.e., the elements of $L$) correspond to some residues modulo a prime $p$. 
As mentioned before, a celebrated theorem of Frankl and Wilson~\cite{FW81} states that if $p$ is a prime and $L\subseteq \zpz$ with $k\textrm{ mod }p\not \in L$ and $|L|=s\leq k$, then whenever $\FF\subseteq \binom{[n]}{k}$ is such that every distinct pair has intersection size in $L$ modulo $p$, then $|\FF|\leq \binom{n}{s}$. 

It is therefore natural to study whether our general lower bound $r(k,L,m)=\Omega_k(m^{-k})$ can be similarly improved when $L$ satisfies some modularity conditions. In this direction, we obtain the following result. With a slight abuse of notation, when $L\subseteq \zpz$ for some prime $p$, we will write $r(k,L,m)$ to mean $r(k,L',m)$ for $L'=\{\ell\in [0,k-1]:\ell\!\!\mod p\in L\}$. 

\begin{theorem}\label{theorem: modular clique}
    Let $p$ be a prime, and let $k \ge p$ and $m \ge 2$ be integers.
    Let $L\subseteq \zpz$ have size $p-1$.
    Then
        $r(k,L,m)=\Omega_k\left(m^{-\lceil k/p\rceil-1}\right)$ and $r(k,L,m)=O_k\left(m^{-\floor{k/p}+1}\right).
    $
    Moreover, when $L=(\zpz)\setminus\{0\}$ and $k$ is a multiple of $p$, we have $$
        r(k,L,m) = \Theta_k\left(m^{-k/p}\right).
    $$
\end{theorem}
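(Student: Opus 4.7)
The plan is to prove the lower and upper bounds separately, with the tighter $\Theta$-bound in the special case emerging from sharper versions of both.

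For the lower bound $r(k,L,m) = \Omega_k(m^{-\lceil k/p\rceil - 1})$, I would adapt the colour-certificate technique sketched after Theorem~\ref{theorem: forbidding sunflowers}. A naive application of Theorem~\ref{theorem: forbidding sunflowers} to $L \cap [0,k-1]$ only gives $\Omega_k(m^{-k+b-a})$, which is much too weak---for $L = (\zpz) \setminus \{0\}$ it produces $\Omega_k(m^{-k+1})$ instead of the desired $\Omega_k(m^{-k/p})$. The improvement should come from the fact that $[0,k-1] \setminus L$ forms a single residue class modulo $p$. Partition $[0,k-1]$ into $\lceil k/p\rceil$ bands of length at most $p$, each containing exactly one ``allowed'' intersection size (the one $\equiv r^* \pmod p$, where $\{r^*\} = (\zpz) \setminus L$) and up to $p-1$ ``forbidden'' sizes. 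The key step is to design a modular colour-certificate that eliminates all forbidden sizes in a band using a single witness, so that the cost is one factor of $m$ per band, plus an additional $m$ from a final greedy reduction.

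For the upper bound $r(k,L,m) = O_k(m^{-\lfloor k/p\rfloor + 1})$, I would give an explicit construction. Let the ground set be partitioned into $N$ disjoint blocks of size $p$, together with a small ``shift'' set $W$. A set in $\mc{F}$ is the union of some $\lfloor k/p \rfloor$ chosen blocks and a $(k \bmod p)$-element subset of $W$ drawn from one of $m-1$ shift patterns. Sets sharing a shift intersect in a multiple of $p$, so every subfamily avoiding $L$-intersections must fix a single shift; the block choices then cost a factor $m^{\lfloor k/p \rfloor - 1}$. Meanwhile, sets with distinct shifts intersect in a size $\not\equiv r^* \pmod p$, so no $m$-clique can form across shifts. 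In the special case $L = (\zpz) \setminus \{0\}$ with $p \mid k$, the construction tightens because $k \bmod p = 0$ eliminates $W$ entirely, producing the exact rate $m^{-k/p}$; correspondingly, the lower bound gains a factor of $m$ because $r^* = 0$ makes the band-by-band structure collapse into a single $\mathbb{F}_p$-linear condition, allowing a single random $\mathbb{F}_p$-valued colouring to serve as the entire modular certificate.

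The main obstacle is establishing the ``single witness per band'' claim in the lower bound. The colour certificate behind Theorem~\ref{theorem: forbidding sunflowers} places one independent witness per forbidden intersection size; reorganising this so that the modularity of $L$ is exploited---i.e., so that one randomly chosen witness simultaneously handles $p-1$ forbidden sizes in a band---requires a careful redesign of the underlying probabilistic colouring. Achieving the matching exponent in the special case will further require combining this redesign with a polynomial-method argument in the spirit of Frankl--Wilson, in order to squeeze out the final factor of $m$ that separates the generic lower bound from the sharper $\Theta_k(m^{-k/p})$ rate.
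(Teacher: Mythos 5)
There is a substantial gap in your lower-bound approach. The colour-certificate technique of \cref{lemma: main} fundamentally uses only the \emph{sunflower-free} hypothesis: for each candidate kernel $A$, the family $\mc{F}(A)$ has a small vertex cover precisely because $A$ is not the kernel of a large sunflower. It does not see, and cannot exploit, the much stronger \emph{clique-free} hypothesis. You acknowledge that a naive application yields only $\Omega_k(m^{-k+1})$ and propose a ``modular colour-certificate'' with one witness per band of $p$ consecutive intersection sizes, but you do not actually describe a mechanism by which a single randomly-chosen witness could certify $p-1$ forbidden sizes simultaneously---this is exactly the hard part, and I do not see how the probabilistic colouring framework accommodates it. The paper instead proves the lower bound via a completely different device (\cref{thm:modularstructure}): a recursive argument on the ground set that, using the clique constraint directly, locates a block $X_0$ of size $p$ on which a constant fraction of the weight is either disjoint from $X_0$ or fully contains $X_0$, and then deletes $X_0$ and recurses. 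The output is a subfamily with an ``atomic'' block structure, which gives the $m^{-\lceil k/p\rceil-1}$ rate for general $a$ and $m^{-k/p}$ when $a=0$ and $p\mid k$. Crucially, this argument works for arbitrary moduli $d$, not just primes---which would not be possible with the Frankl--Wilson-style polynomial argument you suggest for squeezing out the last factor of $m$ in the tight case, so that suggestion is a dead end.

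Your upper-bound approach is also quite different from the paper's and more involved than necessary. The paper simply takes $\mc F=\binom{[n]}{k}$ (with dummy elements when $a\neq k\bmod p$): the Frankl--Wilson theorem shows $\mc F$ has no $L$-clique of size $m=\binom{n}{p-1}+1$, while the Ray-Chaudhuri--Wilson theorem bounds any subfamily avoiding intersections in $L$ by $\binom{n}{\lfloor k/p\rfloor}$, and the two together give the claimed exponent. Your explicit block-and-shift construction is not clearly correct: if two sets use different shifts but share some blocks, the intersection size is a multiple of $p$ plus a $W$-contribution, and it is not clear this always misses $r^*$ mod $p$; moreover, in the special case $p\mid k$ you say $W$ vanishes, at which point there are no shifts left and it is unclear what prevents a large $L$-clique. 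I would recommend abandoning the bespoke construction and using $\binom{[n]}{k}$ with the two classical intersection theorems.
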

Surprisingly, as we shall see in the proof, the lower bounds above do not require $p$ to be a prime.
In addition, the residue missing from $L$ can be any residue, in contrast with the setting of the Frankl--Wilson theorem, where $k$ needs to have a different residue modulo $p$ from all $\ell \in L$.
This is because in our problem (Question~\ref{question: no clique or sunflower}), one can add some fixed number of new ``dummy'' elements to each set in our set system to increase the uniformity (hence changing the value of $k$ mod $p$) while keeping the intersections, whereas this operation is not possible in the setting of the Frankl--Wilson theorem since the cardinality of the underlying universe is fixed to be $n$. Furthermore, \cref{theorem: modular clique} demonstrates a difference in behaviour between the sunflower problem (resolved by \cref{theorem: forbidding sunflowers}) and the clique question, which might be surprising in light of Deza's result stating that any large $\{\ell\}$-clique must be a sunflower.

Although \cref{theorem: modular clique} is a very natural case of the modular problem, where $L$ corresponds to all but one possible residues, 
this result was also motivated by the problem that appeared in the context of quantum computing, and has some interesting consequences there.
We will discuss this in \cref{sec:intro_fermionic}.

The next question is to consider the case where $L$ corresponds to fewer than $p-1$ residues, for example, only one residue. We are not able to fully resolve this case, but we have some good upper bounds. For example, in \cref{sec: modular things}, we show that if $L$ consists of a singleton element in $\zpz$, then $r(k,L,m)=O_k\big(m^{-(p-1)k/p+p^2} \big)$ (see~\cref{theorem: modular clique upper bound}). This suggests that when $L$ corresponds to precisely one residue, the exponent of $m$ might be $\frac{(p-1)k}{p}$ plus some additive term depending only on $p$.
More generally, almost the same argument works to provide upper bounds $O_k\big(m^{-\frac{(p-1)k}{ps}+p^2} \big)$ if $L\subseteq \zpz$ consists of $s$ residues.

\subsection{An efficient delta-system lemma}\label{sec: intro furedi}
The methods we developed to prove \cref{theorem: forbidding sunflowers} can be used to prove a variant of Füredi’s celebrated semilattice lemma~\cite{F83}, which has significantly better, single-exponential dependence on the parameters. 
From now on, for simplicity, we refer to Füredi's result~\cite{F83} as \emph{F\"uredi's delta-system lemma}.

F\"uredi's delta-system lemma is a fundamental ingredient in the powerful delta-system method, see, e.g.,~\cite{MV16,Furedi91}.
The idea behind the delta-system method is to make use of sunflowers to find certain structures in complicated set systems. For example, a very useful property of sunflowers is given by the observation that if $m>k$, the $k$-element sets $A_1,\dots,A_m$ form a sunflower with kernel $K$, and $B$ is any other $k$-element set, then the intersection $B\cap K$ is realised by the intersection with some petal: $B\cap K=B\cap A_i$. Similarly, if we have two large sunflowers, then their kernels' intersection is realised as the intersection of two petals. These observations enable us to find sets in our family satisfying certain intersection properties by studying sunflowers and their kernels.
This approach was initiated by Deza, Erd\H os, and Frankl~\cite{DEF78} and Frankl~\cite{F78}.
Later, F\"uredi~\cite{F83} proved his important structural result mentioned above, which turned out to be a very powerful tool in extremal combinatorics, for example, in the study of the Erd\H{o}s--S\'os problem~\cite{FF85} and in Chv\'atal's problem on simplices~\cite{FF87}.
See also \cite[Section 4.3]{MV16} for applications in Tur\'an problems for expansions of trees. Very recently, Jiang and Longbrake~\cite{jiang2024number} developed a supersaturation variant of the delta-system method, and used it to count the number of $H$-free hypergraphs for a large family of degenerate $k$-graphs $H$.

In order to describe the statement of F\"uredi's delta-system lemma, we need some additional notation.
A set system $\mc{F}\subseteq \binom{[n]}{k}$ is called {\em $k$-partite} if there exists a partition $[n]=V_1\cup\dots\cup V_k$ such that every $F \in \mc{F}$ intersects every part in precisely one element.
Let $\pi:[n]\to[k]$ be the projection such that $a\in V_{\pi(a)}$ for all $a\in [n]$, and we write $\pi(A)=\{\pi(a): a\in A\}$ when $A\subseteq [n]$.
For every $F \in \mc{F}$, define $I(F,\mc{F}) := \{F \cap F': F' \in \mc{F}\setminus \{F\}\}$.
With slight abuse of notation, $\pi(I(F,\mc{F}))$ is the projected set system $\{\pi(J):J\in I(F,\mc{F})\}$ on $[k]$.
In addition, we say a set system $\mc{I}$ is {\em intersection-closed} if $I\cap I' \in \mc{I}$ for all $I,I' \in \mc{I}$.

\begin{theorem}[F\"uredi's delta-system lemma~\cite{F83}]\label{lemma: furedi}
    Suppose $\mc{F}\subseteq \binom{[n]}{k}$ and let $m \ge 2$. 
    Then, writing $\alpha_{k,m}=(2^k km)^{-2^k}$, there exists a $k$-partite subfamily $\mc{F}'\subseteq \mc{F}$ with $|\mc{F}'| \ge \alpha_{k,m}|\mc{F}|$ such that 
    \begin{enumerate}[(1)]
        \item for every $F_1, F_2 \in \mc{F}'$ distinct, there is a sunflower in $\mc{F}'$ with $m$ petals whose kernel is $F_1\cap F_2$;\label{item:Furedimain}
        \item for every $F \in \mc{F}'$, $I(F,\mc{F}')$ is intersection-closed (if $m \ge k + 1$);\label{item:Furediintersection-closed}
        \item there exists a set system $\mc{I} \subseteq 2^{[k]}\setminus \{[k]\}$ such that $\pi(I(F,\mc{F}'))=\mc{I}$ for every $F \in \mc{F}'$.\label{item:Furedisame}
    \end{enumerate}
\end{theorem}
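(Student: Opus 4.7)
The plan is to follow the classical delta-system strategy: first reduce to the $k$-partite case by a random partition, then iteratively refine the intersection structure one subset of $[k]$ at a time. For the reduction, take a uniformly random map $\chi:[n]\to[k]$ and keep the \emph{rainbow} sets (those with $|\chi(F)|=k$). A fixed set is rainbow with probability $k!/k^k$, so averaging yields a $k$-partite sub-family on some partition $V_1,\dots,V_k$ with projection $\pi$, of size at least $(k!/k^k)|\mathcal{F}|$. This loss is negligible against the target $\alpha_{k,m}=(2^k km)^{-2^k}$. The value of $k$-partiteness is that pairwise intersections are determined by their \emph{types}: $|F\cap F'|=|\pi(F\cap F')|$ and $F\cap F'=F\cap\pi^{-1}(\pi(F\cap F'))$.

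For the main step, I would maintain a current sub-family $\mathcal{G}$ and a set $\mathcal{I}\subseteq 2^{[k]}\setminus\{[k]\}$, processing the proper subsets $S\subsetneq[k]$ in decreasing order of $|S|$ via the following dichotomy. Writing $T_F(S):=\{F'\in\mathcal{G}\setminus\{F\}:\pi(F\cap F')=S\}$, call $F$ \emph{$S$-rich} if the petal family $\{F'\setminus\pi^{-1}(S):F'\in T_F(S)\}$ contains $m-1$ pairwise disjoint members, so that $F$ is the center of an $m$-petal sunflower in $\mathcal{G}$ with kernel $F\cap\pi^{-1}(S)$. If at least half of $F\in\mathcal{G}$ are $S$-rich, add $S$ to $\mathcal{I}$ and restrict $\mathcal{G}$ to $S$-rich sets; otherwise delete every $F'$ that is an $S$-partner of any remaining $F$. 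The greedy matching/cover inequality—for a family of sets of size $\leq k$, either $m$ pairwise disjoint ones exist or some cover of size $\leq k(m-1)$ meets them all—controls the ``otherwise'' branch at a multiplicative cost of $(km)^{O(1)}$. Compounding the per-step losses over the $2^k-1$ subsets gives the claimed $\alpha_{k,m}$ after bookkeeping.

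The main obstacle is that a sunflower certified at step $S$ can be damaged by later deletions for some smaller $S'$, since a petal may have unrelated $S'$-type partners outside the sunflower. I would address this by processing subsets in decreasing order of $|S|$ and \emph{overshooting} at each step: insist on sunflowers with $C(k)m$ petals rather than $m$, so that subsequent deletions shrink any given sunflower by only a bounded fraction and still leave $\ge m$ petals at the end. Once this is in hand, property (3) is immediate from the construction (since $S\in\mathcal{I}$ iff every surviving $F$ still has an $S$-partner), and (1) follows from the overshoot-and-prune construction. Property (2) for $m\ge k+1$ then follows by the standard trick: given $A=F\cap F_A,\,B=F\cap F_B\in I(F,\mathcal{F}')$, apply (1) to $F_A,F_B$ to obtain an $m$-petal sunflower with kernel $F_A\cap F_B$; its petal parts are pairwise disjoint, so at most $k$ of them can meet $F\setminus(A\cap B)$, and for $m\ge k+1$ some petal $S_i$ satisfies $S_i\cap F=F_A\cap F_B\cap F=A\cap B$, witnessing $A\cap B\in I(F,\mathcal{F}')$.
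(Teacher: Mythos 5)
The paper does not actually prove \cref{lemma: furedi}; it is cited as a known result from the literature, and what Section~4 contains is a lower bound showing the double-exponential dependence on $k$ is necessary (\cref{thm:doubleexponential}) and a weaker variant with single-exponential constants (\cref{lemma: weak furedi}). So there is no internal proof to compare against, and I can only assess the proposal on its own terms.

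As written, the proposal has a real gap in the ``otherwise'' branch. You invoke the matching/cover inequality to claim a multiplicative loss of $(km)^{O(1)}$, but that inequality only constrains the \emph{structure} of the $S$-partners, not their \emph{number}: if $F$ is not $S$-rich, then the petal family of $T_F(S)$ has a vertex cover $C$ of size at most $k(m-1)$, which says that every $S$-partner of $F$ meets $C$ --- but arbitrarily many sets can pass through a single vertex of $C$. So ``delete every $F'$ that is an $S$-partner of any remaining $F$'' can wipe out nearly the whole family, and the claimed $(km)^{O(1)}$ cost is unjustified. (The correct handling of the poor branch needs a degree-type recursion that exploits the fact that, because $S$ is processed in decreasing order of size, the classes of the strictly larger projections $S\cup\{\pi(v)\}$ for $v\in C$ have already been controlled.) Relatedly, the ``overshoot'' device is under-specified: the overshoot factor for the subset $S$ must dominate the total pruning done at all subsequently processed subsets, but that pruning in turn depends on the overshoots used there, so the thresholds compound across the $2^k$ steps --- this bookkeeping is precisely where the $2^k$ in the exponent of $\alpha_{k,m}$ arises, and it is not carried out. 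Property~(3) being ``immediate from the construction'' also quietly presupposes that enough $S$-partners survive all later prunings, which is exactly what the (unfinished) overshoot argument is meant to secure. The $k$-partite reduction and the derivation of property~(2) from property~(1) for $m\ge k+1$ are both standard and correct.
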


Note that this result immediately gives some very weak but positive lower bound for the numbers $r_{\text{sf}}(k,L,m)$ (and hence also $r(k,L,m)$) introduced in the previous section. Indeed, if we apply F\"uredi's delta-system lemma for a family $\mc{F}$ of $k$-element sets that contains no $L$-sunflowers with $m$ petals, we acquire a subfamily $\mc{F}'$ such that $|\mc{F}'|=\Omega_k\big(m^{-2^k}|\mc{F}|\big)$ and for any distinct $F_1,F_2\in \mc{F}'$, $F_1\cap F_2$ is the kernel of a sunflower in $\mc{F}'$ with $m$ petals.
This means that any distinct $F_1,F_2 \in \mc{F}'$ must have $|F_1\cap F_2|\notin L$, i.e., $\mc{F}'$ avoids intersections of size in $L$.
This immediately proves $r_{\text{sf}}(k,L,m) =\Omega_k\big(m^{-2^k}\,\big)$, a significantly weaker statement than \cref{theorem: forbidding sunflowers}.
Indeed, this was our starting point to look for a better dependency in F\"uredi's delta-system lemma.

Although the delta-system lemma has been extensively used over the years, it seems that there have not been any improvements since F\"uredi's original paper~\cite{F83} on the best possible value of $\alpha_{k,m}$ for which the conclusion of this statement holds. 
In his paper F\"uredi comments that the proof gives an extremely small value of $\alpha_{k,m}$ and the only upper bound he has is $(m-1)^{-k}$. Using algebraic tools, we give a construction that, rather surprisingly, shows that the double-exponential dependence on $k$ is in fact necessary for the full strength of the delta-system lemma.

\begin{theorem}\label{thm:doubleexponential}
    In \cref{lemma: furedi}, we must take $\alpha_{k,m}\leq k!\cdot2^{-\binom{k}{\ceil{k/2}}}$ for all $m\ge 2$ and $k \ge 3$.
\end{theorem}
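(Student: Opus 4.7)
The plan is to construct, for every $k \geq 3$, an explicit $k$-uniform family $\mathcal{F}$ demonstrating the claimed upper bound on $\alpha_{k,m}$. It suffices to treat $m = 2$, since then conclusion (1) of \cref{lemma: furedi} is automatic (any two sets form a $2$-petal sunflower) and (2) is vacuous (it applies only when $m \geq k+1$); for larger $m$ the same construction works provided an auxiliary parameter $N$ is taken large enough.

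Write $M := \binom{k}{\lceil k/2\rceil}$. I take $\mathcal{F}$ to be a disjoint union $\bigsqcup_{\mathcal{S}} \mathcal{F}_{\mathcal{S}}$, where $\mathcal{S}$ ranges over all subsets of the Sperner-maximum antichain $\binom{[k]}{\lceil k/2\rceil}$, giving $2^M$ blocks on pairwise disjoint vertex sets. For each $\mathcal{S}$ let $\mathcal{I}_{\mathcal{S}}$ denote the intersection closure of $\mathcal{S}$ inside $2^{[k]}$: because $\mathcal{S}$ is itself an antichain of middle-sized sets, its maximal elements in $\mathcal{I}_{\mathcal{S}}$ are exactly $\mathcal{S}$, so $\mathcal{S} \mapsto \mathcal{I}_{\mathcal{S}}$ is injective and yields $2^M$ distinct intersection-closed families. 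I realise each $\mathcal{I}_{\mathcal{S}}$ as the projected intersection profile of a $k$-partite block through a parity-check-style parametrisation: setting $\mathcal{K}_{\mathcal{S}} := \{[k] \setminus S : S \in \mathcal{S}\}$ and $V_i^{(\mathcal{S})} := \prod_{K \in \mathcal{K}_{\mathcal{S}},\, K \ni i} [N]$, the members of $\mathcal{F}_{\mathcal{S}}$ are indexed by functions $\phi: \mathcal{K}_{\mathcal{S}} \to [N]$, with $F_\phi$ selecting from $V_i^{(\mathcal{S})}$ the vertex $(\phi(K))_{K \ni i}$. A direct check gives
\[
\pi(F_\phi \cap F_{\phi'}) = [k] \setminus \bigcup_{K \in D} K, \qquad D := \{K \in \mathcal{K}_{\mathcal{S}} : \phi(K) \neq \phi'(K)\},
\]
so as $\phi' \neq \phi$ varies, $D$ ranges over all non-empty subsets of $\mathcal{K}_{\mathcal{S}}$ and these projected intersections trace out exactly $\mathcal{I}_{\mathcal{S}}$. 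Finally, I equalise the blocks to have common size $|\mathcal{F}|/2^M$ by replicating the smaller ones on fresh disjoint vertex sets.

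Now suppose $\mathcal{F}' \subseteq \mathcal{F}$ satisfies all three conclusions of \cref{lemma: furedi} under some partition $\pi$. Because different blocks occupy disjoint vertex sets, $\pi$ restricts to a permutation $\sigma_{\mathcal{S}} \in S_k$ of the canonical labels on each block meeting $\mathcal{F}'$. Condition (3) then forces the relabelled profiles $\sigma_{\mathcal{S}}(\mathcal{I}_{\mathcal{S}})$ to coincide across all such $\mathcal{S}$ (possibly augmented by $\emptyset$ from empty cross-block intersections when more than one block is present). Since the $S_k$-orbit of any fixed $\mathcal{I}_{\mathcal{S}}$ has size at most $k!$, at most $k!$ blocks can contribute to $\mathcal{F}'$, giving $|\mathcal{F}'| \leq k! \cdot |\mathcal{F}|/2^M$, which is the claimed bound.

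The main technical work lies in verifying that the construction behaves as advertised: establishing $\pi(I(F, \mathcal{F}_{\mathcal{S}})) = \mathcal{I}_{\mathcal{S}}$ with equality (neither missing elements nor acquiring spurious ones), handling the $\emptyset$-contribution from cross-block pairs cleanly in the profile-matching step, and for $m \geq 3$ ensuring that every kernel in $\mathcal{I}_{\mathcal{S}}$ is realised by a genuine $m$-petal sunflower in $\mathcal{F}_{\mathcal{S}}$ (achievable by taking $N \geq m$, since each $K_0 \in \mathcal{K}_{\mathcal{S}}$ disjoint from the kernel admits $m$ distinct coordinate values). Once these details are pinned down, the orbit counting above is essentially immediate.
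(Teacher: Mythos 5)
Your construction is the same in spirit as the paper's: one disjoint block for each subfamily $\mathcal{S}$ of the middle layer, designed so that the intersection profile of that block (under the canonical $k$-partition) is the intersection closure of $\mathcal{S}$. The realization is different, though --- you use a clean combinatorial ``parity-check'' parametrisation by functions $\phi:\mathcal{K}_{\mathcal{S}}\to[N]$, whereas the paper builds each block from tuples of vectors in $\mathbb{F}_p^{\ell d}$ with carefully prescribed linear-dependency patterns (\cref{lemma: construction for each intersection-closed family}, \cref{lem:lemmafordoubleexponential}). Your version avoids the algebra, but pays a price: the blocks have unequal sizes $N^{|\mathcal{S}|}$, so you need the equalisation-by-replication step (the paper gets equal sizes $s$ for free), and a ``block'' then itself decomposes into many disjoint copies with only empty cross-copy intersections, which you would need to track through the final counting.

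The real gap is in the last paragraph. You write that condition~\ref{item:Furedisame} ``forces the relabelled profiles $\sigma_{\mathcal{S}}(\mathcal{I}_{\mathcal{S}})$ to coincide across all such $\mathcal{S}$,'' and then deduce from injectivity of $\mathcal{S}\mapsto\mathcal{I}_{\mathcal{S}}$ that at most $k!$ blocks contribute. But condition~\ref{item:Furedisame} only says that $\pi(I(F,\mathcal{F}'))$ is the same set $\mathcal{I}^\star$ for every $F\in\mathcal{F}'$; it does \emph{not} say that this set equals the full block profile $\mathcal{I}_{\mathcal{S}}$. If $\mathcal{F}'$ keeps only a small subfamily of some block, the realised profile $\pi(I(F,\mathcal{F}'\cap\mathcal{F}_{\mathcal{S}}))$ can be a strict subset of $\mathcal{I}_{\mathcal{S}}$, so all you can conclude is $\mathcal{I}^\star\setminus\{\emptyset\}\subseteq\sigma_{\mathcal{S}}(\mathcal{I}_{\mathcal{S}})$. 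In particular if $\mathcal{I}^\star\cap\binom{[k]}{\lceil k/2\rceil}$ is small (or empty), essentially \emph{every} block can contribute, and your ``at most $k!$ blocks'' conclusion fails. This is precisely why the paper's Lemma~\ref{lemma: construction for each intersection-closed family}~(\ref{item: property decomposition sunflowers}) is built in: for any $I\in\mathcal{I}\setminus\mathcal{I}^\star$ with $|I|=\lceil k/2\rceil$, each block can be partitioned into $q$-petal sunflowers with kernel projecting to $I$, and since two petals of such a sunflower would realise the forbidden $I$, at most one petal per sunflower survives into $\mathcal{F}'$, giving $|\mathcal{F}''_{\mathcal{I}}|\le s/q$. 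The paper then gets $\alpha_{k,m}<k!\,2^{-M}+k!/q$ and lets $q\to\infty$. Your construction actually does support such a sunflower partition (fix $K_0\in\mathcal{K}_{\mathcal{S}}$, and fix $\phi$ on $\mathcal{K}_{\mathcal{S}}\setminus\{K_0\}$; the $N$ choices of $\phi(K_0)$ give a sunflower with kernel projecting to $[k]\setminus K_0$), so your lower bound could be repaired by incorporating this robustness argument with $N\to\infty$ playing the role of $q$. As written, however, the claim that the profiles must coincide is unjustified and the proof does not go through.
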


\cref{thm:doubleexponential} shows that in its classical, strong form, one cannot hope for efficient bounds on the constant in Füredi's delta-system lemma. Thus, it is natural to look for variants of Füredi's delta-system lemma for which better (say, single-exponential) bounds on the constant $\alpha_{k,m}$ hold, but the conclusion is still strong enough for applications. Our methods give the following statement, which satisfies these goals for many applications, as discussed below.
\begin{theorem}\label{lemma: weak furedi}
    Suppose $\mc{F}\subseteq \binom{[n]}{k}$ and let $m \ge 2$.
    Then, writing $\beta_{k,m}=(25\cdot2^k km)^{-k}$, there exists a subfamily $\mc{F}'\subseteq \mc{F}$ with $|\mc{F}'| \ge \beta_{k,m}|\mc{F}|$ such that 
    \begin{enumerate}[(a)]
        \item for every $F_1,F_2 \in \mc{F}'$ distinct, there is a sunflower in $\mc{F}$ with $m$ petals whose kernel is $F_1\cap F_2$;\label{item:weakfuredimain}

        \item in fact, for every $F \in \mc{F}'$, there exists an intersection-closed set system $\mathcal{I}_F$ such that $I(F,\mc{F}')\subseteq \mathcal{I}_F\subseteq 2^F \setminus \{F\}$ and every element of $\mathcal{I}_F$ is the kernel of a sunflower with $m$ petals in $\FF$.\label{item:weakfurediintersectionclosed}
    \end{enumerate}
\end{theorem}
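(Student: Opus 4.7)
I would base the proof on the colour-certificates method used for \cref{theorem: forbidding sunflowers}, now applied uniformly across all sizes of bad subsets at once. For each $A\subseteq[n]$ with $|A|\le k-1$, call $A$ \emph{good} if it is the kernel of an $m$-petal sunflower in $\mc{F}$ and \emph{bad} otherwise. For a bad $A$, the link family $\{F\setminus A: A\subseteq F\in\mc{F}\}$ contains no $m$ pairwise-disjoint $(k-|A|)$-sets, so a greedy argument produces a transversal $\psi(A)\subseteq[n]\setminus A$ of size at most $(m-1)(k-|A|)<km$. The plan is to pick an element $\varphi(A)\in\psi(A)$ for every bad $A$ and define
\[
\mc{F}' := \{F\in\mc{F}: \varphi(A)\in F \text{ for every bad } A\subseteq F\}.
\]

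This single rule delivers \emph{both} conclusions of the theorem immediately. Indeed, for any distinct $F, F_1, \dots, F_r \in \mc{F}'$, the set $B := F\cap F_1\cap\cdots\cap F_r$ cannot be bad: otherwise $B\subseteq F_j$ for every $j$, so $\varphi(B)\in F_j$ for every $j$, whence $\varphi(B)\in F\cap F_1\cap\cdots\cap F_r = B$, contradicting $\varphi(B)\in[n]\setminus B$. Specialising to $r=1$ gives property~(a); taking $\mathcal{I}_F$ to be the intersection closure of $I(F,\mc{F}')$ inside $2^F\setminus\{F\}$ gives property~(b), since $\mathcal{I}_F$ is intersection-closed and contains $I(F,\mc{F}')$ by construction, and the argument above shows every element of $\mathcal{I}_F$ is a kernel of an $m$-petal sunflower in $\mc{F}$.

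To choose $\varphi$ so that $|\mc{F}'|\ge \beta_{k,m}|\mc{F}|$, I would draw a uniformly random real-valued colouring $c:[n]\to[0,1]$ (independently across elements) and take $\varphi(A)$ to be the element of $\psi(A)$ on which $c$ attains its minimum (almost surely unique). For each $F\in\mc{F}$ and each $x\in F$, set $Y_x:=\bigcup_A(\psi(A)\setminus F)$, where the union ranges over bad $A\subseteq F$ with $x\in\psi(A)$, and let $G_x$ denote the event that $c(x)<c(p)$ for every $p\in Y_x$. A short check shows that $\bigcap_{x\in F} G_x$ forces $F\in\mc{F}'$: for any bad $A\subseteq F$ the vertex-cover property yields some $x\in\psi(A)\cap F$, and $G_x$ then implies $c(x)<c(p)$ for every $p\in\psi(A)\setminus F$, placing the minimum-$c$ element of $\psi(A)$ inside $F$.

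The main obstacle is to lower-bound $\Pr[\bigcap_{x\in F} G_x]$. Setting $W:=F\cup\bigcup_{x\in F} Y_x$, one has $|W|\le k+2^k\cdot km$ since $|\bigcup_x Y_x|\le\sum_{\text{bad }A\subseteq F}|\psi(A)\setminus F|\le 2^k\cdot km$. The event $\bigcap_x G_x$ asserts that the (almost-sure) random linear order on $W$ induced by $c$ is a linear extension of the bipartite partial order whose only comparable pairs are $x<p$ for $x\in F$ and $p\in Y_x$. Since adding order constraints can only decrease the number of linear extensions, the worst case is the complete bipartite poset $F\to(W\setminus F)$, giving $\Pr[\bigcap_x G_x]\ge 1/\binom{|W|}{k}$. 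The standard bound $\binom{|W|}{k}\le(e|W|/k)^k\le(2e\cdot 2^k m)^k\le(25\cdot 2^k km)^k=\beta_{k,m}^{-1}$ then shows $\Pr[F\in\mc{F}']\ge\beta_{k,m}$ for every $F$, hence $\mathbb{E}_c[|\mc{F}'|]\ge\beta_{k,m}|\mc{F}|$, and some colouring witnesses the desired bound. The technical heart is identifying the events $G_x$ that jointly force $F\in\mc{F}'$; once they are in hand, the probability estimate drops out of the standard monotonicity of the number of linear extensions in the constraint poset.
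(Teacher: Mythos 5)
Your proposal is correct, and the structural part (defining $\psi$, choosing $\varphi(A)\in\psi(A)$, taking $\mathcal{I}_F$ to be the intersection closure of $I(F,\mc{F}')$, and the argument that any $B=F\cap F_1\cap\cdots\cap F_r$ with the $F_i$ in $\mc{F}'$ must be a kernel) matches the paper's treatment exactly. Where you depart is the probabilistic construction used to choose $\varphi$ and certify that $|\mc{F}'|\ge\beta_{k,m}|\mc{F}|$. The paper instead invokes its Lemma~2.1 with $\ell=0$: it samples a random $M$-colouring with $M=\Theta(2^k k^2 m)$ colours, retains the sets $F$ that are rainbow and ``$\psi$-colour-separated,'' partitions the possible colour palettes $\binom{[M]}{k}$ into classes by a greedy chromatic-number bound, and restricts to the heaviest class. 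Your alternative replaces the discrete colouring by a random total order (induced by i.i.d.\ uniform reals), takes $\varphi(A)$ to be the $c$-minimum of $\psi(A)$, and lower-bounds $\Pr[F\in\mc{F}']$ by comparing the constraint poset $\{x<p: x\in F,\ p\in Y_x\}$ to the complete bipartite poset $F\to(W\setminus F)$ and counting linear extensions. This is a genuinely different argument that avoids both the ``good set'' step and the palette-design step, and is arguably cleaner for the specific case $\ell=0$ needed here; the paper's Lemma~2.1 is written for general $\ell$ because it is also used in the proof of Theorem~1.2, where $\ell>0$ is essential. Both approaches yield the same order of magnitude, $(O(2^k km))^{-k}$, and your numerical estimate $\binom{|W|}{k}\le (e|W|/k)^k\le(25\cdot 2^k km)^k$ with $|W|\le k+2^k km$ checks out, so the stated constant $\beta_{k,m}$ is recovered.

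Two very minor points worth tidying: you should remark that $\varphi(A)$ can be set arbitrarily when $\psi(A)=\emptyset$ (i.e.\ no member of $\mc{F}$ contains $A$), since the corresponding condition on $\mc{F}'$ is then vacuous; and you should note explicitly that for any bad $A\subseteq F$ the cover property indeed gives some $x\in\psi(A)\cap F$ (namely $x\in\psi(A)\cap(F\setminus A)$), which is the $x$ you need to invoke $G_x$. Neither affects correctness.
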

There are three main differences between \cref{lemma: weak furedi} and \cref{lemma: furedi}. 
Firstly, the main advantage is that we acquire a significantly better dependency on $m$ and $k$.
In particular, the dependency on $k$ is single exponential and that of $m$ is $m^k$. Note that $m^k$ is necessary by considering a $k$-partite set system $\mc{F}$ where each part has size $m-1$ and every possible edge is included. 
Typically, in applications, our stronger bound gives a better dependency in statements which hold for `large enough $n$'.
Secondly, the sunflower we could guarantee is in $\mc{F}$ but not necessarily in $\mc{F}'$.
While this is a weaker conclusion, this distinction is in fact not important in many cases. In fact, one can replace Füredi's delta-system lemma by \cref{lemma: weak furedi} in some of the most well-known applications of the delta-system method, for example, in the proofs of Frankl and Füredi when studying set systems with restricted intersections~\cite{FF85} (i.e., the Erdős--Sós problem), and Chv\'atal's problem on simplices~\cite{FF87}. In \cref{section:weakfuredi}, we demonstrate two further applications of \cref{lemma: weak furedi}. However, the stronger property of the sunflower being contained in $\FF'$ (guaranteed by Füredi's result) can sometimes be useful if we need a recursive embedding argument, for example, in proofs for the Tur\'an numbers of expansions of trees (see~\cite{MV16}).
The last difference is that we cannot guarantee \ref{item:Furedisame}, i.e., that all intersection-patterns $I(F,\mathcal{F}')$ are isomorphic via a projection map. Again, in most applications, this property is convenient but not actually used. However, it can occasionally be necessary for some applications -- for example, once again, in the context of some Tur\'an problems for expansions (see \cite[Section 4.3]{MV16}).

Our proof of \cref{lemma: weak furedi} is in fact very simple and uses a special case of our colour certificate technique that we introduced to prove \cref{theorem: forbidding sunflowers}.

\subsection{Connections to quantum computing}\label{sec:intro_fermionic}

Curiously, the problem of forbidding $L$-cliques in the modular setting (see \cref{sec:intro_modular}) appears naturally in quantum computing.
To describe the connection and application, we give a brief introduction to necessary quantum terminologies.

In quantum physics, a \emph{quantum state} embodies the knowledge of a quantum system, and an \emph{observable} is a physical property or quantity that can be measured.
One important question in quantum computing is \emph{shadow tomography} \cite{aaronson2018shadow,buadescu2021improved}, which aims to learn various properties (i.e., expectation values under observables) of unknown quantum states.
Given many copies of an unknown quantum state $\rho$ and a fixed set of observables $O_1,\ldots,O_s$, a shadow tomography algorithm aims to learn, up to an $\varepsilon$ error, the expectation values of $\rho$ under the measurement of each observable $O_i$, $i\in[s]$.
Due to the cost of obtaining and storing copies of quantum states and the limitation of physical implementation, a recent work by King, Gosset, Kothari, and Babbush~\cite{king2025triply} put forth the concept of being \emph{triply efficient} for shadow tomography algorithms, which asks for:
\begin{itemize}
    \item \emph{Sample Efficiency.} The algorithm should not use too many copies of $\rho$.
    \item \emph{Time Efficiency.} The algorithm should run fast.
    \item \emph{Measurement Efficiency.} The algorithm should only perform measurements on constantly many copies of $\rho$ each time.
\end{itemize}
We refer interested readers to \cite{king2025triply} for formal definition and more literature on shadow tomography.

Observables of particular interest here are \emph{local fermionic observables}, which arise from many-body physics and quantum chemistry such as electronic structure of molecules.
Indeed, $k$-local fermionic observables capture important physical properties even for $k$ as small as $1$ or $2$.
See \cite[Section I]{king2025triply} and the references within.
In light of the motivations above, one of the main results of~\cite{king2025triply} is a triply efficient shadow tomography algorithm for $O(1)$-local fermionic observables. 

\begin{theorem}[{\cite[Theorem 6]{king2025triply}}]\label{thm:fermionic_prior}
    For $k$-local fermionic observables on $n$ qubits, there is a triply efficient shadow tomography algorithm that uses $O_k\left((1/\varepsilon)^{r_k}\cdot\log(n)\right)$ total copies of the unknown quantum state, where $r_1=4$, $r_2=18$, $r_3=110$, and $r_k=O\left((2k)^{k+1}\right)$ in general.
\end{theorem}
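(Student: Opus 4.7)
The plan is to combine a classical-shadows estimator specialised to the matchgate (fermionic Gaussian) group with a combinatorial covering argument driven by the commutativity structure of Majorana products. The standard basis of $k$-local fermionic observables consists of Majorana products $\gamma_S=\gamma_{i_1}\cdots\gamma_{i_k}$ indexed by $S\in\binom{[2n]}{k}$, and the key structural fact is that $\gamma_S$ and $\gamma_T$ commute if and only if $|S\cap T|$ has a prescribed parity (determined by the parity of $k$). Pairwise commutativity of this basis is therefore exactly the $L$-clique relation on $\binom{[2n]}{k}$ for $L$ equal to one residue class modulo~$2$, placing the combinatorial heart of the problem squarely in the modular setting of \cref{theorem: modular clique}.

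First I would build an unbiased single-shot estimator: pick a Haar-random matchgate unitary $U$, apply $U^\dagger$ to one copy of $\rho$, measure in the computational basis, and post-process classically. A Weingarten-style computation on the matchgate group should produce an unbiased estimator of $\mathrm{Tr}(\rho\gamma_S)$ for every $S$ simultaneously, with single-shot variance bounded by some polynomial in $\binom{2n}{k}$ independent of $\rho$. Median-of-means amplification together with a union bound over the $\binom{2n}{k}$ targets controls the failure probability at the price of a $\log n$ overhead. To trade the $n^{O(k)}$ variance factor for the much milder dependence hidden in $r_k$, I would then cover $\binom{[2n]}{k}$ by a small number of pairwise-commuting subfamilies --- that is, $L$-cliques for the relevant parity --- and for each subfamily use a polynomial-depth matchgate circuit that diagonalises all of its observables at once, so that the entire subfamily can be read off from a joint measurement on $O(1)$ copies of $\rho$. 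This is what delivers the measurement efficiency, and a careful accounting of the number of subfamilies, the per-subfamily variance, and the amplification overhead should then give a total sample count of the claimed form $(1/\varepsilon)^{r_k}\log n$.

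The main obstacle is extracting the precise exponents $r_k$. The bound on the number of commuting subfamilies needed to cover $\binom{[2n]}{k}$ translates essentially directly into $r_k$, and the general bound $(2k)^{k+1}$ has the flavour of a F\"uredi-style iterative delta-system extraction (compare \cref{lemma: furedi}): repeatedly peel off a commuting subfamily, recurse on what remains, and absorb a multiplicative $2^{O(k)}$ loss per round over roughly $k$ rounds. Propagating that loss through the variance and the post-processing steps is the delicate part; the resulting double-exponential blow-up in $k$ is precisely the kind of waste that the colour-certificate techniques developed later in the present paper are designed to eliminate, which is why one expects the exponents $r_k$ to be improvable using \cref{theorem: modular clique}.
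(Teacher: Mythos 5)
The statement you were asked to prove is not proved in this paper at all: it is \cite[Theorem 6]{king2025triply}, and the present paper cites it as prior work. What the paper does provide is a short summary (in the paragraph following \cref{thm:fermionic_new}) of the combinatorial bottleneck that determines $r_k$: by the reductions in \cite[Section IV]{king2025triply} one must upper bound the fractional chromatic number $\chi$ of the auxiliary graph $G_{\mathcal F}$ for some $\mathcal F\subseteq\binom{[2n]}{2k}$ with $L=\{1,3,5,\dots,2k-1\}$, under the promise that the clique number of $G_{\mathcal F}$ is at most $m=O(1/\varepsilon^2)$; the prior analysis \cite[Lemma 2]{king2025triply} gives $\chi\le m^{O((2k)^{k+1})}$, which is exactly what yields $r_k=O\left((2k)^{k+1}\right)$. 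Your sketch correctly identifies ``covering by pairwise-commuting subfamilies'' (i.e.\ properly colouring $G_{\mathcal F}$) as the step responsible for the exponent, and correctly attributes it to a F\"uredi-style iterated extraction; beyond that, nothing in your proposal can be checked against the present paper, since the quantum estimator construction (matchgate Weingarten calculus, median-of-means, joint diagonalisation on constantly many copies) is entirely deferred to \cite{king2025triply}.

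Two concrete corrections to your setup. The encoding is by $2k$-uniform sets over $[2n]$, not $k$-uniform sets: a $k$-local fermionic observable corresponds to a product of $2k$ Majorana operators, so $S$ should range over $\binom{[2n]}{2k}$, and the intersection-size relation governing (anti)commutativity is then $L=\{1,3,\dots,2k-1\}$ as the paper states. Also, $(2k)^{k+1}$ is $k^{O(k)}$, not double-exponential in $k$, and your budget of ``a multiplicative $2^{O(k)}$ loss per round over roughly $k$ rounds'' would give $2^{O(k^2)}$ rather than $2^{O(k\log k)}$; the genuinely double-exponential dependence in this paper appears in \cref{thm:doubleexponential} concerning the constant $\alpha_{k,m}$ in F\"uredi's lemma, which is a related but distinct quantity. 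Your final remark --- that the inefficiency is what \cref{theorem: modular clique} is designed to remove --- is the one point that directly matches the paper's discussion, since the paper's improvement \cref{thm:fermionic_new} is obtained precisely by replacing the bound on $\chi$ with the $O_k(m^k)$ bound of \cref{theorem: modular clique fractional chromatic number}.
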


While they anticipate that their algorithm could find applications in quantum simulations of chemistry and fermionic physics, their fast growing $r_k$ renders the bound impractical for $k\ge2$. The authors made considerable attempts in their paper and proposed various conjectures to get better bounds on $r_k$.

Using \cref{theorem: modular clique}, we exponentially improve their estimates on $r_k$. Moreover, our bound is significantly better even for small values of $k$ such as $k=2$ or $k=3$, making practical implementations of the algorithm feasible.

\begin{theorem}\label{thm:fermionic_new}
In \Cref{thm:fermionic_prior}, we can set $r_k=2(k+1)$ for all $k\ge1$.
\end{theorem}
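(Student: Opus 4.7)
The plan is to start from the shadow-tomography algorithm of King, Gosset, Kothari, and Babbush~\cite{king2025triply} and replace the combinatorial input to their sample-complexity analysis by the sharper estimate provided by \cref{theorem: modular clique}. The key observation is that a $k$-local fermionic observable is naturally indexed by a subset of $2k$ Majorana operators on the $2n$-element Majorana alphabet, and two such observables commute if and only if the corresponding supports $S,S' \subseteq [2n]$ satisfy $|S \cap S'| \equiv 0 \pmod{2}$. Hence, carving out a large sub-collection of observables that can be measured jointly on few copies of the unknown state is precisely our problem of extracting a large subfamily of a uniform set system avoiding intersections of size in $L = \{1\} \subseteq \mathbb{Z}/2\mathbb{Z}$.

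This places us in the regime of \cref{theorem: modular clique} with $p = 2$ and $|L| = p-1 = 1$, applied to sets of size $2k$. That theorem supplies a subfamily of density $\Omega_k\!\left(m^{-(k+1)}\right)$, whereas~\cite{king2025triply} uses a generic Ramsey-style bound that loses a factor roughly $m^{-O((2k)^{k+1})}$ at the analogous step. Substituting our estimate into their sample-complexity bookkeeping in place of theirs, the exponent of $1/\varepsilon$ in the total number of required copies of $\rho$ drops to $2(k+1)$, where the extra factor of $2$ reflects the standard variance-to-samples cost from Chebyshev's inequality. In particular, for $k=1$ we recover $r_1 = 4$, while for $k \geq 2$ we obtain exponentially better bounds than the previous $r_2 = 18$, $r_3 = 110$, etc.

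The main obstacle is not new mathematics but careful bookkeeping: one must confirm that the remaining ingredients of the algorithm in~\cite{king2025triply}, in particular its time-efficiency and measurement-efficiency guarantees, are unaffected when we plug in the improved combinatorial bound. Since the reduction from observables to set systems itself is agnostic to the specific Ramsey estimate used at that step, and since the rest of the pipeline (Clifford preprocessing, variance accounting, union bounding over observables) only depends on the combinatorial step through the density of the commuting subfamily produced, this verification should go through essentially verbatim. Once done, the stated bound $r_k = 2(k+1)$ follows directly by combining \cref{theorem: modular clique} with the framework of~\cite{king2025triply}.
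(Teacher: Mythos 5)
Your proposal correctly identifies the high-level reduction — $k$-local fermionic observables correspond to $2k$-uniform sets over $[2n]$, commutation is controlled by intersection parity, and the combinatorial input to the sample-complexity bound of \cite{king2025triply} should be replaced by \cref{theorem: modular clique} with $p=2$. That is indeed the paper's strategy. However, there are two concrete problems.

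First, the numerics. You invoke the general lower bound $\Omega_k\big(m^{-\lceil k/p \rceil - 1}\big)$ from \cref{theorem: modular clique}, yielding density $\Omega_k(m^{-(k+1)})$ on $2k$-uniform sets. But the relevant case here is exactly the ``moreover'' clause of \cref{theorem: modular clique}: $L = (\mathbb{Z}/2\mathbb{Z})\setminus\{0\}$ and the uniformity $2k$ is a multiple of $p=2$, so the sharp bound $\Theta_k(m^{-2k/2}) = \Theta_k(m^{-k})$ applies. This matters: plugging into the copy-complexity expression of \cite[Lemma 6]{king2025triply}, which is $O\big((\tfrac{1}{\varepsilon^2}+s)\cdot\tfrac{k\log(en/k)}{\varepsilon^2}\big)$ with $m = 4/\varepsilon^2$, a colouring of size $s = O_k(m^\alpha)$ yields $r_k = 2\alpha + 2$. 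Your value $\alpha = k+1$ gives $r_k = 2k+4 = 2(k+2)$, not $2(k+1)$; the claimed bound is recovered only with $\alpha = k$. Your accompanying narrative (``the extra factor of $2$ reflects the standard variance-to-samples cost from Chebyshev'') conflates the multiplicative factor of $2$ coming from $m \sim 1/\varepsilon^2$ with the additive $+1$ in the exponent, which actually comes from the extra $1/\varepsilon^2$ factor in the copy-complexity formula.

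Second, and more substantively, what the reduction of \cite{king2025triply} needs is not merely one large commuting subfamily (a single large independent set of $G_{\mathcal{F}}$), but an upper bound on the \emph{fractional chromatic number} of $G_{\mathcal{F}}$, together with an efficient procedure to \emph{sample} a random independent set from the corresponding distribution. Note $|V|/\alpha(G)$ only lower-bounds $\chi_f(G)$. The paper handles this via two steps you omit: (i) the weighted generalisation of the independent-set result, \cref{theorem: modular clique fractional chromatic number} (proved via \cref{thm:modularstructure}, whose proof is set up in weighted form precisely for this purpose), which, together with LP duality \eqref{eq: dual fractional}, yields $\chi_f(G_{\mathcal{F}}) = O_k(m^k)$; and (ii) \cref{lem:qs:fermionic}, which converts the existential fractional-colouring bound into a $\mathrm{poly}(n^k)$-time sampling algorithm via the multiplicative-weight-update method, so that the running time $T$ in \cite[Lemma 6]{king2025triply} is controlled. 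Without both, you cannot substitute into their theorem: you would have the wrong quantity (independence number instead of fractional chromatic number) and no time-efficiency guarantee.
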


To give some details, $k$-local fermionic observables on $n$ qubits have a one-to-one correspondence to $2k$-uniform sets over $[2n]$ (see \cite[Section I]{king2025triply}).
By some nontrivial preprocessing and reductions (see \cite[Section IV]{king2025triply}), it appears that one needs to upper bound the fractional chromatic number $\chi$ of the graph $G_{\mc F}$ for some $\mc F\subseteq\binom{[2n]}{2k}$ and the intersection sizes $L=\{1,3,5,\dots,2k-1\}$.
It is additionally guaranteed that the clique number of $G_{\mc F}$ is at most $m=O(1/\varepsilon^2)$.
Their analysis \cite[Lemma 2]{king2025triply} yields an upper bound of $\chi\le m^{O((2k)^{k+1})}$, which leads to the exponential $r_k$ in \Cref{thm:fermionic_prior} in the end.

In our approach, \cref{theorem: modular clique} gives a large independent set in $G_{\mc F}$ of roughly $m^{-k}\approx(1/\varepsilon)^{-2k}$ fraction, and our proofs in fact also show that $G_{\mc F}$ has fractional chromatic number $\chi$ at most roughly $(1/\varepsilon)^{2k}$.
Putting this back into their reductions gives the final bound of $r_k=2(k+1)$ claimed in \Cref{thm:fermionic_new}.
We provide more details of this analysis in \cref{section: fractional chromatic}.

\section{Colour certificates}\label{section: colour certificate}
In this section, we prove the main ingredient for the proofs of the lower bounds in \cref{theorem: forbidding sunflowers,lemma: weak furedi}.
As mentioned before, given a family $\mc{F} \subseteq \binom{[n]}{k}$, we want to find a ``large'' subfamily $\mc{F}'\subseteq \mc{F}$ such that for each $A \subseteq [n]$ that is not the kernel of any ``large'' sunflower, we enforce that all $F \in \mc{F}'$ containing $A$ must contain some special vertex $\varphi(A)$.
The advantage is that in $\mc{F}'$, no two sets can have intersection precisely $A$.
The idea of constructing such a $\varphi$ has been exploited by F\"uredi~\cite{F83}.

To construct $\mc{F}'$, we use a new approach which we call ``colour certificates''.
Notice that $A \subseteq [n]$ is not the kernel of any ``large'' sunflower if and only if $\mc{F}(A):=\{F\setminus A: A\subseteq F \in \mc{F}\}$ has no ``large'' matching, i.e., a ``large'' collection of disjoint sets.
Hence, if $A$ is not the kernel of any ``large'' sunflower, then $\mc{F}(A)$ has a ``small'' vertex cover, which we denote as $\psi(A)$.
In spirit, our goal is to colour all the elements $[n]$ by $M$ colours such that $M$ is ``small'' and every $\psi(A)$ is rainbow, i.e., no two elements in $\psi(A)$ share the same colour.
Then, we are able to encode all the information of $\psi(A)$ by the colours and reduce our problem to a simple design problem in $\binom{[M]}{k}$.
The formal statement is given by the following lemma.
\begin{lemma}\label{lemma: main}
    Let $n\ge k \ge 1$, $m \ge 2$ and $\ell \in [0,k-1]$ be integers.
    Suppose $\mc F \subseteq \binom{[n]}{k}$.
    Then, there exists a subfamily $\mc F' \subseteq \mc F$ with 
    $$
    |\mc F'| \ge \left(\binom{k}{\ell} \big(8e\cdot 2^kkm\big)^{k-\ell}\right)^{-1}\cdot  |\mc F| 
    $$
    such that the following holds.
    For all $A \subseteq [n]$ satisfying $|A|\in[\ell,k-1]$ and such that $A$ is not the kernel of any sunflower with $m$ petals {in $\mc{F}$}, there exists an element $\varphi(A)\in [n]\setminus A$ such that every $F \in \mc{F}'$ containing $A$ must also contain $\varphi(A)$.
\end{lemma}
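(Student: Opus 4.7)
The plan is to use the \emph{colour-certificate} technique outlined in the introduction: a random colouring of the ground set $[n]$, together with a random design on the resulting colour universe, will let us define a single element $\varphi(A)\in[n]\setminus A$ that lies in every $F\in\mathcal{F}'$ containing $A$.

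First, for each $A$ with $|A|\in[\ell,k-1]$ that is not the kernel of any $m$-petal sunflower in $\mathcal{F}$, I would extract a vertex cover $\psi(A)\subseteq[n]\setminus A$ of the link hypergraph $\mathcal{F}(A)=\{F\setminus A:F\in\mathcal{F},\ A\subseteq F\}$ of size at most $k(m-1)<km$, by taking the union of a maximal matching of $\mathcal{F}(A)$ (which contains at most $m-1$ sets, since no $m$-petal sunflower has kernel $A$). Then I would pick a uniformly random colouring $c:[n]\to[M]$ with $M=\Theta(2^k km)$ and, independently, for each colour-pattern $T\in\bigcup_{i=\ell}^{k-1}\binom{[M]}{i}$, a uniformly random ``target'' $\sigma(T)\in[M]\setminus T$. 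For each relevant $A$, I would define $\varphi(A)$ as the unique $v\in\psi(A)$ with $c(v)=\sigma(c(A))$ whenever this element exists and is unique. The subfamily is
$$\mathcal{F}':=\{F\in\mathcal{F}:c|_F\text{ injective and }\forall\text{ relevant }A\subseteq F,\ \varphi(A)\text{ is defined and }\varphi(A)\in F\}.$$
By construction $\varphi(A)$ depends only on $(A,c,\sigma)$ and not on $F$, so the required common-element property holds automatically for any $\mathcal{F}'$ arising this way.

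The remaining work is to show $\mathbb{E}|\mathcal{F}'|\ge|\mathcal{F}|/\bigl(\binom{k}{\ell}(8e\cdot 2^k km)^{k-\ell}\bigr)$, from which a good realisation of $(c,\sigma)$ gives the lemma. For each fixed $F$, the event $F\in\mathcal{F}'$ splits into injectivity of $c$ on $F$ and on each $F\cup\psi(A)$ (both $\Omega(1)$ for the chosen $M$, since $|F\cup\psi(A)|\le k+km$) and, for each relevant $A\subseteq F$, the event $\sigma(c(A))\in c(F\cap\psi(A))$, which has probability $\ge 1/M$ and is independent across the distinct colour-types $T=c(A)$ (distinct by injectivity of $c|_F$). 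The \textbf{main obstacle} is this probability estimate itself: a naive per-$A$ independence product yields an exponent of $\sum_{i=\ell}^{k-1}\binom{k}{i}$, doubly exponential in $k$, rather than the required $k-\ell$. The crux of the argument should be to \emph{bundle} the $\binom{k}{i}$ level-$i$ constraints per level so that each of the $k-\ell$ levels costs only one factor of $O(2^k km)$; the $2^k$ in the base should come from a union bound over the at most $2^k$ possible subset types inside $F$, and the $\binom{k}{\ell}$ prefactor should arise from fixing a canonical $\ell$-subset of $F$ about which the chain of level constraints is organised. Making this exponent collapse rigorous is the most delicate part of the proof.
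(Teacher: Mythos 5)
Your proposal correctly assembles the ingredients from the paper's sketch (the small cover $\psi(A)$, the random colouring, the plan to reduce to a design on $\binom{[M]}{k}$), but the mechanism you propose to define $\varphi$ — a random target function $\sigma$ picking one colour per colour-pattern, with $\mathcal F'$ defined as the sets that "satisfy" all the resulting constraints — hits exactly the obstacle you identify and does not overcome it. With $\varphi(A)$ fixed \emph{a priori} by $(c,\sigma)$, every one of the up to $\sum_{i=\ell}^{k-1}\binom{k}{i}$ subsets $A\subseteq F$ imposes a constraint on $\sigma$, these constraints are over distinct colour-patterns $c(A)$ (so roughly independent once $c|_F$ is injective), and you end up with a doubly-exponential loss $\approx M^{-\sum_i\binom{k}{i}}$. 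Your proposed fix, "bundling the constraints per level" by organising a chain around a canonical $\ell$-subset, is not a proof: the random $\sigma$ charges once per \emph{pattern} $c(A)$, not once per level, and nothing in the setup makes the different $A$'s at the same level share a constraint. As stated, the exponent does not collapse.

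The paper avoids this entirely by refusing to fix $\varphi(A)$ in advance. It uses only the random colouring $\chi:[n]\to[M]$ (no random $\sigma$) to define "good" sets — roughly, $F$ is rainbow and no colour of $\psi(A)\setminus F$ collides with a colour in $F$ — and then deterministically partitions $\binom{[M]}{k}$ into $K\leq\binom{k}{\ell}\binom{M-\ell}{k-\ell}$ classes so that any two palettes in the same class intersect in fewer than $\ell$ colours (this is just a greedy colouring of the auxiliary graph whose maximum degree is below $K$). It takes $\mathcal F'$ to be the good sets whose palette lands in the most popular class. Only now is $\varphi(A)$ defined, and it is defined \emph{a posteriori}: given all $F_1,\dots,F_s\in\mathcal F'$ containing $A$, one shows first that $\chi(F_1)=\cdots=\chi(F_s)$ (because each $\chi(F_i)$ contains the $\ell$ rainbow colours of $\chi(A)$, and two distinct palettes in the chosen class can share at most $\ell-1$ colours), and then that any element of $\psi(A)\cap(F_1\setminus A)$ — which is nonempty since $\psi(A)$ covers $\mathcal F(A)$ — must lie in every $F_i$ by the goodness condition. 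Because $\varphi(A)$ is read off from $\mathcal F'$ rather than imposed on it, the only probabilistic cost is the factor $\tfrac12$ for goodness, and the only deterministic cost is the single pigeonhole factor $K=O\bigl(\binom{k}{\ell}(M/(k-\ell))^{k-\ell}\bigr)$, which is exactly what yields the exponent $k-\ell$. This a posteriori choice of $\varphi$, rather than any combinatorial bundling of the constraints you laid out, is the missing idea.
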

\begin{proof}
    As mentioned above, we first define $\psi: \binom{[n]}{\ell}\cup\dots\cup\binom{[n]}{k-1} \to 2^{[n]}$ as follows.
    For every $A \subseteq [n]$ with $\ell \le |A| < k$, let $\{F_1,\dots,F_t\}\subseteq \mc{F}$ be a sunflower in $\mc{F}$ with kernel $A$ that obtains the maximum number of petals.
    Then, set 
    $$
    \psi(A):=\begin{cases}
    \bigcup_i (F_i\setminus A) & t<m,\\
    \emptyset & t\ge m.
    \end{cases}
    $$
    In the case $t<m$, by the maximality of $t$, every $F \in \mc{F}$ containing $A$ satisfies that $F \setminus A$ and $\psi(A)$ intersect.
    In addition, $|\psi(A)| \le (k-|A|)(m-1)< (k-\ell)m$.

    Next, we sample a colouring $\chi: [n] \to [M]$ uniformly at random with $M:=4\cdot 2^k k(k-\ell) m$.
    We say that $F \in \mc{F}$ is {\em good} if both of the following conditions hold:
    \begin{enumerate}[(1)]
        \item $F$ is rainbow, i.e., $|\chi(F)|=k$;\label{item:rainbow}
        \item for every $A \subset F$ with $|A|\in[\ell,k-1]$ and every $v \in \psi(A) \setminus F$, the colour $\chi(v)$ is distinct from all the colours $\chi(u)$ with $u\in F$.\label{item:psicolours}
    \end{enumerate}
    For any fixed $F$, the probability that $F$ violates \ref{item:rainbow} is at most $\binom{k}{2}/M<1/4$ and the probability that $F$ violates \ref{item:psicolours} is at most $2^k\cdot k\cdot (k-\ell)m/M\le 1/4$.
    This means the probability for $F$ to be good is at least $1/2$.
    Hence, the expected number of good sets $F \in \mc F$ is at least $|\mc{F}| / 2$.
    
    Therefore, there exists a colouring $\chi: [n]\to [M]$ and a subfamily $\mc F_1$ consisting of good $F \in \mc{F}$ such that $|\mc{F}_1| \ge |\mc{F}|/2$.
    For every $F \in \mc F_1$, since $F$ is rainbow, its colour palette $\chi(F)$ is in $\binom{[M]}{k}$.
    We claim that there is a partition of all the possible colour palettes $\binom{[M]}{k}$ into at most $K:=\binom{k}{\ell}\binom{M-\ell}{k-\ell}$ classes such that inside each class, any two distinct colour palettes $P_1,P_2$ satisfy $|P_1\cap P_2| < \ell$.
    Indeed, we can build an auxiliary graph $G$ with vertex set $\binom{[M]}{k}$ where two vertices $P_1,P_2$ are connected precisely when $|P_1\cap P_2|\ge \ell$.
    Since the maximum degree of $G$ is at most $\binom{k}{\ell}\big(\binom{M-\ell}{k-\ell}-1\big)<\binom{k}{\ell}\binom{M-\ell}{k-\ell}$, we know the chromatic number of $G$ is at most $\binom{k}{\ell}\binom{M-\ell}{k-\ell}$.
    The claim follows since we can take the colour classes in the corresponding proper vertex-colouring of $G$ to be our $K$ classes of $\binom{[M]}{k}$.

    By the pigeonhole principle, among these $K$ classes of the colour palettes, there exists a class $\mc{P} \subseteq \binom{[M]}{k}$ such that at least a $1/K$ fraction of the members $F \in \mc{F}_1$ have $\chi(F) \in \mc{P}$.
    In other words, $\mc{F}':=\{F \in \mc{F}_1: \chi(F) \in \mc{P}\}$ has size at least $|\mc{F}_1|/K$.
    We complete the proof by showing that this subfamily $\FF'$ satisfies the required conditions.
    
    Indeed, using the standard inequality $\binom{a}{b} \le (ea/b)^b$, we see that 
    $$
    K\le \binom{k}{\ell}\left(\frac{eM}{k-\ell}\right)^{k-\ell}
        = \binom{k}{\ell} (4e \cdot 2^k k m)^{k-\ell},
    $$
    which implies that $|\mc{F}'| \ge |\mc{F}|/(2K) \ge |\mc{F}|/\left(\binom{k}{\ell} (8e \cdot 2^k k m)^{k-\ell}\right)$.
    
    In addition, let $A \subseteq [n]$ be such that $\ell \le |A| < k$ and $A$ is not the kernel of any sunflower in $\mc{F}$ with $m$ petals.
    We need to find $\varphi(A) \in [n]\setminus A$ such that any $F \in \mc{F}'$ containing $A$ must also contain $\varphi(A)$.
    To this end, let $F_1,\dots,F_s$ be all the sets in $\mc{F}'$ that contain $A$.
    We may assume $s \ge 1$ as otherwise we can set $\varphi(A)$ arbitrarily.
    Let $\psi'(A) := \psi(A) \cap (F_1\cup\dots\cup F_s)$ be the ``remaining elements'' among $\psi(A)$.
    We note that $\psi'(A)$ is rainbow, i.e., $|\chi(\psi'(A))|=|\psi'(A)|$.
    To see this, note that if $u,v \in \psi'(A)$ and there is some $i$ with $u,v\in F_i$, then \ref{item:rainbow} guarantees that $u,v$ have different colours.
    Otherwise, say $F_i$ contains $u$ but not $v$. Then \ref{item:psicolours} guarantees that $u,v$ have different colours. Thus, $\psi'(A)$ is indeed rainbow.
    In addition, since $F_1$ is good, we know that $F_1$, and hence also $A$, must be rainbow, i.e., $|\chi(A)|=|A|$.
    This means $|\chi(F_i)\cap \chi(F_j)| \ge |A| = \ell$ for all $i,j$.
    According to our choice of $\mc{P}$, $\chi(F_i)$ and $\chi(F_j)$ must be the same colour palette, thus, $\chi(F_1)=\chi(F_2)=\dots=\chi(F_s)$.
    Recall that $\psi(A)$ and $F_1\setminus A$ intersect.
    Let $\varphi(A) \in \psi(A)\cap (F_1\setminus A)$ be arbitrary, and we claim that $\varphi(A) \in F_i$ for all $1 \le i \le s$.
    Indeed, if $\varphi(A) \notin F_i$, then \ref{item:psicolours} on $F_i$ implies $\chi(\varphi(A)) \notin \chi(F_i)$, but this contradicts that $\chi(F_1)=\chi(F_i)$.
    This proves the claim and completes the proof.
\end{proof}

\section{Forbidding \texorpdfstring{$L$}{L}-sunflowers}\label{sec: forbidding sunflowers}

In this section, we prove \cref{theorem: forbidding sunflowers}.
We start with the lower bound using \cref{lemma: main}.

\begin{proof}[Proof of the lower bound in \cref{theorem: forbidding sunflowers}]
    We first consider all $k$ and all $L$ such that $0 \notin L$.
    The case $L = \emptyset$ is trivially true by taking $\mc{F}'=\mc{F}$, so we may assume $L \neq \emptyset$.
    In this case, $a = 0$ and $b \in \{1,2,\dots,k-1\}$.
    Now, suppose a set system $\mc{F}$ consists of $k$-element sets and contains no $L$-sunflower with $m$ petals.
    By \cref{lemma: main} with $\ell=b$, there exists $\mc{F'}\subseteq \mc{F}$ with $|\mc{F}'|=\Omega_k(m^{-k+b}|\mc{F}|)=\Omega_k(m^{-k+b-a}|\mc{F}|)$ such that for any $A\subseteq [n]$ with $|A| \in L$, there exists an element $\varphi(A) \in [n]\setminus A$ such that every $F\in \mc{F}'$ containing $A$ also contains $\varphi(A)$.
    We show that $\mc{F}'$ avoids intersections of size in $L$.
    Indeed, suppose $F_1,F_2 \in \mc{F}'$ satisfy $|F_1\cap F_2| \in L$.
    Putting $A:=F_1\cap F_2$, the condition on $\mc{F}'$ guarantees $\varphi(A) \in F_1$ and $\varphi(A) \in F_2$, contradicting that $F_1\cap F_2 = A$.
    This proves the lower bound in the case $0\not\in L$.

    Now, suppose $0 \in L$, i.e., $a \ge 1$.
    Recall that $\{0,1,\dots,a-1\} \subseteq L$ and $a \notin L$.
    Suppose that a (non-empty) set system $\mc{F}$ consists of $k$-element sets and contains no $L$-sunflower with $m$ petals.
    In particular, there is no matching (i.e., $\{0\}$-sunflower) with $m$ sets.
    Suppose $F_1,\dots,F_t$ form a maximal matching in $\mc{F}$.
    We know $t < m$ and any $F \in \mc{F}$ must intersect some $F_i$, $1 \le i \le t$.
    By the pigeonhole principle, for some $u^{(1)} \in F_1\cup \dots\cup F_t$, the family $\mc{F}^{(1)}:=\{F\setminus u^{(1)}: F\in\FF, u^{(1)} \in F\}$ has cardinality $|\mc{F}^{(1)}| \ge |\mc{F}|/(kt)\ge |\mc{F}|/(km)$.
    In addition, every set $F \in \mc{F}^{(1)}$ corresponds to a set $F\cup \{u^{(1)}\} \in \mc{F}$.
    This means that $\mc{F}^{(1)}$ contains no $L^{(1)}$-sunflower with $m$ petals where $L^{(1)}:=\{\ell -1: \ell \in L\setminus\{0\}\}$.
    We iterate this $a$ times to acquire $\mc{F}^{(1)},\dots,\mc{F}^{(a)}$ with $L^{(1)},\dots,L^{(a)}$ and distinct $u^{(1)},\dots,u^{(a)}$ such that 
    \begin{itemize}
        \item $|\mc{F}^{(a)}| \ge \frac{1}{km}|\mc{F}^{(a-1)}|\ge\dots\ge \frac{1}{(km)^a} |\mc{F}|$;
        \item every set $F \in \mc{F}^{(a)}$ corresponds to a set $F\cup \{u^{(1)},\dots,u^{(a)}\} \in \mc{F}$;
        \item $\mc{F}^{(a)}$ contains no $L^{(a)}$-sunflower with $m$ petals where $L^{(a)}:=\{\ell: \ell + a \in L\} \subseteq [0,k-a-1]$.
    \end{itemize}
    If $a=k$, then $\mc{F}^{(a)}$ contains precisely the empty set, which means $|\mc{F}| \le (km)^k$.
    In this case, $b=k$ and taking $\FF'$ to be a singleton containing an arbitrary set in $\mc{F}$ gives $|\FF'|=\Omega_k(m^{-k}|\FF|)=\Omega_k(m^{-k+b-a}|\FF|)$.
    If $a < k$, then we know that $0 \notin L^{(a)}$ and $\min\{i \ge 0: i \in L^{(a)} \text{ or } i = k-a\} = b-a$.
    By the case $0 \notin L$ above, we know that $\mc{F}^{(a)}$ contains a subfamily $\mc{F}'^{(a)}$ with $|\mc{F}'^{(a)}|=\Omega_k(m^{-(k-a)+b-a} |\mc{F}^{(a)}|)$ that avoids intersections of size in $L^{(a)}$.
    Take $\mc{F}' := \{F \cup \{u^{(1)},\dots,u^{(a)}\}: F \in \mc{F}'^{(a)}\}$.
    Then 
    $$
    |\mc{F}'|=|\mc{F}'^{(a)}|=\Omega_k(m^{-(k-a)+b-a} |\mc{F}^{(a)}|)=\Omega_k(m^{-k+b-a} |\mc{F}|).
    $$
    In addition, let $F\cup \{u^{(1)},\dots,u^{(a)}\}, F'\cup \{u^{(1)},\dots,u^{(a)}\} \in \mc{F}'$ be any two distinct sets.
    Using that $|F \cap F'|\notin L^{(a)}$, we know $|(F\cup \{u^{(1)},\dots,u^{(a)}\})\cap (F'\cup \{u^{(1)},\dots,u^{(a)}\})| = a+|F\cap F'| \notin L$.
    This shows that $\mc{F}'$ avoids intersections of size in $L$ and completes the proof of the lower bound.
\end{proof}

Next, we prove the upper bound in \cref{theorem: forbidding sunflowers} (i.e., the tightness of the lower bound).
We will show how to reduce the general case to the following result, where $0 \notin L$ and $1 \in L$.

\begin{theorem} \label{theorem: forbidding sunflowers with one but zero}
    Let $k \ge 2$ and $m \ge 2$ be integers and let $L\subseteq [0,k-1]$ such that $0 \notin L$ and $1 \in L$.
    Then, $r_{\textnormal{sf}}(k,L,m) = O_k(m^{-({k-1})})$.
\end{theorem}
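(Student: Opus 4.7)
The plan is to produce, for each admissible $L$, a finite family $\mathcal{F}$ of $k$-element sets that is free of $m$-petal $L$-sunflowers and in which every subfamily avoiding intersections of size in $L$ has cardinality at most $C_k\cdot m^{-(k-1)}|\mathcal{F}|$. As a first reduction, I would observe that it suffices to treat the case $L=\{1\}$: any $L$ with $0\notin L$ and $1\in L$ contains $\{1\}$, so subfamilies avoiding intersections in $L$ are in particular intersection-$1$-free; and a family containing no $m$-petal sunflower at all (regardless of kernel size) is automatically free of $L$-sunflowers for every such $L$. Thus a single construction that is sunflower-free and whose largest intersection-$1$-free subfamily is small handles every permissible $L$ at once.

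The base case $k=2$ (where $L=\{1\}$ is forced) is the edge set of $K_{m-1,m-1}$: the maximum degree is $m-1$, hence no sunflower with $m$ petals exists, and any intersection-$1$-free subfamily is a matching of size at most $m-1$, yielding the ratio $\Theta(1/m)$. For $k\geq 3$ I would first try the complete $k$-partite $k$-uniform hypergraph $\mathcal{F}=V_1\times\cdots\times V_k$ with $|V_i|=m-1$: a sunflower with kernel of size $\ell$ would require $m$ pairwise distinct values in each of the $k-\ell$ non-kernel coordinates, contradicting $|V_i|=m-1$. To bound the maximum intersection-$1$-free subfamily $\mathcal{F}'\subseteq\mathcal{F}$, I would build the graph $H$ on $\mathcal{F}'$ whose edges join pairs sharing at least two coordinates: two sets in distinct connected components of $H$ share exactly zero coordinates (since by assumption they cannot share exactly one), so the components are pairwise coordinate-disjoint in every part, giving $\sum_{C} a_i(C)\leq m-1$ for each $i\in[k]$, where $a_i(C)$ is the number of distinct values from $V_i$ used by component $C$. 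A structural lemma bounding $|C|$ in terms of the $a_i(C)$'s, summed over components, would then produce $|\mathcal{F}'|=O_k(m)$ and hence the desired ratio $O_k(m^{-(k-1)})$.

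The hard part will be this bound on $|C|$ for $k\geq 4$. The plain product already admits large "diagonal" intersection-$1$-free subfamilies, such as $\{\{v_1,v_2,v_1',v_2'\}\}$-type constructions that pair coordinates with each other; these yield subfamilies of size $\Omega((m-1)^{\lfloor k/2\rfloor})$, which is too large to give the sharp bound $O_k(m^{-(k-1)})$ from the complete product alone. To overcome this, I would modify $\mathcal{F}$ by imposing an additional algebraic constraint (for instance, restricting to those transversals satisfying a nontrivial linear equation in an abelian group structure on the $V_i$'s), tailored so that (a) $|\mathcal{F}|$ remains of order $m^{k-1}$ after the restriction, (b) no new $m$-petal sunflowers are introduced, and (c) the restriction destroys the problematic diagonal-type flats, forcing every component $C$ of $H$ to have size linear (not polynomial) in $\max_i a_i(C)$. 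Matching the construction to the component-size bound is the main delicate step; once carried out, the coordinate-usage inequality above closes the argument and delivers the claimed $O_k(m^{-(k-1)})$ upper bound on $r_{\textnormal{sf}}(k,L,m)$.
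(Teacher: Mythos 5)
Your opening reduction — build a single family $\mathcal F$ with \emph{no} $m$-petal sunflower of any kernel size, then bound its intersection-$1$-free subfamilies — is logically valid (any such $\mathcal F$ is automatically $L$-sunflower-free for every admissible $L$, and any $L$-intersection-avoiding subfamily is in particular intersection-$1$-free since $1\in L$). But this over-constrains the problem in a way that appears fatal for $k\ge 4$. By the Erd\H{o}s--Rado sunflower lemma, a $k$-uniform family with no $m$-petal sunflower has size $O_k(m^k)$, so you would need to exhibit such a family whose maximum intersection-$1$-free subfamily is $O_k(m)$. The complete $k$-partite hypergraph with parts of size $m-1$ fails, as you note: when $k\ge 4$ one can pair coordinates via bijections, e.g.\ $\{(a,\sigma(a),b,\tau(b))\}$ for $k=4$, to obtain an intersection-$1$-free subfamily of size $(m-1)^2$. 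The proposed algebraic repair does not fix this: if you cut down to transversals satisfying one linear equation $\sum_i v_i = 0$ over an abelian group of order $m-1$, the family shrinks to $(m-1)^{k-1}$ but the diagonal family $\{(a,-a,b,-b,\dots)\}$ satisfies the equation automatically and still has size $(m-1)^{\lfloor k/2\rfloor}$, giving a ratio $(m-1)^{\lfloor k/2\rfloor-(k-1)}$ — for $k=4$ that is $(m-1)^{-1}$, far from the target $(m-1)^{-3}$. Imposing more independent linear constraints makes the family itself too small. And the brute-force candidate $\binom{[m+O(1)]}{k}$, while sunflower-free, has intersection-$1$-free subfamilies of size $\Theta(m^{k-2})$ (all sets through a fixed pair), also much too large. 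I do not see how any family meeting your constraint (a) can achieve the required ratio for $k\ge 4$; at minimum you have not supplied the structural lemma your plan hinges on, and the obvious candidates for it are false.

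The paper avoids this bottleneck by \emph{not} demanding $\mathcal F$ be sunflower-free. For $L=\{1\}$ it takes $\mathcal F$ to be the vertex sets of length-$(k-1)$ paths in a layered graph $G$ on parts $V_1,\dots,V_k$ with $|V_i|=n^{k-i-1}m^i$, where every vertex has out-degree $m$ to the next layer and in-degree $n$ to the previous one; here $n$ is a free parameter taken $\to\infty$. This family \emph{does} contain $m$-petal sunflowers, but only with kernels corresponding to suffixes $V_i,\dots,V_k$ of the chain, which have size $\ge 2\notin L$; meanwhile $|\mathcal F|=n^{k-2}m^k$ can be made arbitrarily large, escaping the Erd\H{o}s--Rado ceiling. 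The bound $|\mathcal F'|\le(1+o_n(1))m^{-(k-1)}|\mathcal F|$ is then proved by a heavy/light edge count: light edges contribute $o(m^{-(k-1)}|\mathcal F|)$ sets, and a heavy-edge argument (using the intersection-$1$-free hypothesis together with the in/out-degree asymmetry $n\gg m$) shows each vertex is incident to at most one heavy edge going forward, bounding the heavy-only paths by $|V_1|=m^{-(k-1)}|\mathcal F|$. For general $L$ the chain is replaced by a tree whose shape encodes $L$ (\cref{lemma:decomposeL}), so the construction is genuinely $L$-dependent; a single family for all admissible $L$ is not what the paper does, and your reduction forecloses the key degree of freedom ($n\to\infty$) that makes the tight bound attainable.
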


Assuming \cref{theorem: forbidding sunflowers with one but zero}, we complete the proof of \cref{theorem: forbidding sunflowers}.

\begin{proof}[Proof of the upper bound in \cref{theorem: forbidding sunflowers}] 
    Recall that we write $a =\min\{i \ge 0: i \notin L\}$ and $b= \min\{i \ge a: i \in L \textnormal{ or } i = k\}$.
    Observe that if $L=\emptyset$ and thus $a=0,b=k$, then $-k+b-a=0$ and so trivially $|\FF'|=O_k(m^{-k+b-a}|\FF|)$. Furthermore, if $m\leq f(k)$ for some arbitrary fixed function $f$, then $|\FF'|\le|\FF|=O_k(m^{-k+b-a}|\FF|)$ clearly holds. Thus, from now on, we may assume that $L\not =\emptyset$ and $m$ is sufficiently large (compared to $k$).
    
    First, assume $L$ is of the form $L=[0,a-1]$, so $b=k$. Let $\FF=\binom{[m-1]}{k}$, so $|\FF|=\Theta_k(m^k)$. Clearly, $\FF$ contains no $L$-sunflowers with $m$ petals (and in fact no $m$-petal sunflower of any kernel size). By the Erdős--Ko--Rado theorem~\cite{EKR61}, if $m$ is large enough and $\FF'\subseteq\FF$ avoids intersections of size in $L$, then 
    $$
    |\FF'|\leq \binom{m-1-a}{k-a}\leq m^{k-a}=O_k(m^{-a}|\FF|)=O_k(m^{-k+b-a}|\FF|).
    $$ 
    
    Next, assume $a=0$, so $b\ge1$ and $L=\{b\}\cup(L\cap[b+1,k-1])$.  Let $k'=k-(b-1)$ and $L'=\{\ell-(b-1):\ell\in L\}$. By \cref{theorem: forbidding sunflowers with one but zero}, we can construct some (non-empty) finite set system $\GG$ of $k'$-element sets such that $\GG$ contains no $L'$-sunflower with $m$ petals, and whenever $\GG'\subseteq \GG$ avoids intersections of size in $L'$, it satisfies $|\GG'|= O_{k'}(m^{-(k'-1)}|\GG|)=O_k(m^{-(k-b)}|\GG|)$. 
    Now construct $\FF$ from $\GG$ by enlarging the ground set by $b-1$ new ``dummy'' elements, and adding these $b-1$ elements to each set in $\GG$. In this way, we have $|\FF|=|\GG|$ and $\FF$ contains no $L$-sunflowers with $m$ petals.
    In addition, whenever $\FF'\subseteq \FF$ avoids intersections in $L$, we can remove those dummy elements and obtain $\GG'\subseteq\GG$ that avoids intersections in $L'$. Hence $|\FF'|=|\GG'|= O_k(m^{-(k-b)}|\GG|)=O_k(m^{-(k-b)}|\FF|)$, as desired.
    
    Finally, assume $a\ge1$ and $L\not =[0,a-1]$, so $a+1\le b\le k-1$ and $L=[0,a-1]\cup \{b\}\cup (L\cap [b+1,k-1])$. Let $k'=k-a$ and $L'=\{\ell-a:\ell\in L, \ell\geq a\}$. As $0\not \in L'$, by the paragraph above, we can find some (non-empty) finite set system $\GG$ of $k'$-element sets such that $\GG$ contains no $L'$-sunflower with $m$ petals, and whenever $\GG'\subseteq \GG$ avoids intersections of size in $L'$, it satisfies 
    $$
    |\GG'|= O_{k'}(m^{-(k-a)+(b-a)}|\GG|)=O_k(m^{-k+b}|\GG|).
    $$
    Let $D$ be a set of size $m-1$ consisting of new elements. For each $X\in\binom{D}{a}$, take a copy $\GG_X$ of our set system $\GG$ over some ground set $V_X$ in such a way that $V_X\cap V_Y=\emptyset$ if $X\not =Y$, and also $V_X\cap D=\emptyset$ for all $X$. Let
    $$
    \FF_X=\{X\cup A:A\in\GG_X\}
    \quad\text{and}\quad
    \FF=\bigcup_{X\in\binom{D}{a}}\FF_X.
    $$
    Note that $\FF$ is a family of $k$-element sets over ground set $D\cup\bigcup_{X\in\binom{D}{a}}V_X$.
    
    First notice that $\FF$ contains no $L$-sunflower with $m$ petals. 
    To see this, assume, for a contradiction, that there is such an $m$-petal sunflower formed by $P_1,\dots,P_m\in\FF$ with kernel size $\ell\in L$.
    Then, either $P_1\cap D,\dots,P_m\cap D$ forms an $m$-petal sunflower or all the sets $P_i\cap D$ coincide.
    But the former does not hold because $|D| < m$, so it must be that $P_1\cap D=\cdots=P_m\cap D=X$ for some $X\in\binom{D}{a}$. 
    This means $P_1\setminus X,\dots,P_m\setminus X$ form an $m$-petal sunflower of kernel size $\ell-a\in L'$ in $\GG_X$, giving a contradiction.
    
    It remains to show that if $\FF'\subseteq \FF$ avoids intersections of size in $L$, then $|\FF'|= O_k(m^{-k+b-a}|\FF|)$. Thus, let $\FF'$ be a subfamily avoiding intersections of size in $L$. 
    Note that for every distinct $X_1,X_2\in \binom{D}{a}$ and $F_1\in\FF_{X_1},F_2\in\FF_{X_2}$, we have $F_1\cap F_2=X_1\cap X_2$ and $|X_1\cap X_2|\in[0,a-1]\subseteq L$.
    Therefore there is some set $X\in \binom{D}{a}$ such that $\FF'\subseteq \FF_X$. Moreover, $\{F\setminus X:F\in\FF'\}$ must be a subfamily of $\GG_X$ that avoids intersections in $L'$, hence 
    $$
    |\FF'|= O_k\left(m^{-k+b}|\GG|\right)=O_k\left(m^{-k+b}\cdot\frac{|\FF|}{\binom{m-1}a}\right)=O_k\left(m^{-k+b-a}|\FF|\right).
    $$
    This completes the proof of the tightness in \cref{theorem: forbidding sunflowers}.
\end{proof}

The full proof of \cref{theorem: forbidding sunflowers with one but zero} is rather technical and deferred to \cref{sec: appendix}. Here we prove the special case $L=\{1\}$ as it already illustrates many of the ideas. (Note that, by the reduction above, even this special case is sufficient to deal with the cases $L=\{\ell\}$ for each $\ell$.) 
For simplicity of notation, we will show that $r_{\textnormal{sf}}(k,\{1\},m+1) = O_k(m^{-(k-1)})$. We will make use of the following graph.
\begin{lemma}\label{lemma: digraph}
    For any positive integers $k,m,n$ with $k\geq 2$, there exists a graph $G$ such that the vertex set of $G$ can be partitioned into $V_1,\dots,V_k$ and the following properties hold.
    \begin{enumerate}
        \item Each edge of $G$ goes between $V_i$ and $V_{i+1}$ for some $i\in [k-1]$.
        \item We have $|V_i|=n^{k-i-1}m^{i}$ for all $i\in [k-1]$, and $|V_k|=m^{k-1}$.
        \item For all $i\in[k-1]$, each vertex in $V_i$ has $m$ neighbours in $V_{i+1}$. \label{prop:out}
        \item Each vertex in $V_i$ has $n$ neighbours in $V_{i-1}$ $\forall i\in [2,k-1]$, and each vertex in $V_k$ has $m$ neighbours in $V_{k-1}$. \label{prop:in}
        \item For all $i,j\in[k]$ with $i<j$ and all $u \in V_i$, $v \in V_j$, there is at most one path with $j-i$ edges that starts from $u$, goes through parts $V_{i+1},V_{i+2},\dots,V_{j-1}$, and ends at $v$. \label{prop:paths}
    \end{enumerate}
\end{lemma}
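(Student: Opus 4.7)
The plan is to build $G$ as an explicit ``shift-register'' graph: view each $V_i$ as a Cartesian product of $[n]$-coordinates and $[m]$-coordinates, and let every edge perform a deterministic rewriting of the coordinate tuple. This way, tracing any path becomes a memoryless process in which both endpoints already encode the full path.

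Concretely, I will set $V_i:=[n]^{k-i-1}\times[m]^i$ for $i\in[k-1]$ and $V_k:=[m]^{k-1}$, which gives Property~2 on the sizes. For $i\in[k-2]$, I join $(a_1,\dots,a_{k-i-1},b_1,\dots,b_i)\in V_i$ to $(a_2,\dots,a_{k-i-1},b_1,\dots,b_i,c)\in V_{i+1}$ for each $c\in[m]$; that is, drop the leading $[n]$-coordinate and append a new $[m]$-coordinate. Since $V_{k-1}$ has no $[n]$-coordinates left to drop, between $V_{k-1}$ and $V_k$ I instead join $(b_1,\dots,b_{k-1})\in V_{k-1}$ to $(c,b_2,\dots,b_{k-1})\in V_k$ for each $c\in[m]$, overwriting the first coordinate. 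Property~1 is automatic. Properties~3 and~4 are then a direct count: forward degrees equal $m$ (the number of choices for $c$); every vertex in $V_i$ with $i\in[2,k-1]$ has $n$ in-neighbors in $V_{i-1}$ because the dropped coordinate $a_1$ ranges over $[n]$; and every $(v_1,\dots,v_{k-1})\in V_k$ has exactly the $m$ in-neighbors $(b_1,v_2,\dots,v_{k-1})\in V_{k-1}$, $b_1\in[m]$.

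The heart of the argument is Property~5. Tracing a candidate path $u=w_i,w_{i+1},\dots,w_j=v$ forward and recording the sequence of ``new'' coordinates $c_{i+1},\dots,c_j$, each $w_t$ becomes a concatenation of a suffix of $u$'s $a$-tuple, all of $u$'s $b$-tuple, and the new coordinates inserted so far. Comparing this explicit formula against $v$ forces certain coordinates of $u$ and $v$ to agree (otherwise no path exists at all) and simultaneously reads off every $c_t$, so each intermediate vertex is determined by the pair $(u,v)$.

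The main subtlety I anticipate is at the final step $V_{k-1}\to V_k$: a naive rule such as ``agree on all but the last coordinate'' would leave $c_{k-1}$ unrecorded in $v$ and create $m$ parallel paths. The overwrite-at-the-front rule circumvents this by reusing the slot formerly holding $b_1$, which is already pinned down by $u$, so no information is erased and uniqueness survives the last step.
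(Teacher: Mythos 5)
Your construction is correct and is essentially the same shift-register/product idea as the paper's proof, which sets $V_i = [m]^{i-1}\times[n]^{k-i-1}\times[m]\times\{i\}$ and has each edge overwrite the $i$-th coordinate in place, whereas you slide the coordinate window — the two are isomorphic up to permuting coordinates, and your analysis of Property~5 (the $c_t$'s being read off from $v$, with the overwrite-at-the-front rule at the last layer) is the same uniqueness argument. One small technicality: as written $V_{k-1}$ and $V_k$ are both literally $[m]^{k-1}$ (and other $V_i$'s coincide when $m=n$), so you should tag each $V_i$ with its layer index as the paper does to make the parts formally disjoint.
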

\begin{proof}
    For each $i\in[k-1]$, let $V_i=[m]^{i-1}\times [n]^{k-i-1}\times [m]\times\{i\}$, and let $V_k=[m]^{k-1}\times \{k\}$. 
    We define the graph $G$ on vertex set $V_1\cup\dots\cup V_k$ as follows. 
    For each $i\in [k-1]$, the neighbourhood of $(a_1,\dots,a_{k-1},i)\in V_i$ in $V_{i+1}$ is given by
    $$\{(a_1,\dots, a_{i-1},x,a_{i+1},a_{i+2},\dots,a_{k-1},i+1):x\in[m]\}.$$
    In other words, we replace the $i$th coordinate by an arbitrary element of $[m]$, and increase the last coordinate by $1$. 
    It is straightforward to check that all the conditions are satisfied.
\end{proof}

\begin{proof}[Proof of \cref{theorem: forbidding sunflowers with one but zero} for $L=\{1\}$]
    As mentioned above, we prove $r_{\textnormal{sf}}(k,\{1\},m+1)= O_k(m^{-(k-1)})$ for $k \ge 2,m\ge 1$.
    Let $n$ be a sufficiently large integer compared to $m$, and let $G$ be the graph from~\cref{lemma: digraph} for the parameters $k,m,n$.
    Let $\mc{F}$ be the collection of all $k$-sets of vertices in $G$ which form paths from $V_1$ to $V_k$ of length $k-1$.
    In other words, these are the (vertex sets of) paths that start from $V_1$, go through $V_2,V_3,\dots,V_{k-1}$, and end at $V_k$.
    Clearly, 
    $$
    |\mc{F}|=|V_1|m^{k-1}=n^{k-2}m^k.
    $$
    Also, Properties~\ref{prop:out} and~\ref{prop:in} easily guarantee that there is no sunflower with $m+1$ petals whose kernel has size 1.
    Now, assume that $\FF'\subseteq \FF$ avoids intersections of size 1, i.e., no $F,F' \in \mc{F}'$ satisfies $|F\cap F'|=1$.
    Our goal is to show that $|\FF'|\le (1+o(1)) m^{-(k-1)}|\FF|=(1+o(1)) |V_1|$, where the $o(1)$ term denotes a quantity that converges to $0$ as $n\to\infty$ (with $k,m$ fixed).

    Given an edge $e$ of $G$ between $V_i$ and $V_{i+1}$, we say that $e$ is \emph{light} if $e$ is contained in at most $n^{i-\frac{3}{2}}$ sets in $\FF'$, and \emph{heavy} otherwise.     
    Observe that the number of sets in $\FF'$ that contain a light edge is at most $$
        \sum_{i=1}^{k-1}e_G(V_i,V_{i+1})n^{i-\frac{3}{2}}
        =\sum_{i=1}^{k-1}n^{k-i-1}m^{i+1}n^{i-\frac{3}{2}}
        <kn^{k-\frac{5}{2}}m^k=kn^{-1/2}|\FF|=o\left(m^{-(k-1)}|\FF|\right).
    $$
    Here, $e_G(V_i,V_{i+1})$ stands for the number of edges of $G$ between $V_i$ and $V_{i+1}$.
    Hence, it suffices to show that the number of sets in $\FF'$ formed by paths with $k-1$ heavy edges is small.
    \begin{claim}
        If $n$ is sufficiently large, then for each $i \in [k-1]$, every vertex in $V_i$ is incident to at most one heavy edge to $V_{i+1}$.
    \end{claim}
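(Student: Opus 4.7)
The plan is to argue by contradiction. Suppose some $v \in V_i$ is incident to two distinct heavy edges $vu_1, vu_2$ with $u_1, u_2 \in V_{i+1}$, and let $N_1, N_2 > n^{i-3/2}$ denote the numbers of sets of $\FF'$ containing $vu_1, vu_2$ respectively. Any pair $F_1 \ni vu_1$, $F_2 \ni vu_2$ in $\FF'$ is distinct (since $u_1 \ne u_2$) and contains $v$, so because $\FF'$ avoids intersections of size $1$, these two sets must share some extra vertex $w \ne v$. The first key step is to show that $w$ must lie in $V_j$ for some $j \le i-1$: clearly $w \notin V_{i+1}$ since $u_1 \ne u_2$; and for $j \ge i+2$, each of $F_1$ and $F_2$ supplies a length-$(j-i)$ path from $v$ to $w$ through $V_{i+1}, \ldots, V_{j-1}$, and Property~\ref{prop:paths} forces these two paths to be the same, whence the $V_{i+1}$-vertices coincide, contradicting $u_1 \ne u_2$.

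The second step is a counting argument. Fix any $F_1 \in \FF'$ containing $vu_1$, and let $w_j$ denote its $V_j$-vertex for $j = 1, \ldots, i-1$. By the first step, every $F_2 \in \FF'$ containing $vu_2$ must pass through some $w_j$, so it suffices to upper-bound the number of $F \in \FF$ that contain $v, u_2$, and a fixed $w_j \in V_j$. By Property~\ref{prop:paths}, the subpath of $F$ from $w_j$ to $v$ (through $V_{j+1}, \ldots, V_{i-1}$) is uniquely determined, so such an $F$ is specified by a length-$(j-1)$ path in $G$ from $V_1$ to $w_j$ (at most $n^{j-1}$ choices by Property~\ref{prop:in}) and a length-$(k-i-1)$ extension from $u_2$ to $V_k$ (at most $m^{k-i-1}$ choices by Property~\ref{prop:out}). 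Summing over $j$ yields
$$N_2 \;\le\; \sum_{j=1}^{i-1} n^{j-1} m^{k-i-1} \;\le\; 2 n^{i-2} m^{k-i-1}$$
for $n \ge 2$, which combined with $N_2 > n^{i-3/2}$ forces $n^{1/2} < 2 m^{k-i-1} \le 2 m^{k-2}$; this fails once $n > 4m^{2(k-2)}$, yielding the desired contradiction. (The case $i=1$ is already handled, since the sum is empty and forces $N_2 \le 0$ against $N_2 \ge 1$.)

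The main obstacle to watch is ruling out a shared vertex $w$ above $V_{i+1}$; without path-uniqueness this could conceivably fail, but Property~\ref{prop:paths} handles it cleanly, intuitively because once the paths of $F_1$ and $F_2$ branch apart at $v$ they cannot re-converge in any higher layer. After that the argument reduces to a routine counting using the degree conditions of Properties~\ref{prop:out} and~\ref{prop:in}.
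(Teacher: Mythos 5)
Your proof is correct and takes essentially the same approach as the paper: both use Property~\ref{prop:paths} to force the extra shared vertex below $V_i$, then count paths via Properties~\ref{prop:out} and~\ref{prop:in}. The paper's version is marginally tighter — it uses the fact that agreement indices form an interval to pin the shared vertex to $V_{i-1}$ specifically (giving the bound $n^{i-2}m^{k-i-1}$ directly), whereas you allow the shared vertex to lie in any $V_j$ with $j\le i-1$ and sum over $j$, picking up an extra constant factor that is still harmless.
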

    \begin{proof}
        Suppose for contradiction that there is some vertex $u\in V_i$ and distinct vertices $v,v'\in V_{i+1}$ such that $uv$ and $uv'$ are both heavy. 
        Pick any $F\in \FF'$ containing the edge $uv$. 
        Whenever $F' \in\FF'$ contains the edge $uv'$, we have $u\in F\cap F'$, thus, by the assumption on $\mc{F}'$, we must have $|F \cap F'| \ge 2$.
        In addition, Property~\ref{prop:paths} guarantees that those $j \in [k]$ with $F\cap V_j=F'\cap V_j$ form an interval.
        So, we must have $i > 1$ and $F\cap V_{i-1}=F'\cap V_{i-1}$.
        Using Properties~\ref{prop:out} and~\ref{prop:in}, the number of such sets $F'$ (agreeing with $F$ on $V_{i-1}$ and $V_i$) is at most $n^{i-2}\cdot m^{k-(i+1)}$, so the edge $uv'$ is contained in at most $n^{i-2}\cdot m^{k-(i+1)}<n^{i-3/2}$ sets in $\FF'$ for $n$ sufficiently large.
        This contradicts our assumption that $uv'$ is heavy and finishes the proof of the claim.
    \end{proof}
    Hence, the number of paths of length $k-1$ (starting at $V_1$ and ending at $V_k$) formed by heavy edges is at most $|V_1|=m^{-(k-1)}|\FF|$. 
    Therefore $|\FF'|\leq o(m^{-(k-1)}|\FF|)+m^{-(k-1)}|\FF|=(1+o(1))m^{-(k-1)}|\FF|$.
\end{proof}

\section{A new variant of the delta-system lemma}\label{section:weakfuredi}

In this section, we discuss F\"uredi's delta-system lemma and our variant (\cref{lemma: weak furedi}) which has a slightly weaker conclusion but a significantly better quantitative dependency. We start by proving \cref{lemma: weak furedi} in \cref{section: weak Furedi proof}, as well as giving a brief discussion about typical ways the delta-system method can be applied. Then, in \cref{section: furedi double exponential}, we show that F\"uredi's delta-system lemma requires a double-exponential dependency on $k$, demonstrating the necessity of sacrificing some properties guaranteed by its conclusion if we aim for stronger bounds. Finally, we discuss two new applications of F\"uredi's delta-system lemma in \cref{section: excluding one residue,section: Frankl--Wilson} (including an application for the modular clique problem, \cref{theorem: excluding one residue}). We prove these results via our \cref{lemma: weak furedi} to demonstrate that for most applications \cref{lemma: weak furedi} is sufficient, allowing us to achieve a better dependency on the uniformity $k$.

\subsection{Our new variant}\label{section: weak Furedi proof}

Let us start by proving \cref{lemma: weak furedi} using \cref{lemma: main}.

\begin{proof}[Proof of \cref{lemma: weak furedi}]
    We apply \cref{lemma: main} for $\mc{F}$ with $\ell=0$ to acquire $\mc F' \subseteq \mc F$ with $|\mc F'| \ge (8e\cdot 2^kkm)^{-k}\cdot |\mc F|$ such that the following holds.
    For any $A \subseteq [n]$ such that $0\le |A| < k$ and $A$ is not the kernel of any sunflower in $\mc{F}$ with $m$ petals, there exists an element $\varphi(A)\in [n]\setminus A$ such that any $F \in \mc{F}'$ containing $A$ must also contain $\varphi(A)$.
    
    We claim this is the desired $\mc{F}'$.
    Indeed, for each $F \in \mc{F}'$, define $$
        \II_F:=\left\{F\cap F_1\cap\dots\cap F_t: t \ge 1,  F_1,\dots,F_t \in \mc{F}'\setminus \{F\} \right\} .
    $$
    Clearly, $I(F,\mc{F}') \subseteq \II_F \subseteq 2^{F}\setminus\{F\}$ and $\II_F$ is intersection-closed.
    In addition, suppose $F \in \mc{F}'$ and $A \in \II_F$ are such that no sunflower in $\mc{F}$ with $m$ petals has kernel $A$.
    By the definition of $\II_F$, $A=F\cap F_1\cap \cdots\cap F_t$ for some $t \ge 1$ and $F_1,\dots,F_t \in \mc{F}'\setminus \{F\}$.
    The choice of $\mc{F}'$ guarantees that $\varphi(A) \in F,F_1,\dots,F_t$, contradicting that $A=F\cap F_1\cap \cdots \cap F_t$.
    This shows that \ref{item:weakfurediintersectionclosed} (and hence \ref{item:weakfuredimain}) in \cref{lemma: weak furedi} hold and completes the proof.
\end{proof}

It turns out that \cref{lemma: weak furedi} is sufficient to reprove many applications of F\"uredi's delta-system lemma, especially for problems on set systems with restricted intersections.
For example, as mentioned in the Introduction, two of the successes of F\"uredi's delta-system lemma are the proofs for the Erdős--S\'os problem (on forbidding one intersection size) by Frankl and F\"uredi~\cite{FF85}, and Chv\'atal's problem on simplices by the same authors~\cite{FF87}.
It can easily be checked that in both proofs, F\"uredi's delta-system lemma can be replaced by \cref{lemma: weak furedi} to achieve a single-exponential dependency for the size of the ground set in terms of $k$ (in their original proof, the dependency is double-exponential).
We omit the details as the required modifications are quite minor.
Instead, as mentioned above, in \cref{section: excluding one residue,section: Frankl--Wilson} we will provide two new applications of the delta-system lemma on certain set systems with restricted intersections and prove them via \cref{lemma: weak furedi}.

In recent years, significant progress has been made on the two well-known problems mentioned above, particularly in certain regimes, through the use of the 
junta method~\cite{KL21,EKL24}  and the spread approximation method~\cite{KZ24} (for the Erdős–Sós problem).
These results allow one to take the size $n$ of the ground set to be much smaller, linear in $k$, while getting precise bounds (which were proved by Frankl and Füredi when $n$ is double-exponential $k$).
Despite being successful in the range where $k$ grows ``rapidly'' with $n$, these methods have some further constraints on the third parameter (the forbidden intersection size $\ell$ in the Erd\H{o}s--S\'os problem and the dimension $d$ of the simplices in the other) and require (long) problem-specific proofs, whereas the arguments using the delta-system lemma are usually much shorter.
Additionally, these methods seem to require the extremal constructions to be juntas, families that are defined based on only few vertices in the ground set.
For certain problems, for example, the Erd\H{o}s--S\'os problem for $\ell \ge (k-1)/2$, the extremal example is conjectured to be some modified design~\cite{FF85} (which is far away from juntas), and there have been no improvements since the paper of Frankl and F\"uredi~\cite{FF85} (which used the delta-system method). We note that in this range for the Erdős--Sós problem, only the order of magnitude of the answer is known, determining the precise or asymptotical answer is open even when $n$ is much larger than $k$.

Continuing with our discussion of our delta-system lemma, in \cref{section: excluding one residue,section: Frankl--Wilson} we will demonstrate that the very useful technique of considering \emph{ranks} of intersection-closed set systems also works with our variant proved above. If $\II\subseteq 2^{X}\setminus \{X\}$ is a set system, by its rank we mean the size of the smallest subset of $X$ not contained in any member of $\II$ (i.e., not \emph{covered} by $\II$). The main observation making ranks useful is that if the rank of each $\II_F$ is at most $r$, then $|\FF'|\leq \binom{n}{r}$, since different sets $F$ must give different uncovered $r$-sets. This allows us to bound complicated extremal problems by simply considering the structures of intersection-closed set systems satisfying certain size-restrictions for their members.

\subsection{Double-exponential dependency in F\"uredi's delta-system lemma}\label{section: furedi double exponential}
In this section, we prove \cref{thm:doubleexponential}, showing that F\"uredi's delta-system lemma (\cref{lemma: furedi}) requires a $2^{-\Omega\left(2^{k}/\sqrt{k}\right)}$ dependency on $k$. Indeed, we will essentially show that Property \ref{item:Furedisame} forces this double-exponential behaviour.

The idea is to define $\mc{F}=\bigcup_{\mc{I}} \mc{F}_{\mc{I}}$ where for ``many'' intersection-closed systems $\mc{I}\subseteq 2^{[k]}$, we construct a $k$-partite set system $\mc{F}_{\mc{I}}$ (each  $\FF_\II$ on a different set of vertices) such that $\pi(I(F,\mc{F}_{\mc{I}}))=\mc{I}$ for every $F \in \mc{F}_{\mc{I}}$ (recalling \ref{item:Furediintersection-closed} and \ref{item:Furedisame} in \cref{lemma: furedi}).
Furthermore, we will make sure that this property holds robustly, i.e., if we delete some members from $\FF_\II$, then as long as a not-too-small fraction of sets remain in $\mc{F}_{\mc{I}}$, $\pi(I(F,\mc{F}_{\mc{I}}))=\mc{I}$ still holds for some $F$ that remains.
Then, if we keep a not-too-small fraction of sets in $\mc{F}=\bigcup_{\mc{I}} \mc{F}_{\mc{I}}$, some $\mc{F}_{\mc{I}}$ and $\mc{F}_{\mc{I}'}$ will witness distinct projections $\mc{I},\mc{I}'\subseteq 2^{[k]}$, contradicting \ref{item:Furedisame} in \cref{lemma: furedi}.
To make this idea work, we will consider one simple class of intersection-closed systems: those containing every set of size less than $\ell$ and some of the sets of size $\ell$; there are $2^{\binom{k}{\ell}}$ such choices.
Clearly, the best choices of $\ell$ are $\floor{k/2}$ and $\ceil{k/2}$, but for notational convenience and readability, we use $\ell$ instead of the concrete numbers.
In addition, we will index $\mc{F}_\mc{I}$ with $\mc{I}\subseteq \binom{[k]}{\ell}$ (instead of the intersection-closed set system $\II'$ obtained by adding all sets of size less than $\ell$ to this $\mc{I}$).

Formally, we will need the following statement. The only difference from the discussion above is that for simplicity, we will only focus on sunflowers of kernel size $\ell$ and not prove that all sets of size at most $\ell-1$ appear as kernels of large sunflowers -- this property is not used in the proof of \cref{thm:doubleexponential}, but with some extra work one can also deduce that from our construction.

\begin{lemma}\label{lemma: construction for each intersection-closed family}
    For all integers $k > \ell \ge 2$ and every $q \ge 2$, there exists $s=s(k,\ell,q) > 0$ such that the following holds.
    Suppose $\mc{I} \subseteq \binom{[k]}{\ell}$.    
    Then, there exists a $k$-partite set system $\mc{F}$ with parts $X^{(1)},\dots,X^{(k)}$ such that 
    \begin{enumerate}[(a)]
        \item $|\mc{F}|=s$; \label{item: property cardinality}
        \item $\II\subseteq\pi(I(F,\mc{F}))\subseteq \mc{I}\cup\left(\binom{[k]}{0}\cup\binom{[k]}{1}\cup\dots\cup\binom{[k]}{\ell-1}\right)$ for all $F\in\FF$ ($\pi$ is the projection with respect to the partition $X^{(1)},\dots,X^{(k)}$); \label{item: property projection}
        \item for every $I \in \mc{I}$, the family $\mc{F}$ can be partitioned into sunflowers with at least $q$ petals each such that every sunflower has a kernel of the form $\{x^{(i)}: i \in I\}$ with $x^{(i)} \in X^{(i)}$ for all $i \in I$ (i.e., the kernels under projection $\pi$ are all precisely $I$).
        \label{item: property decomposition sunflowers}
    \end{enumerate}
\end{lemma}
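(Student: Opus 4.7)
The plan is to give an algebraic construction using linear maps over a small field. Let $r$ be a prime power with $r \ge q$, set $s := r^{\binom{k}{\ell}}$, and take $V := \mathbb{F}_r^{\binom{[k]}{\ell}}$ with basis $\{e_I\}_{I \in \binom{[k]}{\ell}}$. For each $i \in [k]$, define the subspace
$$K_i := \spann\{e_I : I \in \mc{I},\ i \in I\} \subseteq V,$$
let $L_i : V \to V/K_i$ be the quotient map, and let $X^{(i)} := V/K_i$ (the parts are made disjoint by labelling). Then set $\mc{F} := \{\{L_1(v), \ldots, L_k(v)\} : v \in V\}$, viewed as a $k$-partite set system with projection $\pi$ onto $[k]$.

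First I would verify that distinct $v$'s give distinct tuples, that is, $\bigcap_{i \in [k]} K_i = \{0\}$. This holds because every $I \in \mc{I}$ has size $\ell < k$, so no single $I$ contains all indices $i \in [k]$, which forces the support of any $w \in \bigcap_i K_i$ to be empty. Consequently $|\mc{F}| = r^{\binom{k}{\ell}} = s$, which gives part (a); crucially this value does not depend on $\mc{I}$.

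The heart of the argument is understanding, for each nonzero $w = v - v' \in V$, the set $S(w) := \{i \in [k] : w \in K_i\}$, which coincides with $\pi(F \cap F')$ where $F, F'$ correspond to $v, v'$. A short case analysis on $\mathrm{supp}(w) \subseteq \binom{[k]}{\ell}$ shows: if $\mathrm{supp}(w) \not\subseteq \mc{I}$ then $S(w) = \emptyset$; if $\mathrm{supp}(w) = \{I\}$ with $I \in \mc{I}$ then $S(w) = I$; and otherwise $S(w) = \bigcap_{I \in \mathrm{supp}(w)} I$ has size at most $\ell - 1$ since any two distinct $\ell$-sets meet in fewer than $\ell$ elements. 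This yields the inclusion $\pi(I(F,\mc{F})) \subseteq \mc{I} \cup \bigcup_{j < \ell} \binom{[k]}{j}$, and plugging $w = e_I$ for each $I \in \mc{I}$ gives the reverse inclusion, establishing (b).

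For (c), given $I \in \mc{I}$ I would partition $V$ into cosets of $\spann\{e_I\}$, each of cardinality $r \ge q$. Within a single coset, any two members differ by some $c \cdot e_I$ with $c \ne 0$, and $S(c \cdot e_I) = S(e_I) = I$ (using that $e_I \in K_i$ if and only if $i \in I$, since $I \in \mc{I}$). Hence the $r$ corresponding tuples in $\mc{F}$ pairwise agree exactly on the coordinates in $I$, so they form a sunflower with $r$ petals whose kernel has the required form $\{L_i(v) : i \in I\}$. The main conceptual hurdle is that the lemma forces $s$ to depend only on $k, \ell, q$, not on $|\mc{I}|$: this is resolved by giving $V$ the full basis $\{e_I\}_{I \in \binom{[k]}{\ell}}$, so that basis vectors with $I \notin \mc{I}$ are present but, by the definition of $K_i$, lie in no $K_i$, contributing only trivial intersection patterns and keeping the construction uniform across choices of $\mc{I}$.
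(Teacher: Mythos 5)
Your proof is correct and gives a cleaner construction than the paper's. The paper first proves a technical lemma (Lemma~\ref{lem:lemmafordoubleexponential}) constructing $d$-dimensional subspaces $V^{(1)},\dots,V^{(k)}$ of $\mathbb{F}_p^{\ell d}$ in ``general position'' with prescribed rank drops encoding $\mc{I}$, and then defines $\mc{F}$ to consist of $d$-tuples of vectors related to the $v^{(i)}_j$ by a common invertible matrix; the size $s$ becomes the order of $\mathrm{GL}_{\ell d}(\mathbb{F}_p)$. Your approach instead works with $V=\mathbb{F}_r^{\binom{[k]}{\ell}}$ directly, defining $K_i=\spann\{e_I: I\in\mc{I},\,i\in I\}$ and $\mc{F}=\{(L_1(v),\dots,L_k(v)):v\in V\}$ via the quotient maps $L_i:V\to V/K_i$. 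This makes the intersection pattern of $F_v$ and $F_{v'}$ literally equal to $\{i: v-v'\in K_i\}$, reducing the analysis to a transparent support computation and eliminating both the general-position argument and the group-theoretic parameterization. Your partition for part (c) into cosets of $\spann\{e_I\}$ is similarly direct where the paper must argue via a dimension count that the sets through a given kernel number exactly $p^{\ell d}-p^{\ell d-1}$. Both proofs yield $s$ depending only on $k,\ell,q$, but yours is more economical; the trade-off (not relevant here) is that the paper's construction produces a symmetric family where all $\pi(I(F,\mc{F}))$ are literally identical, whereas yours guarantees only the two containments required by (b) — which is all the lemma asks.
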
    

Before we prove \cref{lemma: construction for each intersection-closed family}, we explain how it implies \cref{thm:doubleexponential}.

\begin{proof}[Proof of~\cref{thm:doubleexponential}]
    Write $\ell = \ceil{k/2} \in [2,k-1]$, and fix $q$ to be a sufficiently large integer in terms of $k$, specified later.
    For each $\mc{I} \subseteq \binom{[k]}{\ell}$ (and the chosen value of $q$), let $\mc{F}_{\mc{I}}$ be the family provided by \cref{lemma: construction for each intersection-closed family} with $k$ parts $X^{(1)}_\mc{I},\dots,X^{(k)}_\mc{I}$.
    We will assume that $\bigcup_{i=1}^k X^{(i)}_\mc{I}$ is disjoint from $\bigcup_{i=1}^k X^{(i)}_{\mc{I}'}$ for every distinct $\mc{I},\mc{I'}$.
    This means no two sets from distinct $\mc{F}_{\mc{I}}$ will contribute to intersections we are interested in, i.e., those of size precisely $\ell$.
    In addition, note that \cref{lemma: construction for each intersection-closed family} guarantees the existence of some integer $s$ that $|\mc{F}_{\mc{I}}| = s$ for all $\mc{I}$.

    Now, take $\FF$ to be the union of all set systems $\FF_\mc{I}$ with $\mc{I}\subseteq \binom{[k]}{\ell}$, so $|\FF|=s\cdot2^{\binom{k}{\ell}}$.
    Assume that $\FF'\subseteq \FF$ satisfies the conclusion of \cref{lemma: furedi} with $|\FF'|\geq \alpha_{k,m}|\FF|$ (our goal is to show that we must have $\alpha_{k,m} \leq k!\cdot2^{-\binom{k}{\ell}}=k!\cdot 2^{-\binom{k}{\ceil{k/2}}}$).
    In particular, $\FF'$ is $k$-partite, say with partite classes $Y^{(1)},\dots,Y^{(k)}$, and there exists $\mc{I}^\star\subseteq 2^{[k]}\setminus \{[k]\}$ such that $\pi(I(F,\FF'))=\mc{I}^\star$ for all $F\in \FF'$.
    Recall that $I(F,\FF') = \{F\cap F': F'\in \FF'\setminus \{F\}\}$ and $\pi$ denotes projection with respect to the partition $Y^{(1)},\dots,Y^{(k)}$. 

    For each $\mc{I}$, we partition $\mc{F}_\mc{I}\cap \mc{F}'$ based on the correspondence between the classes $X^{(1)}_\mc{I},\dots,X^{(k)}_\mc{I}$ and $Y^{(1)},\dots,Y^{(k)}$ as follows.
    For each permutation $\sigma: [k] \to [k]$, define
    $$ 
        \mc{F}_{\mc{I}}'(\sigma)
        = \left\{ \{x^{(1)},\dots,x^{(k)}\} \in \mc{F}_{\mc{I}} \cap \mc{F}': x^{(i)}\in X^{(i)}_{\mc{I}} \cap Y^{(\sigma(i))} \textnormal{ for all } i \in [k] \right\}. 
    $$       
    Note that $\bigcup_\mc{I}\bigcup_\sigma \FF_\mc{I}'(\sigma)=\FF'$.
    By the pigeonhole principle, there exists some $\sigma$ with $|\bigcup_\mc{I}\FF_\mc{I}'(\sigma)|\geq|\FF'|/k!$. 
    By relabelling $Y^{(1)},\dots,Y^{(k)}$, we may assume that $\sigma=\mathrm{id}$ is the identity permutation. Hence, writing $\FF_\II'':=\mc{F}_\mc{I}'(\mathrm{id})$, we have $|\bigcup_\II \FF_\II''|\geq |\FF'|/k!$ and
    $$ 
        \mc{F}_{\mc{I}}'' =\mc{F}_\mc{I}'(\mathrm{id})       = \left\{ \{x^{(1)},\dots,x^{(k)}\} \in \mc{F}_{\mc{I}} \cap \mc{F}': x^{(i)}\in X^{(i)}_{\mc{I}} \cap Y^{(i)} \textnormal{ for all } i \in [k] \right\}.
    $$           

    Let $\mc{I}^\star_0 = \mc{I}^\star\cap \binom{[k]}{\ell}$. 
    We claim that whenever $\II$ is such that $\mc{F}_\mc{I}''\neq \emptyset$, then $\mc{I}^\star_0 \subseteq \mc{I}$, i.e., the intersection pattern of $\mc{F}'$ is a subfamily of that of $\mc{F}_\mc{I}$.
    To see this, fix some $F \in \mc{F}_\mc{I}''$ and take any $A \in \mc{I}_0^\star$.
    Recalling that $\pi(I(F,\mc{F}'))=\mc{I}^\star$, there exists some $F' \in \mc{F}'$ such that $\pi(F\cap F')=A$.   
    This $F'$ must be in $\mc{F}_\mc{I}$ as $A$ is non-empty.
    By the definition of $\mc{F}_\mc{I}''$, the projection of $F\cap F'$ with respect to $X^{(1)}_\mc{I},\dots,X^{(k)}_\mc{I}$ is precisely $A$, so \ref{item: property projection} of \cref{lemma: construction for each intersection-closed family} implies $A \in \mc{I}$.
    The claim then follows by considering all $A \in \mc{I}^\star_0$.    

    Moreover, for every $\mc{I}\subseteq \binom{[k]}{\ell}$ with $\mc{F}_\mc{I}''\neq \emptyset$, as long as $\mc{I} \neq \mc{I}_0^\star$, there exists $I \in \mc{I}\setminus \mc{I}^\star_0$.
    According to \ref{item: property decomposition sunflowers} of \cref{lemma: construction for each intersection-closed family}, $\mc{F}_\mc{I}$ can be partitioned into sunflowers with $q$ petals such that the kernel of each of these sunflowers intersects precisely the parts $X^{(i)}_\II$, $i \in I$.
    However, since $\mc{F}_\mc{I}''\subseteq \mc{F}'$ and $I\not\in\II_0^\star$, each of these sunflowers have at most one petal in $\mc{F}_\mc{I}''$ (once again using the definition of $\FF_\II''$).
    In particular, $|\mc{F}_\mc{I}''| \le |\mc{F}_\mc{I}|/q=s/q$.
    In total, the cardinality of $\bigcup_\mc{I}\FF_\mc{I}''$ is 
    \[
        \Big|\bigcup_\mc{I}\FF_\mc{I}''\Big| = \sum_{\mc{I}}|\mc{F}_\mc{I}''| = |\mc{F}_{\mc{I}^\star_0}''| + \sum_{\mc{I}\neq \mc{I}^\star_0} |\mc{F}_\mc{I}''|
        < s + 2^{\binom{k}{\ell}} \cdot \frac{s}{q}.
    \]
    Recall that $\big|\bigcup_\mc{I}\FF_\mc{I}''\big| \ge|\mc{F}'|/k! \ge\alpha_{k,m} \cdot |\mc{F}|/k!=\alpha_{k,m} \cdot s\cdot 2^{\binom{k}{\ell}}/k!$.
    We acquire $\alpha_{k,m} < k!2^{-\binom{k}{\ell}} + k!/q$.
    Since this must hold for all $q$, by taking $q\to\infty$, we get $\alpha_{k,m} \le k!\cdot 2^{-\binom{k}{\ell}}=k!\cdot 2^{-\binom{k}{\ceil{k/2}}}$, as desired.
\end{proof}

Thus, it is left to construct the families $\FF_\II$ as in \cref{lemma: construction for each intersection-closed family}. Our construction for $\mc{F}_{\mc{I}}$ is based on linear spaces satisfying certain properties, and will require the following technical lemma.

\begin{lemma}\label{lem:lemmafordoubleexponential}
    For all integers $k>\ell\geq 2$, there exist positive integers $p_0, d$ such that whenever $p\geq p_0$ is a prime and $\mathcal{I}\subseteq \binom{[k]}{\ell}$, then there exist $d$-dimensional subspaces $V^{(1)},\dots,V^{(k)}$ of $\mathbb{F}_p^{\ell d}$ such that the following properties hold.
    \begin{enumerate}[(i)]
       \item Whenever $I\in\binom{[k]}{\ell} \setminus \mc{I}$, then $\dim\sum_{i\in I} 
       V^{(i)}=\ell d$, i.e., $\mathbb{F}_p^{\ell d}\label{item: basis}=\bigoplus_{i\in I} V^{(i)}$.
        \item Whenever $I\in \mc{I}$, then $\dim\sum_{i\in I} V^{(i)}=\ell d-1$.\label{item: not basis}
        \item Whenever $I\in\binom{[k]}{\ell+1}$, then $\dim\sum_{i\in I} V^{(i)}=\ell d$, i.e., $\mathbb{F}_p^{\ell d}=\sum_{i\in I} V^{(i)}$. \label{item:span}       
    \end{enumerate}
\end{lemma}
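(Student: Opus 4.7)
The plan is to construct the subspaces via a hyperplane-and-generic-subspace scheme parametrised by $\mathcal{I}$. Fix $d$ large enough that $d(\ell-1) \geq \binom{k-1}{\ell-1}$. For each $I \in \mathcal{I}$, pick a normal vector $n_I \in \mathbb{F}_p^{\ell d}$ with generic entries and set $H_I = n_I^{\perp}$. For each $i \in [k]$, define $K^{(i)} = \bigcap_{I \in \mathcal{I},\, i \in I} H_I$; for generic $n_I$'s this has codimension $r_i := |\{I \in \mathcal{I} : i \in I\}| \leq \binom{k-1}{\ell-1}$, hence dimension at least $d$. Finally, let each $V^{(i)}$ be a generic $d$-dimensional subspace of $K^{(i)}$.

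The three conditions would then be verified by combining a duality computation of $\dim \sum_{i \in I} K^{(i)}$ with the standard generic-subspace formula
$$\dim \sum_{i \in I} V^{(i)} \;=\; \min\!\bigl(|I|\, d,\; \dim \sum_{i \in I} K^{(i)}\bigr).$$
For the duality I would note that $\bigl(\sum_{i \in I} K^{(i)}\bigr)^{\perp} = \bigcap_{i \in I} \spann(n_{I'} : I' \in \mathcal{I},\, I' \ni i)$, and that for generic $n_{I'}$'s a combination $\sum c_{I'} n_{I'}$ lies in this intersection only if $c_{I'} = 0$ whenever $I \not\subseteq I'$. Since every $I' \in \mathcal{I}$ has size $\ell$, this forces the following. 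When $|I| = \ell$ and $I \notin \mathcal{I}$, no admissible $I'$ exists, so $\sum K^{(i)} = \mathbb{F}_p^{\ell d}$, giving \textrm{(i)}. When $|I| = \ell + 1$, the same holds because $|I'| = \ell < \ell + 1$, giving \textrm{(iii)}. When $I \in \mathcal{I}$, the only admissible $I'$ is $I$ itself, so $\sum K^{(i)} = H_I$ has dimension $\ell d - 1$; combined with the formula above this yields \textrm{(ii)}.

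To upgrade ``generic'' to an actual statement over $\mathbb{F}_p$ with $p \geq p_0(k, \ell)$, I would observe that every claim used above corresponds to the non-vanishing of a polynomial (a suitable minor of an explicit block matrix) in the entries of the $n_I$'s and in a chosen basis of each $V^{(i)}$. Since there are only finitely many subsets $I$ to consider and the associated polynomials have degree bounded in terms of $k$ and $\ell$, the Schwartz--Zippel lemma supplies a valid choice of parameters as soon as $p$ exceeds some threshold $p_0$ depending only on $k$ and $\ell$.

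The main obstacle I expect is justifying the generic-subspace formula simultaneously for all subsets $I$: one must show each ``bad'' locus is a proper subvariety of the joint parameter space by exhibiting at least one explicit parameter value (e.g.\ over the algebraic closure) realising the predicted rank for that $I$, and then taking a product over the finitely many $I$'s. Once this bookkeeping is carried out carefully, the three dimension equalities of the lemma hold over $\mathbb{F}_p$ for all sufficiently large primes $p$.
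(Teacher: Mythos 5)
Your hyperplane/generic-subspace construction is a genuinely different route from the paper's (the paper works directly with a block decomposition of $\mathbb{F}_p^{\ell d}$ indexed by $\binom{[k]}{\ell}$, choosing explicit vectors in each block in general position, with one forced dependency inserted per $I\in\mathcal{I}$). The dual viewpoint you propose is appealing, but as written it has two real gaps.

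First, the ``standard generic-subspace formula''
\[
\dim\sum_{i\in I}V^{(i)}=\min\Bigl(|I|\,d,\ \dim\sum_{i\in I}K^{(i)}\Bigr)
\]
is false in general. For a generic $d$-dimensional $V^{(i)}\subseteq K^{(i)}$, the correct statement is the matroid-union/polymatroid rank formula
\[
\dim\sum_{i\in I}V^{(i)}=\min_{J\subseteq I}\Bigl[(|I|-|J|)\,d+\dim\sum_{i\in J}K^{(i)}\Bigr],
\]
and the minimum can be attained at a proper nonempty $J\subsetneq I$. A concrete counterexample: in $\mathbb{F}^4$ take $d=1$, $K^{(1)}=K^{(2)}=K^{(3)}=\spann(e_1,e_2)$, $K^{(4)}=\spann(e_3,e_4)$; then $\min(4d,\dim\sum K^{(i)})=4$, but three generic lines in a plane only span a plane, so the true dimension is $3$. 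To make your two-term formula give the right answer you must separately verify, for every relevant $I$, that $(|I|-|J|)\,d\ge |\{I'\in\mathcal{I}:I'\supseteq J\}|$ for all $\emptyset\ne J\subseteq I$. This is precisely where the structure of $\mathcal{I}$ enters, and it is not automatic.

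Second, the choice $d(\ell-1)\ge\binom{k-1}{\ell-1}$ is too small. Your duality step $\bigl(\sum_{i\in J}K^{(i)}\bigr)^\perp=\spann(n_{I'}:I'\supseteq J)$ with $\dim=|\{I'\supseteq J\}|$ relies on the $n_{I'}$'s, $I'\in\mathcal{I}$, being linearly independent, which forces $|\mathcal{I}|\le\ell d$, i.e.\ $d\ge\binom{k}{\ell}/\ell$. Since $\binom{k}{\ell}/\ell=\tfrac{k}{\ell^2}\binom{k-1}{\ell-1}$ exceeds $\binom{k-1}{\ell-1}/(\ell-1)$ once $k>\ell^2/(\ell-1)$, your stated $d$ already fails at this step for all large $k$. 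Taking $d=\binom{k}{\ell}$ (as the paper does) repairs both issues: it guarantees linear independence of the normals, and with this $d$ one checks $(\ell-j)\binom{k}{\ell}\ge\binom{k-j}{\ell-j}$ for all $1\le j\le\ell-1$, so the extra terms in the matroid-union formula are never binding and your intended conclusion does follow for each of conditions (i)--(iii). With those two fixes the Schwartz--Zippel step you describe completes the argument, giving an alternative proof to the paper's explicit block construction.
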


Once again, before proving \cref{lem:lemmafordoubleexponential}, we use it construct the families in \cref{lemma: construction for each intersection-closed family}.

\begin{proof}[Proof of \cref{lemma: construction for each intersection-closed family}]
    Fix some prime $p$ that is sufficiently large in terms of $k$ and $q$, and let $d \in \mb{N}$ and the $d$-dimensional subspaces $V^{(1)},\dots,V^{(k)}\subset \mb{F}_p^{\ell d}$ be as in \cref{lem:lemmafordoubleexponential} for $\mc{I}$. 
    For each $i\in[k]$, pick a basis $v_1^{(i)},v_2^{(i)},\dots,v_d^{(i)}$ of $V^{(i)}$. 
    Let $X^{(1)},\dots,X^{(k)}$ be $k$ disjoint copies of $(\mb{F}_{p}^{\ell d})^d$, thus, every member $x^{(i)} \in X^{(i)}$ is a $d$-tuple of vectors in $\mb{F}_{p}^{\ell d}$. We write $x^{(i)}_j\in \mathbb{F}_p^{\ell d}$ for the $j$th vector in this $d$-tuple $x^{(i)}$.
    
    Now, we construct a $k$-partite set system $\mc{F}$ on the $k$ parts $X^{(1)},\dots, X^{(k)}$ as follows. Let $x^{(1)}\in X^{(1)},\dots,x^{(k)}\in X^{(k)}$ be $k$ elements of our ground set (so each $x^{(i)}$ is a $d$-tuple of vectors).
    Informally, these $k$ elements form a member in our family $\FF$ precisely when the $kd$ vectors $x^{(i)}_j$ appearing in the $d$-tuples satisfy the same linear dependency relations as the vectors $v^{(i)}_j$. 
    That is, formally, the vertices $x^{(1)},\dots,x^{(k)}$ form a set in $\FF$ if and only if there is an invertible $\ell d$-by-$\ell d$ matrix $A$ such that $x^{(i)}_j= Av^{(i)}_j$ for all $i\in[k], j\in[d]$.
    We will call this $A$ the {\em defining matrix} of the set $\{x^{(1)},\dots,x^{(k)}\}$.
    Since the vectors $(v_j^{(i)})_{i\in[k],j\in[d]}$ span $\mathbb{F}_p^{\ell d}$, there is a one-to-one correspondence between the sets in $\mc{F}$ and the defining matrices $A$.
    Since the number of invertible $\ell d$-by-$\ell d$ matrices in $\mb{F}_p$ is $\prod_{i=1}^{\ell d} (p^{\ell d}-p^{i-1})$, by taking $s:=\prod_{i=1}^{\ell d}(p^{\ell d}-p^{i-1})$, we have $|\mc{F}|=s$.
    Then, \ref{item: property cardinality} holds.
    
    Suppose $I\in\binom{[k]}{\ell} \setminus \mc{I}$ and consider $\{x^{(i)}\}_{i\in I}$ with $x^{(i)}\in X^{(i)}$, we show that $\{x^{(i)}\}_{i\in I}$ cannot appear as the intersection of two sets in $\FF$.
    Indeed, by \ref{item: basis} of \cref{lem:lemmafordoubleexponential}, the vectors $\{v_j^{(i)}\}_{i\in I, j\in [d]}$ span the entire space $\mathbb{F}_p^{\ell d}$ and therefore
    the defining matrix $A$ of any set in $\FF$ containing $\{x^{(i)}\}_{i\in I}$ is determined by the conditions $\big\{x_j^{(i)}=Av_j^{(i)}\big\}_{i\in I, j\in [d]}$. Hence the $\ell$-element set $\{x^{(i)}\}_{i\in I}$ is contained in at most one member of $\FF$. 
    Similarly, using~\ref{item:span}, we can show that no set of size greater than $\ell$ appears as the intersection of distinct sets in $\mc{F}$.
    So, $\pi(I(F,\mc{F})) \subseteq \mc{I}\cup\left(\binom{[k]}{0}\cup\binom{[k]}{1}\cup\dots\cup\binom{[k]}{\ell-1}\right)$ holds for all $F \in \mc{F}$; this is the second half of \ref{item: property projection}.
    The first half of \ref{item: property projection} follows directly from \ref{item: property decomposition sunflowers}.
    
    Hence, it suffices to show \ref{item: property decomposition sunflowers}.
    To this end, we fix $I\in\mc I$ in the rest of the proof.
    Let $\{x^{(i)}\}_{i\in I}$ be arbitrary where each $x^{(i)}\in X^{(i)}$ and $\{x^{(i)}\}_{i\in I}\subseteq F$ for some $F\in\FF$.
    We claim that there are precisely $p^{\ell d}-p^{\ell d-1}$ sets $E\in\FF$ that contain $\{x^{(i)}\}_{i\in I}$.
    Since we already know that no two sets in $\FF$ intersect in more than $\ell$ elements, these sets $E$ form a sunflower of kernel $\{x^{(i)}\}_{i\in I}$ and with $p^{\ell d}-p^{\ell d-1}\ge q$ petals by taking $p$ large enough.
    In particular, this proves \ref{item: property decomposition sunflowers} by listing all possible $\{x^{(i)}\}_{i\in I}$ and the corresponding sunflowers.

    We are left to prove the above claim.
    Recall that we fix $I \in \mc{I}$ and
    $\{x^{(i)}\}_{i\in I}$ where each $x^{(i)}\in X^{(i)}$ and $\{x^{(i)}\}_{i\in I}\subseteq F$ for some $F \in \mc{F}$.
    Then, \ref{item: not basis} of \cref{lem:lemmafordoubleexponential} implies that the vectors $(v^{(i)}_j)_{i\in I,j\in[d]}$ span an $(\ell d-1)$-dimensional subspace of $\mb{F}_p^{\ell d}$.
    Hence, using that $\{x^{(i)}\}_{i\in I}\subseteq F$ for some set $F \in \mc{F}$, we know that the vectors $(x^{(i)}_j)_{i\in I,j\in[d]}$ also span an $(\ell d-1)$-dimensional subspace of $\mb{F}_p^{\ell d}$ (since there is an invertible matrix $A$ such that $x^{(i)}_j=A v^{(i)}_j$ for all $i\in I,j\in[d]$.)
     Fix any $i' \in [k]\setminus I$. 
    By \ref{item:span} of \cref{lem:lemmafordoubleexponential}, there exists $j' \in [d]$ such that 
    $(v^{(i)}_j)_{i\in I,j\in[d]}$ together with $v^{(i')}_{j'}$ span the entire space $\mathbb{F}_p^{\ell d}$.
    
    Consider any $E \in \mc{F}$ such that $E$ contains $\{x^{(i)}\}_{i \in I}$. Thus, writing $E=\{y^{(i)}\}_{i\in [k]}$ (where $y^{(i)}\in X^{(i)}$ for all $i$), we have $y^{(i)}=x^{(i)}$ for all $i\in I$.
    We know there exists some invertible matrix $A \in \mb{F}_p^{\ell d\times \ell d}$ such that
    $x^{(i)}_j=Av^{(i)}_j$ for all $i\in I,j\in [d]$, and $y^{(i')}_{j'}=Av^{(i')}_{j'}$.
    Moreover, by the dimension considerations above, given $(x^{(i)})_{i\in I}$, the matrix $A$ (and hence the set $E$) is uniquely determined by the vector $y^{(i')}_{j'}=Av^{(i')}_{j'}$. By the invertibility assumption on $A$, we must have $y^{(i')}_{j'}\in\mb{F}_p^{\ell d}\setminus \spann\{x^{(i)}_j: i\in I,j\in [d]\}$, and each such $y^{(i')}$ gives a unique invertible matrix $A$ (and corresponding set $E$) satisfying the equalities mentioned above.
    Hence, precisely $p^{\ell d}-p^{\ell d-1}$ sets in $\mc{F}$ contain $\{x^{(i)}\}_{i \in I}$ (one set for each choice of $y^{(i')}_{j'}$).
    This shows the claim, and hence, completes the proof of this lemma.
\end{proof}

Finally, we prove the technical linear algebraic lemma we used.

\begin{proof}[Proof of \cref{lem:lemmafordoubleexponential}]
    We will assume $p_0$ is sufficiently large in terms of $k$ whenever needed.
    Let $d=\binom{k}{\ell}$ and consider the $\ell d$-dimensional vector space $\left(\mathbb{F}_p^{\ell}\right)^{\binom{[k]}{\ell}}$, where each element is a $\binom{k}{\ell}$-tuple of vectors labelled by the sets $I\in\binom{[k]}{\ell}$, with each vector in the tuple belonging to $\mathbb{F}_p^{\ell}$. (Note that this space is isomorphic to $\mathbb{F}_p^{\ell d}$.)
    For each $i\in[k]$, we will define $d$ vectors $v^{(i)}_I\in \left(\mathbb{F}_p^{\ell}\right)^{\binom{[k]}{\ell}}$, one for each $I\in\binom{[k]}{\ell}$, and the subspaces as in the statement of the lemma will be obtained by taking $V^{(i)}$ to be the span of $(v_I^{(i)})_{I\in\binom{[k]}{\ell}}$ (for some appropriately chosen vectors $v_I^{(i)}$). Thus, we now explain how we pick the vectors $v^{(i)}_I$.
    Each $v_I^{(i)}$ is of the form $(0,0,\dots,0,w_I^{(i)},0,\dots,0)$, where the only non-zero vector appears in the position labelled by $I$ and all the remaining $d-1$ coordinates are $0\in \mathbb{F}_p^{\ell}$. 
    Moreover, the vectors $(w_I^{(i)})_{i\in[k],I\in\binom{[k]}{\ell}}$ are defined as follows. 
    
    If $I\in \binom{[k]}{\ell}\setminus \mc{I}$, then the vectors $(w_I^{(i)})_{i\in[k]}$ are chosen to be $k$ arbitrary vectors in general position in $\mathbb{F}_p^{\ell}$, i.e., any $\ell$ of them are linearly independent.
    Note that this is possible if $p\ge p_0$ is large enough. 
    
    To define the vectors $(w_I^{(i)})_{i\in[k]}$ in the case $I\in \II$, choose an arbitrary element $a\in I$, take vectors $(w_I^{(i)})_{i\in[k]\setminus\{a\}}$ arbitrarily in general position in $\mathbb{F}_p^{\ell}$, and let $w_I^{(a)}$ be an arbitrary vector contained in the linear span of $\{w_I^{(i)}:i\in I\setminus \{a\}\}$ but not in the span of $\{w_I^{(i)}:i\in I_0\}$ whenever $I_0\in \binom{[k]\setminus \{a\}}{\ell-1}$ with $I_0\not =I\setminus\{a\}$. 
    Since any two different $(\ell-1)$-dimensional subspaces have intersection of dimension at most $\ell-2$ and $p$ (the size of the field) is large enough, this is again possible. 

    By the constructions above, we see that for $i_1,\dots,i_{\ell}\in[k]$ distinct, the $\ell$ vectors $w_{I}^{(i_1)},\dots,w_{I}^{(i_{\ell})}$ are linearly dependent if and only if $I\in\II$ and $I=\{i_1,\dots,i_{\ell}\}$. For each $i\in[k]$, take $V^{(i)}$ to be the span of $(v_I^{(i)})_{I\in\binom{[k]}{\ell}}$. 
    Using the observation above, and the fact that in each $d$-tuple $v^{(i)}_I$ only the entry labelled by $I$ is non-zero, it is easy to check that these subspaces satisfy the required conditions.
\end{proof}

\subsection{Set systems excluding one residue class}\label{section: excluding one residue}
As mentioned before, for set systems with modular restrictions, the Frankl--Wilson theorem states that if $p$ is a prime and $L\subseteq \zpz$ with $s:=|L|\leq k$ and $k\textrm{ mod }p\not \in L$, then whenever $\FF\subseteq \binom{[n]}{k}$ is such that every pair has intersection size in $L$ mod $p$, the cardinality  $|\FF|\leq \binom{n}{s}$.
Although this result is very useful for many applications, it does not tell us anything when $k\textrm{ mod } p \in L$. In this section, we will consider upper bounds in the setting where  the residue $r=k\!\!\mod p, r \in[0,p-1]$ is an element of $L$.

In the simplest case where $L$ is the singleton residue class containing $r$, we have a trivial lower bound $\binom{\floor{(n-r)/p}}{\floor{k/p}}$ for the largest family of $k$-sets over $[n]$ for which all pairwise intersections have size in $L$ (mod $p$).
For example, when $r=0$, one can partition a subset of $[n]$ of size $p\floor{n/p}$ into $\floor{n/p}$ disjoint $p$-tuples and take only the sets consisting of $k/p$ such $p$-tuples.
For this case, the Ray-Chaudhuri--Wilson theorem\footnote{The Ray-Chaudhuri--Wilson theorem states that if $\mc{F} \subseteq \binom{[n]}{k}$ is such that every pair in $\mc{F}$ has intersection size in a set $L_0\subseteq[0,k-1]$ of size $s$, then $|\mc{F}|\leq \binom ns$.} gives an upper bound of the same order of magnitude, and, in fact, Frankl and Tokushige~\cite{FT16eventown} showed that the exact value of the lower bound $\binom{\floor{(n-r)/p}}{\floor{k/p}}$ is tight as long as $n$ is sufficiently large with respect to $k$.
To the best of our knowledge, no nontrivial result is known for general $L\subsetneqq \zpz$ with $k\textrm{ mod } p\in L$.
Somewhat surprisingly, we are able to show that the answer is always at most $n^{k/p+O(p^2)}$ and at least $n^{k/p+\Omega(p)}$.
Note that to obtain the upper bound, it is enough to consider the case when $|L|=p-1$, i.e., when there is only one forbidden residue class. Thus, this problem can be seen as a modular version of the Erd\H{o}s--S\'os problem on forbidding one intersection size.
In a later section, we will also use the result below to prove upper bounds on $r(k,L,m)$ in the modular case $L\subseteq \zpz$ (see \cref{theorem: modular clique upper bound}).

For convenience, we state the lower-bound constructions below when $k$ is a multiple of $p$, but essentially the same proof gives similar constructions in the general setting. 
The upper bound holds for every $k$.
\begin{theorem}\label{theorem: excluding one residue}
    Let $p$ be a prime, $a \in [p-1]$ and $k$ a positive integer.
    Suppose $\mc{F}\subseteq \binom{[n]}{k}$ contains no $F,F' \in \mc{F}$ with $|F\cap F'|\equiv a \modp{p}$.
    Then $|\mc{F}|= O_k\left(n^{\lceil k/p\rceil+(p-1)(p-2)}\right)$.
    Furthermore, if $k$ is a multiple of $p$, the largest such family has cardinality at least $\Omega_k\left(n^{k/p+p-2}\right)$ if $a=p-1$ and at least $\Omega_k\left(n^{k/p+p-3}\right)$ if $k \ge 2p$.
\end{theorem}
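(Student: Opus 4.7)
The upper bound should follow by combining our delta-system lemma (\cref{lemma: weak furedi}) with a rank bound on intersection-closed families avoiding a residue class of sizes. The plan is to apply \cref{lemma: weak furedi} to $\FF$ with $m=2$ to obtain a subfamily $\FF'\subseteq \FF$ with $|\FF'|=\Omega_k(|\FF|)$ and, for each $F\in\FF'$, an intersection-closed family $\II_F\subseteq 2^F\setminus\{F\}$ containing $I(F,\FF')$ such that every element of $\II_F$ is the kernel of a sunflower in $\FF$. Since no pair in $\FF$ has intersection of size $\equiv a \pmod p$, every $I\in \II_F$ satisfies $|I|\not\equiv a \pmod p$. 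Given the rank bound below, for each $F\in\FF'$ pick $R_F\subseteq F$ with $|R_F|\le r:=k/p+(p-1)(p-2)$ and $R_F\not\subseteq I$ for all $I\in\II_F$; the map $F\mapsto R_F$ is injective, because $R_F=R_{F'}$ with $F\ne F'$ would force $R_F\subseteq F\cap F'\in I(F,\FF')\subseteq \II_F$, a contradiction. Hence $|\FF|=O_k(|\FF'|)=O_k(n^{r})$.

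The main obstacle is the rank lemma: every intersection-closed $\II\subseteq 2^{[k]}\setminus\{[k]\}$ with no member of size $\equiv a\pmod p$ has some $R\subseteq[k]$ of size at most $k/p+(p-1)(p-2)$ contained in no member of $\II$. My plan is to proceed by induction on $k$: if some element $x$ lies in every maximal member of $\II$, one can contract $x$ and apply induction on $k-1$ (with $a$ shifted to $a-1\pmod p$). Otherwise, the intersection-closure together with the modular constraint restricts how the maximal elements can sit relative to each other, and a careful structural/greedy argument then produces a small hitting set for their complements, with the additive $(p-1)(p-2)$ term arising from the up-to-$p-1$ allowed residues one may traverse along a maximal chain.

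For the lower bound, the plan is to use a blowup construction. Let $X\subseteq[n]$ be a "flexible pool" of size $\Theta(n)$, and partition $[n]\setminus X$ into groups $G_1,\dots,G_N$ of size $p$ each. All members of the family will be of the form $F=T\cup Y\cup\bigcup_{i\in I}G_i$ with $T\subseteq X$ a fixed "header", $Y\subseteq X\setminus T$ a "flexible tail" drawn from a family $\mc Y$ to be chosen, and $I$ a subset of group indices. Since each $|G_i|=p$, we have $|F\cap F'|\equiv |T|+|Y\cap Y'|\pmod p$, so by choosing $|T|$ so that $|Y|$ is short enough that only one value in $[0,|Y|]$ is congruent to $a-|T|\pmod p$, the modular restriction on $\FF$ reduces to a single forbidden intersection size $\ell$ for $\mc{Y}$. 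The Frankl--Füredi result on the Erd\H{o}s--S\'os problem then provides $\mc{Y}$ of size $\Theta(n^{\max(\ell,|Y|-\ell-1)})$.

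Concretely, for $a=p-1$ I would take $|T|=0$, $|I|=k/p-1$, $|Y|=p$, and $\mc Y$ a family of $p$-subsets of $X$ with no pair intersecting in $p-1$ elements, of size $\Theta(n^{p-1})$; this produces $\Theta(n^{k/p-1}\cdot n^{p-1})=\Theta(n^{k/p+p-2})$ sets, and all pairwise intersections lie in residues $\{0,1,\dots,p-2\}\pmod p$. For $a\in[1,p-2]$ with $k\ge 2p$, take $|T|=a+1$, $|I|=k/p-2$, $|Y|=2p-a-1$, and $\mc Y$ a family of $(2p-a-1)$-subsets of $X\setminus T$ with no pair intersecting in $p-1$ elements, of size $\Theta(n^{p-1})$; a direct calculation shows $(a+1)+|Y\cap Y'|\equiv a\pmod p$ iff $|Y\cap Y'|=p-1$ (the only value in $[0,2p-a-1]$ with this residue, since $a\ge 1$), yielding $\Theta(n^{k/p-2}\cdot n^{p-1})=\Theta(n^{k/p+p-3})$ sets, as required.
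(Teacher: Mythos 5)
Your lower bound construction is essentially the paper's, modulo a small arithmetic fix: the paper takes $\mc{F}_2\subseteq\binom{X_2\setminus X_3}{2p-a}$ which would make $|F|=(k-2p)+(2p-a)+(a+1)=k+1$; your choice $|Y|=2p-a-1$ gives $|F|=k$ as intended, and your residue check that $(a+1)+|Y\cap Y'|\equiv a\pmod p$ forces $|Y\cap Y'|=p-1$ (since $2p-1>2p-a-1$ when $a\ge 1$) is correct. The $a=p-1$ case also matches the paper exactly.

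Your upper bound, however, has a genuine gap where the hard work lies. The reduction via \cref{lemma: weak furedi} to a rank bound on intersection-closed families $\II_F\subseteq 2^F\setminus\{F\}$ with all member-sizes $\not\equiv a\pmod p$, and the injectivity of $F\mapsto R_F$, is exactly the framework the paper uses. But the rank lemma itself --- that such an $\II$ always has rank at most $k/p+(p-1)(p-2)$ --- is the entire content, and your sketch does not establish it. The ``contract a common element and recurse'' step is fine, but the ``otherwise'' branch (no element in all maximal members) is where the structure must be exploited, and ``a careful structural/greedy argument then produces a small hitting set'' is not an argument; it is unclear whether a direct hitting-set induction of this form can be made to close, and the additive $(p-1)(p-2)$ term needs to come from somewhere concrete. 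The paper's route (its \cref{lemma: bound via convex sequences}) is different and worth internalizing: take a \emph{minimal} uncovered set $A=\{a_1,\dots,a_t\}$, let $B_i\in\II$ cover $A\setminus\{a_i\}$, greedily order the indices so that the successive intersections $C_j=B_{i_1}\cap\cdots\cap B_{i_j}$ (which lie in $\II$ by intersection-closure) have sizes $\ell_j$ forming a strictly decreasing \emph{convex} sequence in $L=\{\ell:\ell\not\equiv a\pmod p\}$. One then bounds the length of any such convex sequence: in the difference sequence $\delta_i=\ell_i-\ell_{i+1}$, each value $j\in[p-1]$ can repeat at most $p-2$ times (else $p$ consecutive $\ell$'s hit every residue, including $a$), and at most $\lfloor(k-1)/p\rfloor$ of the $\delta_i$ can be $\ge p$, giving $t\le k/p+(p-1)(p-2)$. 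You should either prove the rank bound along these lines or supply the structural argument you allude to; as written, the key step is missing.
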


We need the following general estimate, which might be of independent interest.
\begin{lemma}\label{lemma: bound via convex sequences}
     Let $k$ be a positive integer and $L \subseteq [0,k-1]$.
     Let $s$ be the largest integer such that there exists a sequence $\ell_1>\ell_2>\dots>\ell_s$ in $L$ which is convex, i.e., $2\ell_i \le \ell_{i-1}+\ell_{i+1}$ for all $i\in[2,s-1]$.
     Then, any $\mc{F}\subseteq \binom{[n]}{k}$, where $|F_1\cap F_2| \in L$ for any two distinct members $F_1,F_2 \in \mc{F}$, has cardinality $O_k(n^s)$.
\end{lemma}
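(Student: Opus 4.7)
The plan is to combine the weak delta-system lemma (\cref{lemma: weak furedi}) with a new rank bound for intersection-closed families whose members have sizes in $L$. First, apply \cref{lemma: weak furedi} to $\mathcal{F}$ with $m = 2$, producing a subfamily $\mathcal{F}' \subseteq \mathcal{F}$ of size $\Omega_k(|\mathcal{F}|)$ such that each $F \in \mathcal{F}'$ carries an intersection-closed $\mathcal{I}_F \subseteq 2^F \setminus \{F\}$ with $I(F, \mathcal{F}') \subseteq \mathcal{I}_F$, and every member of $\mathcal{I}_F$ is the kernel of a sunflower in $\mathcal{F}$. Since the kernel of such a sunflower equals the pairwise intersection of any two of its petals and $\mathcal{F}$ has all pairwise intersections in $L$, every element of $\mathcal{I}_F$ has size in $L$.

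The key claim I would then establish is: any intersection-closed family $\mathcal{I} \subseteq 2^X \setminus \{X\}$ whose members have sizes in $L$ has rank at most $s$. Granting this, for each $F$ one extends a minimum uncovered subset of $F$ (with respect to $\mathcal{I}_F$) to an uncovered set $R_F \subseteq F$ of size exactly $s$; the extension remains uncovered because covering a superset would cover the original. The assignment $F \mapsto R_F$ is injective, since if $R_F = R_{F'}$ for distinct $F, F'$, then $R_F \subseteq F \cap F' \in \mathcal{I}_F$, contradicting the uncoveredness of $R_F$. Therefore $|\mathcal{F}'| \le \binom{n}{s}$, yielding $|\mathcal{F}| = O_k(n^s)$.

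The main work lies in proving the rank claim, and this is where convexity must enter. Suppose $\mathcal{I}$ has rank $r$, fix an uncovered set $R = \{y_1, \ldots, y_r\} \subseteq X$, and for each $i \in [r]$ choose some $G_i \in \mathcal{I}$ with $R \setminus \{y_i\} \subseteq G_i$ (so necessarily $y_i \notin G_i$). I would then construct a chain $H_1 \supsetneq H_2 \supsetneq \cdots \supsetneq H_r$ in $\mathcal{I}$ via a greedy ordering: having chosen $\sigma(1), \ldots, \sigma(j)$ and set $H_j = G_{\sigma(1)} \cap \cdots \cap G_{\sigma(j)}$, pick $\sigma(j+1)$ among the remaining indices to maximize $|H_j \setminus G_{\sigma(j+1)}|$, and set $H_{j+1} = H_j \cap G_{\sigma(j+1)}$. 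The crucial observation is that the gaps $d_j := |H_j| - |H_{j+1}| = |H_j \setminus G_{\sigma(j+1)}|$ are automatically non-increasing: for any index $i$ still unused at step $j+2$, the monotonicity $|H_{j+1} \setminus G_i| \le |H_j \setminus G_i|$ combined with the greedy choice of $\sigma(j+1)$ gives $d_{j+1} \le d_j$. Since $y_{\sigma(j+1)} \in H_j \setminus G_{\sigma(j+1)}$, each $d_j \ge 1$, so $|H_1| > \cdots > |H_r|$ is strictly decreasing with all $|H_j| \in L$, producing a convex decreasing sequence of length $r$ in $L$ and establishing $r \le s$.
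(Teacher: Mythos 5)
Your proof is correct and follows essentially the same route as the paper: apply the weak delta-system lemma with $m=2$, reduce to a rank bound for the intersection-closed families $\mathcal{I}_F$, and establish that bound by a greedy chain construction whose step sizes are automatically non-increasing, yielding the convex sequence. Your phrasing via maximizing $|H_j \setminus G_{\sigma(j+1)}|$ is equivalent to the paper's choice of minimizing $|C_{j-1}\cap B_{i_j}|$, and the injectivity argument for $F \mapsto R_F$ matches the paper's treatment of the sets $X_F$.
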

\begin{proof}
    Apply~\cref{lemma: weak furedi} with $m=2$ to obtain $\FF'\subseteq\FF$ with $|\FF'|=\Omega_k(|\FF|)$ such that for every $F \in \mc{F}'$, there exists an intersection-closed set system $\mathcal{I}_F$ such that $I(F,\mc{F}')\subseteq \mathcal{I}_F\subseteq 2^{F}\setminus \{F\}$ and every element of $\mathcal{I}_F$ appears as the intersection of two distinct sets in $\FF$. 
    In particular, each member of $\mathcal{I}_F$ has size in $L$. 
    It suffices to show that 
    \begin{equation}\label{eq: star}
        \textnormal{for every } F \in \mc{F}' \textnormal{, there exists a set } X_F \subseteq F\textnormal{ with } |X_F| \le s\textnormal{ that is not contained in any member of }\II_F.
    \end{equation}
    Note that \eqref{eq: star} implies that $X_F\neq X_{F'}$ for distinct $F,F'\in\FF'$.
    Otherwise, we have $X_F=X_{F'}\subseteq F\cap F'\in I(F,\FF')\subseteq\II_F$, contradicting \eqref{eq: star}.
    Hence $X_F$ uniquely determines each $F\in\FF'$ and we may assume $|X_F|=s$ by putting in more elements from $F$.
    This gives $|\FF'|\leq\binom{n}{s}$ and thus $|\FF|=O_k(n^{s})$.

    Now, we prove \eqref{eq: star}.
    Let  $F \in \mc{F}'$ and let $A \subseteq F$ be a minimal set that is not contained in any member of $\mc{I}_F$.
    Write $A = \{a_1,\dots,a_t\}$ where $t:=|A|$.
    It suffices to show that $t \le s$.
    Observe that the minimality of $A$ implies that for each $i \in [t]$, $A\setminus \{a_i\}$ is contained in some $B_i \in \mc{I}_F$.
   We define a permutation $i_1,i_2,\dots,i_t$ of $[t]$ recursively as follows.
    Let $i_1 \in [t]$ be a choice that minimises $|B_{i_1}|$.
    For $j\in[t-1]$, if we have already defined $i_1,\dots,i_j$, then take $i_{j+1} \in [t]\setminus \{i_1,\dots,i_j\}$ that minimises $|B_{i_1}\cap B_{i_2}\cap\dots \cap B_{i_{j+1}}|$.
    
    For each $j \in [t]$, write $C_j := B_{i_1}\cap B_{i_2}\cap\dots\cap B_{i_j}$ and $\ell_j:=|C_j|$. By definition, $C_{j+1} \subset C_j$. 
    Firstly, since $\mc{I}_F$ is intersection-closed with all members having size in $L$, we know $C_j \in \mc{I}_F$ and $\ell_j \in L$ for all $j \in [k]$.
    Secondly, notice that for every $i \in [t]$, $a_i$ is contained in all of $B_1,B_2,\dots,B_{i-1},B_{i+1},B_{i+2},\dots,B_t$ but not in $B_i$. This means $a_{i_{j+1}} \in C_{j}\setminus C_{j+1}$, so $\ell_{j+1} < \ell_j$.
    Thirdly, we show $2\ell_j \leq \ell_{j-1} + \ell_{j+1}$ for all $j \in [2,t-1]$.
    Note that \[
        \begin{cases}
            \ell_{j-1}-\ell_{j}=|C_{j-1}|-|C_{j-1}\cap B_{i_j}|=|C_{j-1}\setminus B_{i_j}|, \\
            \ell_{j}-\ell_{j+1}=|C_{j}|-|C_{j}\cap B_{i_{j+1}}|=|C_j\setminus B_{i_{j+1}}|\le |C_{j-1}\setminus B_{i_{j+1}}|. 
        \end{cases}   
    \]
    But, by the choice of $i_j$ (minimising $|C_{j-1}\cap B_{i_j}|$), we have $|C_{j-1}\setminus B_{i_j}|\geq |C_{j-1}\setminus B_{i_{j+1}}|$, giving the claimed bound $2\ell_j\leq \ell_{j-1}+\ell_{j+1}$.
    Hence, by the definition of $s$, we must have $t \le s$, as desired.
\end{proof}

\begin{proof}[Proof of \cref{theorem: excluding one residue}]
    We start with the upper bound.
    Let $L= \{\ell \in [0,k-1]: \ell \not\equiv a\modp{p}\}$, so every pairwise intersection from $\FF$ belongs to $L$.
    By \cref{lemma: bound via convex sequences}, it suffices to show that the longest sequence $\ell_1>\ell_2>\dots>\ell_s$ in $L$, where $2\ell_i \le \ell_{i-1}+\ell_{i+1}$ for all $i\in[2,s-1]$, has $s \le\lceil k/p\rceil+(p-1)(p-2)$.
    
    Let $\delta_i:=\ell_i-\ell_{i+1}$ for all $i\in[s-1]$, so we have $\delta_1 \ge \delta_2 \ge \dots\ge \delta_{s-1} \ge 1$.
    We note that each $j \in [p-1]$ can only occur at most $p-2$ times as the value of some $\delta_i$.
    Indeed, suppose $\delta_i=\delta_{i+1}=\dots=\delta_{i+p-2}=j$ holds for some $i$.
    Then $\ell_i,\ell_{i+1},\dots,\ell_{i+p-1}$ give all the $p$ possible residues modulo $p$, so one of them must be $a$ modulo $p$, giving a contradiction.
    In addition, the number of values $i$ for which $\delta_i\geq p$ is at most $\floor{(k-1)/p}=k/p-1$, since the sum of these values is at most $\ell_1-\ell_s\le k-1$.
    In total, $s-1 \le (p-1)(p-2)+\floor{(k-1)/p}$, so $s \le\lceil k/p\rceil+(p-1)(p-2)$.

    We now prove the lower bound (assuming $n$ is sufficiently large whenever necessary).
    Partition $[n]$ into $X_1 \cup X_2$ with $|X_1|=\floor{n/(2p)}p$ and $|X_2|=n-|X_1|$; so $|X_1|,|X_2| = \Theta_k(n)$.
    Further, partition $X_1$ into $s:=\floor{n/(2p)}$ sets $Y_1,\dots,Y_s$ each of size $p$; so $s=\Theta_k(n)$.
    If $a=p-1$, let $\mc{F}_1$ consist of all $(k-p)$-element sets that are a union of $k/p-1$ of the sets $Y_i$.
    Hence, $|\mc{F}_1|=\Theta_k(n^{k/p-1})$ and $|F\cap F'|\equiv 0\modp{p}$ whenever $F,F'\in\FF_1$.
    Since for every $p$-element subset of $X_2$ there are at most $p|X_2|$ other $p$-element subsets that intersect it in exactly $p-1$ points, using a greedy procedure, we can obtain a family $\mc{F}_2\subseteq \binom{X_2}{p}$ such that no pair intersects in $p-1$ elements, and $|\mc{F}_2|=\Theta_k(|X_2|^{p-1})=\Theta_k(n^{p-1})$.
    Then, take $\mc{F}:=\{F_1\cup F_2: F_1 \in \mc{F}_1, F_2 \in \mc{F}_2\}$.
    Clearly, no two sets $F,\tilde{F} \in \mc{F}$ have $|F\cap \tilde{F}| \equiv a\modp{p}$, and $|\mc{F}|=\Theta_k(n^{k/p+p-2})$, as desired.

    If $k \ge 2p$, similarly, let $\mc{F}_1$ consist of all $(k-2p)$-element sets that are a union of $k/p-2$ of the sets $Y_i$.
    Hence, $|\mc{F}_1|=\Theta_k(n^{k/p-2})$ and $|F\cap F'|\equiv 0\modp{p}$ if $F,F'\in\FF_1$.
    Let $X_3\subseteq X_2$ be any subset of size $a+1$.
    Via a similar greedy argument as above, there is a family $\mc{F}_2\subseteq \binom{X_2\setminus X_3}{2p-a-1}$ such that no pair intersects in $p-1$ elements, and $|\mc{F}_2|=\Theta_k(n^{p-1})$.
    Then, take $\mc{F}:=\{F_1\cup F_2\cup X_3: F_1 \in \mc{F}_1, F_2 \in \mc{F}_2\}$.
    Clearly, no two sets $F,\tilde{F} \in \mc{F}$ have $|F\cap \tilde{F}| \equiv a\modp{p}$, and $|\mc{F}|=\Theta_k(n^{k/p+p-3})$, as desired.
\end{proof}

\subsection{Frankl--Wilson theorem revisited}\label{section: Frankl--Wilson}

In this section, we demonstrate how to use the delta-system method and \cref{lemma: weak furedi} to give a different proof of the celebrated Frankl--Wilson theorem up to a constant factor. To do this, we will need the following proposition, which shows the non-existence of certain set systems with restricted intersections. Closely related results appeared in \cite{nagele2019submodular, BBS23} and have applications to submodular minimization and extremal set theory.

\begin{proposition}\label{proposition: rank for FW}
    Let $p$ be a prime and $L\subseteq \zpz$ with $|L|=s$, and let $X$ be a ground set with $|X|$\textnormal{~mod~}$p\not \in L$. Then, there exists no intersection-closed set system $\FF$ on $X$ such that $|F|\! \!\mod p  \in L$ for all $F \in {\cal F}$, and every subset of $X$ of size at most $s$ is contained in some element of $\FF$.
\end{proposition}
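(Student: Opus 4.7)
The plan is to argue by contradiction via the polynomial method, combined with a Möbius-style weighting on the intersection-closed family $\FF$. Suppose such a family $\FF$ exists. The first ingredient is the polynomial
$$P(x):=\prod_{\ell\in L}(x-\ell)\in\mathbb{F}_p[x]$$
of degree $s$: by hypothesis, $P(|X|)\not\equiv 0\pmod p$, while $P(|F|)\equiv 0\pmod p$ for every $F\in\FF$. Expanding $P$ in the binomial basis, I would write $P(x)=\sum_{j=0}^{s}c_j\binom{x}{j}$ with coefficients $c_j\in\mathbb{F}_p$.

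The heart of the plan is to construct a weight function $\mu:\FF\to\mathbb{F}_p$ such that $\sum_{F\in\FF,\,F\supseteq A}\mu(F)=1$ for every $A\subseteq X$ with $|A|\le s$. Once this is in hand, a double count gives, for each $j\le s$,
$$\binom{|X|}{j}=\sum_{A\in\binom{X}{j}}\sum_{F\in\FF,\,F\supseteq A}\mu(F)=\sum_{F\in\FF}\mu(F)\binom{|F|}{j},$$
whence $P(|X|)=\sum_{j=0}^{s}c_j\binom{|X|}{j}=\sum_{F\in\FF}\mu(F)P(|F|)=0$, contradicting $P(|X|)\neq 0$ in $\mathbb{F}_p$.

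To build $\mu$, I would use the intersection-closed property. Since $\FF$ is finite, I would define $\mu$ top-down on the poset $(\FF,\subseteq)$ by $\mu(G):=1-\sum_{F\in\FF,\,F\supsetneq G}\mu(F)$ (so $\mu(G)=1$ at maximal elements), which by a straightforward induction from the top yields $\sum_{F\in\FF,\,F\supseteq G}\mu(F)=1$ for \emph{every} $G\in\FF$. Now for any $A$ with $|A|\le s$, the covering hypothesis produces some $F\in\FF$ with $A\subseteq F$, and intersection-closedness guarantees that $\mc{M}(A):=\bigcap\{F\in\FF:F\supseteq A\}$ lies in $\FF$. Since $\{F\in\FF:F\supseteq A\}$ is exactly the principal filter above $\mc{M}(A)$ inside $\FF$, the desired identity follows from the identity at $G=\mc{M}(A)$.

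The main obstacle I expect is the construction of $\mu$ satisfying the summing-to-$1$ condition on all small $A$: without intersection-closedness, the set of $F\in\FF$ containing a fixed $A$ need not be a principal filter inside $\FF$, and there is no reason a single weight function could satisfy the required identity simultaneously for every small $A$. Intersection-closedness is precisely what collapses these demands to one condition per element of $\FF$, which is then routine to satisfy by the triangular recursion above; the remaining steps are essentially bookkeeping.
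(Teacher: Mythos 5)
Your proof is correct. The core idea is the same as the paper's: introduce the polynomial $P(x)=\prod_{\ell\in L}(x-\ell)$ over $\mathbb{F}_p$, which vanishes on $|F|$ for $F\in\FF$ but not on $|X|$, expand $P$ in the binomial basis, and derive a contradiction via a combinatorial identity expressing $\binom{|X|}{j}$ as a linear combination of $\binom{|G|}{j}$ over $G$ coming from $\FF$. The difference is in how that combinatorial identity is obtained. The paper invokes inclusion--exclusion over all non-empty subfamilies $\mathcal{I}\subseteq\FF$, yielding
$\binom{|X|}{j}=\sum_{\mathcal{I}\neq\emptyset}(-1)^{|\mathcal{I}|+1}\binom{|\bigcap_{F\in\mathcal{I}}F|}{j}$,
and then uses intersection-closedness to conclude $\bigcap_{F\in\mathcal{I}}F\in\FF$ term by term. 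You instead define, by a triangular top-down recursion on the poset $(\FF,\subseteq)$, a single $\mathbb{F}_p$-valued weight $\mu(F)$ per member $F$, with the defining property $\sum_{F\supseteq G}\mu(F)=1$ for all $G\in\FF$; intersection-closedness then enters by guaranteeing that, for $|A|\le s$, the filter $\{F\in\FF:F\supseteq A\}$ is principal at $\mc{M}(A)=\bigcap\{F\in\FF:F\supseteq A\}\in\FF$. The two arguments are of course closely related --- your $\mu(G)$ is what you would obtain from the paper's sum by collecting all $\mathcal{I}$ whose intersection equals $G$ --- but your version avoids the sum over $2^{|\FF|}$ subfamilies and isolates the role of intersection-closedness more transparently as ``small $A$ determine a unique minimal element $\mc{M}(A)\in\FF$.'' Both proofs are complete; yours is a clean Möbius-style reformulation of the same mechanism.
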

\begin{proof}
Assume, for a contradiction, that such a family $\FF$ exists. Then, for all $i\leq s$, by the inclusion--exclusion principle (similarly to \cite{BBS23}),
    \begin{equation}\label{equation: inclusion-exclusion}
    \binom{|X|}{i}=\sum_{\mathcal{I}\subseteq \cal F,\mathcal{I}\not=\emptyset}(-1)^{|\mathcal{I}|+1}\binom{|\bigcap_{F\in \mathcal{I}}F|}{i}.
    \end{equation}
Consider the polynomial $f(x)=\prod_{\ell\in L}(x-\ell)$ over $\zpz$. Since $|X|$ mod $p\not \in L$, we have $f(|X|)\not =0$. Moreover, we can write $f(x)=\sum_{i=0}^{s}\lambda_i \binom{x}{i}$ for some $\lambda_i\in\zpz$. Using~\eqref{equation: inclusion-exclusion} for each $i\leq s$, and doing calculations in $\zpz$, we get
\begin{align*}
    f(|X|)=\sum_{i=0}^s\lambda_i\binom{|X|}{i}&=\sum_{i=0}^{s}\lambda_i\left[\sum_{\mathcal{I}\subseteq \cal F,\mathcal{I}\not=\emptyset}(-1)^{|\mathcal{I}|+1}\binom{\big|\bigcap_{F\in \mathcal{I}}F\big|}{i}\right]\\
    &=\sum_{\mathcal{I}\subseteq \cal F,\mathcal{I}\not=\emptyset}(-1)^{|\mathcal{I}|+1}\left[\sum_{i=0}^{s}\lambda_i\binom{\big|\bigcap_{F\in \mathcal{I}}F\big|}{i}\right]\\
    &=\sum_{\mathcal{I}\subseteq \cal F,\mathcal{I}\not=\emptyset}(-1)^{|\mathcal{I}|+1} f\Big(\big|\bigcap_{F\in \mathcal{I}}F\big|\Big).
\end{align*}
However, $\cal F$ is intersection-closed and so $\bigcap_{F\in \mathcal{I}}F\in\cal F$ for all $\mathcal{I}\subseteq \cal F, \mathcal{I}\not=\emptyset$. Thus, $|\bigcap_{F\in \mathcal{I}}F| \!\! \mod p \in L$ and $f(|\bigcap_{F\in \mathcal{I}}F|)=0$ for all such $\mathcal{I}$, and the equation above gives $f(|X|)=0$, a contradiction.   
\end{proof}

\begin{theorem}[Frankl--Wilson~\cite{FW81}]
    Let $n \geq k$ be two positive integers, $p$ be a prime, and let $L\subseteq \zpz$ with \({k\!\!\mod p}\not \in L\). Then, if $\FF\subseteq \binom{[n]}{k}$ is such that $|F_1\cap F_2| \textrm{ mod } p\in L$ for all distinct $F_1,F_2\in \FF$, then $|\FF|=O_k(n^{|L|})$.
\end{theorem}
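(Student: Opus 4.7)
The plan is to combine our variant of the delta-system lemma (Lemma~\ref{lemma: weak furedi}) with the rank bound from Proposition~\ref{proposition: rank for FW}, in the same spirit as the proof of Lemma~\ref{lemma: bound via convex sequences}. We may assume $|L|=s\le k$, since otherwise $|\FF|\le\binom{n}{k}=O_k(n^s)$ is trivial. Apply Lemma~\ref{lemma: weak furedi} to $\FF$ with $m=2$ to obtain a subfamily $\FF'\subseteq \FF$ with $|\FF'|=\Omega_k(|\FF|)$ such that for every $F\in\FF'$ there is an intersection-closed set system $\II_F$ with $I(F,\FF')\subseteq\II_F\subseteq 2^F\setminus\{F\}$, and every element of $\II_F$ arises as $F_1\cap F_2$ for some distinct $F_1,F_2\in\FF$.

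Fix any $F\in\FF'$. By the hypothesis of the theorem, every $I\in\II_F$ satisfies $|I|\,\mathrm{mod}\,p\in L$, while $|F|\,\mathrm{mod}\,p=k\,\mathrm{mod}\,p\notin L$. Thus Proposition~\ref{proposition: rank for FW}, applied with ground set $X=F$ and the family $\II_F$, guarantees that some subset of $F$ of size at most $s$ is not contained in any member of $\II_F$. Pick such a set and, if necessary, enlarge it inside $F$ to obtain $X_F\subseteq F$ of size exactly $s$; the ``not a subset of any $I\in\II_F$'' property is preserved under taking supersets, and since $|F|=k\ge s$ this padding is possible.

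The map $F\mapsto X_F$ is injective on $\FF'$: if $X_F=X_{F'}$ for distinct $F,F'\in\FF'$, then $X_F\subseteq F\cap F'\in I(F,\FF')\subseteq\II_F$, contradicting the defining property of $X_F$. Hence $|\FF'|\le\binom{n}{s}=O_k(n^s)$, and since $|\FF|=O_k(|\FF'|)$, we conclude $|\FF|=O_k(n^{|L|})$.

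The real content lies in Proposition~\ref{proposition: rank for FW} (the inclusion--exclusion/polynomial argument), which is already in hand; the delta-system step is what lets us replace the actual family $\FF$ by the intersection-closed ``envelopes'' $\II_F$ to which the proposition applies. Using Lemma~\ref{lemma: weak furedi} rather than Füredi's original lemma only affects the hidden $O_k$-constant, so no difficulty arises there. The only mildly delicate point to check is that every element of $\II_F$ indeed has size whose residue lies in $L$, which is immediate because each such element is literally a pairwise intersection of two distinct sets from~$\FF$.
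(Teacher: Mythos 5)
Your proposal is correct and follows the paper's proof essentially step by step: apply Lemma~\ref{lemma: weak furedi} with $m=2$, invoke Proposition~\ref{proposition: rank for FW} on each $\II_F$ to extract a distinguishing $s$-set $X_F\subseteq F$, and conclude by injectivity of $F\mapsto X_F$. The only cosmetic difference is that you spell out the padding of $X_F$ to size exactly $s$, which the paper leaves implicit.
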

\begin{proof}
    Apply~\cref{lemma: weak furedi} with $m=2$ to obtain $\FF'\subseteq\FF$ with $|\FF'|=\Omega_k(|\FF|)$ such that for every $F \in \mc{F}'$, there exists an intersection-closed set system $\mathcal{I}_F$ such that $I(F,\mc{F}')\subseteq \mathcal{I}_F\subseteq 2^{F}\setminus \{F\}$ and every element of $\mathcal{I}_F$ appears as the intersection of two distinct sets in $\FF$. In particular, each member of $\mathcal{I}_F$ has size in $L$~mod~$p$. Clearly, we may assume $|L|\leq k$. 
    By~\cref{proposition: rank for FW} applied for each $\II_F$, we get that for all $F\in\FF'$ there is some $X_F\subseteq F$ of size $|L|$ such that $X_F$ is not a subset of any element of $\II_F$. 
    Note that all the sets $X_F$ must be distinct (since if $F'\not =F$, then $F\cap F'\in\II_F$ and hence $X_F\not \subseteq F\cap F'$), giving $|\FF'|\leq \binom{n}{|L|}$. Thus, $|\FF|=O_k(n^{|L|})$, as claimed.
\end{proof}

\section{Modular setting}\label{sec: modular things}
In this section, we discuss forbidding $L$-cliques in the modular setting.
We will prove \cref{theorem: modular clique} and provide a general upper bound for $r(k,L,m)$ where $L\subseteq \zpz$.
We will also discuss variants of \cref{theorem: modular clique} in terms of the fractional chromatic number of the auxiliary graph $G_\mc{F}$
(whose vertices are the sets in $\mc F$ and two sets $F,F' \in \mc{F}$ are connected if and only if $|F\cap F'|\in L$), along with their application in quantum computing. 

We start with the lower bound in \cref{theorem: modular clique}.
We will deduce it from the following stronger statement, which deals with the case where $k\equiv 0$ (mod $p$) and $L=(\zpz)\setminus\{0\}$.
Surprisingly, we show that if $\mc{F}$ contains no ``large'' $L$-clique, then there is a ``large'' subfamily $\mc{F}'\subseteq \mc{F}$ with the following ``atomic structure'': there exist disjoint sets (atoms) $X_1,\dots,X_n$ such that every $F\in \mc{F}'$ is a union of $k/p$ of them. 
Arguably, such atomic structures are the most natural instances of families of sets where each intersection size is a multiple of $p$. 
This structure is very strong because it implies not only that the intersection size of every two sets in $\mc{F}'$ is a multiple of $p$, but also that the intersection of \emph{any} number of sets in $\mc{F}'$ is a multiple of $p$.
In general, these two properties can behave very differently. We will discuss this more in \cref{remark: atomic structure}.

In order to obtain this atomic structure, we carefully select $p$ elements in the ground set and only keep the sets that contain either none of them or all of them. 
Then, we shrink the ground set by $p$ and repeat this process.
As we shall see, the proof does not require $p$ to be a prime, so we use letter $d$ in place of $p$.
Also, for inductive reasons, we consider a weighted version, which will also be useful for later applications for fractional chromatic numbers.

\begin{theorem}\label{thm:modularstructure}
    Let $k$, $d$, and $m$ be positive integers with $k\geq d$. 
    Assume that $\FF$ is a family of sets of size at most $k$ over some finite ground set $X$ such that the size of each element of $\FF$ is a multiple of $d$, and $\FF$ does not contain $m$ members that have pairwise intersection sizes non-zero modulo $d$ (i.e., $\FF$ has no $L$-clique of size $m$ for $L=(\mathbb{Z}/d\mathbb{Z})\setminus\{0\}$).
    Assume furthermore that there is a given non-negative weighting $w$ on $\FF$, and define a new weighting $w'$ by setting, for each $F\in\FF$, 
    \begin{equation} \label{eq: new weights}
        w'(F)=\left(\frac{1}{d\binom{k}{d-1}m}\right)^{|F|/d}w(F).
    \end{equation}
    Then, for some non-negative integer $n$, the ground set $X$ has disjoint subsets $X_1,\dots,X_n$ of size $d$ each such that if we set
    \[\FF'=\{F\in \FF: F\textnormal{ is a union of some of the sets $X_i$}\},\]
   then $\sum_{F \in \mc{F}'} w(F) \ge \sum_{F \in \mc{F}} w'(F)$.
\end{theorem}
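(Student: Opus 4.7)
The argument proceeds by induction on $|X|$. The base case is when every member of $\FF$ is empty: take $n=0$ and $\FF'=\FF$, which works since $w'(F)=w(F)$ whenever $|F|=0$. For the inductive step, the key task is to locate a single $d$-element set $X_1\subseteq X$ satisfying
\begin{equation}\label{eq:keyX1}
d\binom{k}{d-1}m \cdot \sum_{F\in\FF,\,X_1\subseteq F}w'(F) \;\ge\; \sum_{F\in\FF,\,F\cap X_1\ne\emptyset}w'(F).
\end{equation}
Given such $X_1$, let $\FF_0=\{F\in\FF: F\cap X_1=\emptyset\}$, $\FF_1=\{F\in\FF: X_1\subseteq F\}$, and apply the inductive hypothesis to $\FF_{\textnormal{new}}:=\FF_0\cup\{F\setminus X_1: F\in\FF_1\}$ over the ground set $X\setminus X_1$ with inherited weights; the no-$L$-$m$-clique property of $\FF$ is clearly preserved. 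A direct calculation gives $w'_{\textnormal{new}}(F\setminus X_1)=d\binom{k}{d-1}m\cdot w'(F)$ for every $F\in\FF_1$, so \eqref{eq:keyX1} translates exactly into $\sum_{F\in\FF_{\textnormal{new}}}w'_{\textnormal{new}}(F)\ge\sum_{F\in\FF}w'(F)$, and the atoms $X_2,\dots,X_n$ produced by the recursive call combine with $X_1$ to yield the required subfamily $\FF'$.

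To find $X_1$, pick $x\in X$ maximising $S_x:=\sum_{F\ni x}w'(F)$ and work inside $\FF_x:=\{F\in\FF:x\in F\}$. Greedily construct a sequence of pivots $F_0^{(1)},F_0^{(2)},\dots\in\FF_x$: having chosen $F_0^{(1)},\dots,F_0^{(j)}$, set
\[
\FF_x^{(j)}:=\bigl\{F\in\FF_x: |F\cap F_0^{(i)}|\not\equiv 0\pmod{d}\text{ for all }i\le j\bigr\},
\]
and pick any $F_0^{(j+1)}\in\FF_x^{(j)}$ whenever this family is nonempty. By construction the sets $F_0^{(1)},\dots,F_0^{(j)}$ pairwise intersect in sizes not divisible by $d$, so the $L$-clique hypothesis forces the process to terminate with $\FF_x^{(j^*)}=\emptyset$ for some $j^*\le m-1$. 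The classes $\FF_x^{[i]}:=\{F\in\FF_x^{(i-1)}:|F\cap F_0^{(i)}|\equiv 0\pmod{d}\}$ then partition $\FF_x$, and by pigeonhole some class $\FF_x^{[i^*]}$ has total $w'$-weight at least $S_x/(m-1)$.

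Since every $F\in\FF_x^{[i^*]}$ has $|F\cap F_0^{(i^*)}|$ a positive multiple of $d$, the set $F\cap F_0^{(i^*)}\setminus\{x\}$ contains at least one $(d-1)$-subset of $F_0^{(i^*)}\setminus\{x\}$. Averaging over the at most $\binom{k-1}{d-1}$ such $(d-1)$-subsets produces a $Y^*$ with
\[
\sum_{F\in\FF_x^{[i^*]},\,Y^*\cup\{x\}\subseteq F}w'(F)\;\ge\;\frac{S_x}{(m-1)\binom{k-1}{d-1}}\;\ge\;\frac{S_x}{m\binom{k}{d-1}},
\]
the last inequality using $\binom{k-1}{d-1}\le\binom{k}{d-1}$. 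Setting $X_1:=Y^*\cup\{x\}$ yields a $d$-set contained in $F_0^{(i^*)}$, and the maximality of $S_x$ gives $\sum_{F\cap X_1\ne\emptyset}w'(F)\le\sum_{x'\in X_1}S_{x'}\le d\,S_x$, from which \eqref{eq:keyX1} follows. The main obstacle is precisely this construction of $X_1$: one must guarantee simultaneously that $X_1$ is contained in enough weight for the induction to make progress and that little weight is lost to sets intersecting $X_1$ only partially (those vanish in $\FF_{\textnormal{new}}$). The $L$-clique hypothesis enters exactly here, by capping the length of the greedy chain of pivots at $m-1$, while the factor $\binom{k}{d-1}$ is the unavoidable averaging cost of pinning down the remaining $d-1$ elements of $X_1$ once the first element $x$ has been chosen.
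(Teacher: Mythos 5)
Your proof is correct and follows essentially the same inductive approach as the paper: induct on $|X|$, pick $x$ maximizing $w'_x:=\sum_{F\ni x}w'(F)$, extract a $d$-set $X_1\ni x$ with $w'_{X_1}\ge w'_x/(m\binom{k}{d-1})$, pass to the ground set $X\setminus X_1$ with the combined weights on $\FF_0\cup\{F\setminus X_1:F\in\FF_1\}$, and recurse. The only variation is in how $X_1$ is located: the paper selects a single pivot $F_0$ whose closed neighbourhood in the auxiliary graph on $\{F\ni x\}$ carries weight at least $w'_x/m$ (existence via greedy stripping of closed neighbourhoods, which would otherwise yield an $L$-clique of size $m$), whereas you greedily build a chain of at most $m-1$ pairwise-$L$-cliquing pivots, partition $\{F\ni x\}$ by which pivot first ``absorbs'' each set, and pigeonhole; these are equivalent packagings of the same greedy argument, and both then pigeonhole over the at most $\binom{k-1}{d-1}$ relevant $(d-1)$-subsets of the chosen pivot.
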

\begin{proof}
    We prove the statement by induction on $|X|$. When $|X|<d$, the statement is trivial as $\FF$ must be $\emptyset$ or $\{\emptyset\}$ and we can take $n=0$ so that $\FF'=\FF$. Now assume $|X|\geq d$ and the statement holds for smaller values of $|X|$. Given a set of elements $Y\subseteq X$, let us write $w'_Y$ for $\sum_{F\in\FF:Y\subseteq F}w'(F)$. If $x\in X$, we will write $w'_x$ for $w'_{\{x\}}$. 
    Let $x\in X$ be such that $w'_x$ is maximal.
    \begin{claim}\label{claim:modularstructure_1}
        There exists a set $X_0\subseteq X$ of size $d$ such that $x\in X_0$ and $w'_{X_0}\geq \frac{1}{\binom{k}{d-1}m}w'_x$.
    \end{claim}
    \begin{proof}
        Consider $\mc{H}=\{F\in \mc{F}: x \in F\}$ and define an auxiliary graph $G$ on vertex set $\mc{H}$ by connecting distinct $F,F'\in\HH$ whenever $|F\cap F'| \equiv 0\modp{d}$. 
        Then $w'$ gives a weighting of the vertices of this graph, and by the assumption on $\mc{F}$, $G$ has no independent set of size $m$. Thus, there is a vertex $F_0 \in \mc{H}$ whose 
        weighted neighbourhood
        has weight $w'(F_0)+\sum_{F\text{ adjacent to }F_0} w'(F) \ge \frac{1}{m}w'_x$. Indeed, otherwise, we would be able to greedily pick $m$ sets (vertices in $G$) and delete their neighborhoods to form an independent set.
        Fix such an $F_0 \in \mc{H}$ and consider the family $\mc{H}_0$ containing $F_0$ and all neighbours of $F_0$ in the graph $G$.
        We know that $\sum_{F\in \mc{H}_0} w'(F) \ge \frac{1}{m} w'_x$ and that every set in $\mc{H}_0$ intersects $F_0$ in at least $d$ elements.
        Hence, by the pigeonhole principle, there exists a subset $X_0 \subseteq F_0$ of size $d$ such that $x\in X_0$ and $w'_{X_0}\ge\sum_{F\in \mc{H}_0: X_0\subseteq F} w'(F) \ge \frac{1}{\binom{k}{d-1}m} w'_x$, as desired.
    \end{proof}
    Thus, we can pick $X_0$ as in the claim above. Let 
    \begin{align*}
        \FF_0 = \{F\in \FF: F\cap X_0=\emptyset\},\quad
        \FF_1 = \{F\in \FF: X_0\subseteq F\},\quad\text{and}\quad
        \GG   = \FF_0\cup\{F \setminus X_0: F\in \FF_1\}.
    \end{align*}
    Note that $\GG$ is a collection of sets of size at most $k$ over ground set $X\setminus X_0$ and the size of every set in $\GG$ is a multiple of $d$. 
    Moreover, whenever $G_1,\dots,G_m\in \GG$, then there exist $i,j\in[m]$ distinct such that $|G_i\cap G_j|\equiv 0\textnormal{ mod }d$. 
    Indeed, by the definition of $\GG$, for each $i\in [m]$ there is some $F_i\in \FF_0\cup \FF_1$ such that $F_i=G_i$ or $F_i=G_i\cup X_0$. 
    By our assumption on $\FF$, there are $i,j\in [m]$ distinct such that $|F_i\cap F_j|\equiv 0$~mod~$d$, and then $|G_i\cap G_j|\equiv 0$~mod~$d$ as $|X_0|=d$ and $X_0\cap (G_i\cup G_j)=\emptyset$.

    Define a weight function $u$ on $\GG$ by setting $$u(G)=w(G)+w(G\cup X_0),$$ where we write $w(A)=0$ if $A\not\in\FF$. 
    By the induction hypothesis applied for the family $\GG$ and weights $u$ over the ground set $X\setminus X_0$, we know that for some non-negative integer $n$, $X\setminus X_0$ has disjoint subsets $X_1,\dots,X_n$ of size $d$ each such that if we write
    \[
    \GG'=\{G\in \GG: G\textnormal{ is a union of some of the sets $X_i$ ($i\in[n]$)}\},
    \]
    then $\sum_{G \in \mc{G}'} u(G) \geq \sum_{G \in \mc{G}} u'(G)$. 
    Here, similar to \eqref{eq: new weights}, $u'$ is defined by $u'(G)=\left(\frac{1}{d\binom{k}{d-1}m}\right)^{|G|/d} u(G)$.
    Let us write
    \[
    \FF'=\{F\in \FF: F\textnormal{ is a union of some of the sets $X_i$ ($i\in[0,n]$)}\}.
    \]
    It suffices to show that $\sum_{F \in \mc{F}'} w(F) \geq \sum_{F \in \mc{F}} w'(F)$.
    
    First, notice that 
    $$
    \sum_{F \in \mc{F}'} w(F)=\sum_{G \in \mc{G}'} u(G) \ge \sum_{G \in \mc{G}} u'(G),
    $$ 
    using the definition of $u$ and the property guaranteed by the inductive hypothesis, respectively. 
        Also, note that $\FF\setminus \FF_0= \bigcup_{y\in X_0}\{F\in\FF: y\in F\}$ and hence $$
            \sum_{F \in \mc{F}} w'(F)
            \leq \sum_{F \in \mc{F}_0} w'(F)+\sum_{y\in X_0}w'_y
            \leq \sum_{F \in \mc{F}_0} w'(F)+d\cdot w'_x
            \leq \sum_{F \in \mc{F}_0} w'(F)+d\binom{k}{d-1}m\cdot w'_{X_0}.
        $$        
        Using the inequalities above, and the definitions of $u$, $u'$, and $w'$, we have
        \begin{align*}
            \sum_{F \in \mc{F}'} w(F)
            \ge \sum_{G \in \mc{G}} u'(G) 
            &=\sum_{G\in\GG}\left(\frac{1}{d\binom{k}{d-1}m}\right)^{|G|/d}u(G)\\
            &=\sum_{F\in\FF_0}\left(\frac{1}{d\binom{k}{d-1}m}\right)^{|F|/d}w(F)+\sum_{F\in\FF_1}\left(\frac{1}{d\binom{k}{d-1}m}\right)^{(|F|-d)/d}w(F)\\
            &=\sum_{F\in \FF_0}w'(F)+\sum_{F\in \FF_1}w'(F)\cdot d\binom{k}{d-1}m\\
            &=\sum_{F \in \mc{F}_0} w'(F) + w'_{X_0}\cdot d\binom{k}{d-1}m
            \geq \sum_{F \in \mc{F}} w'(F),
        \end{align*}
        finishing the proof.
\end{proof}

We now prove the lower bounds in \cref{theorem: modular clique} with the help of \cref{thm:modularstructure}.
\begin{proof}[Proof of the lower bounds in \cref{theorem: modular clique}]
    Let $\FF$ be a family of $k$-element sets containing no $L$-clique of size $m$ for 
    \(L=(\zpz)\setminus\{a\},\)
    for some $a\in\zpz$.
    
    First assume that $a=0$. Let $k'=\lceil\frac{k}{p}\rceil p$, and let $\GG$ be the set system obtained by adding $k'-k$ new ``dummy'' elements to each set in $\FF$ in such a way that each new element is contained in precisely one set in $\GG$.
    (Note that we have enlarged our ground set by exactly $(k'-k)|\FF|$ elements.) Then, each set in $\GG$ has size $k'$ (a multiple of $p$), and $\GG$ has no $L$-clique of size $m$. 
    Applying \cref{thm:modularstructure} for $\GG$ and the uniform weighting $w(G)=1$ for all $G\in \GG$, we obtain, for some $s\geq 0$, disjoint subsets $X_1,\dots,X_s$ of the ground set such that each $X_i$ has size $p$ and
    \[\GG'=\{G\in \GG: G\textnormal{ is a union of some of the sets $X_i$}\}\]
    satisfies
    $$
    |\GG'|\geq \left(\frac{1}{p\binom{k'}{p-1}m}\right)^{k'/p}|\GG|=\Omega_k\left(m^{-\lceil k/p\rceil}|\FF|\right).
    $$
    Clearly, any two sets in $\GG'$ have intersection size a multiple of $p$. It follows that if we let $\FF'$ be the subfamily of $\FF$ corresponding to $\GG'$, obtained by removing the dummy elements from $\GG'$ (i.e., $\FF'$ contains those $F\in \FF$ for which there is some $G\in \GG'$ with $F\subseteq G$), then $|\FF'|=|\GG'|=\Omega_k(m^{-\lceil k/p\rceil}|\FF|)$, and any two sets in $\FF'$ have intersection size a multiple of $p$. This finishes the proof in the case $a=0$.

    Now consider the case $a\not=0$, so $a\in[p-1]$. (With a slight abuse of notation, we used $a$ to denote the smallest non-negative representative of the residue class $a\in\zpz$.) 
    In this case, form $\HH$ from $\FF$ by enlarging the ground set by $p-a$ new dummy elements, and adding these $p-a$ elements to each set in $\FF$. 
    Note that each set in $\HH$ has size $k''=k+(p-a)$, and each intersection size increased by exactly $p-a$ when adding the new elements, hence, $\HH$ does not contain an $L''$-clique of size $m$ for $L''=(\zpz)\setminus\{0\}$. 
    Using the $a=0$ case proved above, we find $\HH'\subseteq \HH$ of size $\Omega_k(m^{-\lceil k''/p\rceil}|\HH|)=\Omega_k(m^{-\lceil k/p\rceil-1}|\FF|)$ such that any two elements in $\HH'$ have intersection size a multiple of $p$. 
    Letting $\FF'$ be the collection of sets corresponding to $\HH'$ after removing the $p-a$ dummy elements, we see that $|\FF'|=\Omega_k(m^{-\lceil k/p\rceil-1}|\FF|)$ and any two sets in $\FF'$ have intersection size $a$~mod~$p$, finishing the proof of the lower bounds.
\end{proof}
\begin{remark}\label{remark: atomic structure}
    Let $k$ be a multiple of $p$ and $a=0$.
    Suppose $\mc{F}\subseteq \binom{[n]}{k}$ is a family where the intersection size of every two sets in $\mc{F}$ is a multiple of $p$ (this is a $0 \textnormal{ mod } p$-clique).
    \cref{thm:modularstructure} (with $m=2$) guarantees a subfamily $\mc{F}'\subseteq \mc{F}$ of cardinality $|\mc{F}'| \ge \big(2p\binom{k}{p-1}\big)^{-k/p} |\mc{F}|$ with the atomic structure, i.e., there exist disjoint atoms $X_1,\dots,X_s\subseteq [n]$ such that every set in $\mc{F}'$ is a union of $k/p$ atoms.
    In general, the argument above shows a similar (but slightly more complicated) atomic structure in any $a\textnormal{ mod } p$-clique (for any $a \in \zpz$ and $k \ge p$).
    Since the atomic structure is the most natural way to construct $a \textnormal{ mod } p$-cliques, this can be seen as a modular version of Deza's result~\cite{Deza74} which claims that any ``large'' $\{\ell\}$-clique is an $\{\ell\}$-sunflower (which is, analogously, the most natural structure forming an $\{\ell\}$-clique).

    We also note that, in a difference from the analogous $\{\ell\}$-clique setting, the ratio $\frac{|\mc{F}'|}{|\mc{F}|}$ between the size of the largest atomic subfamily and the size of the $a\textnormal{ mod } p$-clique can be small.
    Indeed, let $k$ be a multiple of $2p$ and consider an Hadamard matrix $A$ of order $4p$. Frankl and Odlyzko~\cite{FO83} constructed the following family $\mc{F}\subseteq \binom{[2k]}{k}$.
    Assume the first row of $A$ is all-ones, so all the other rows have $2p$ ones and $2p$ minus-ones.
    For every row except for the first one, construct two subsets of $[4p]$, corresponding to the ones and the minus-ones, respectively.
    Let $\mc{A}$ be the family of these $8p-2$ sets; one can show that the intersection size of two distinct sets is a multiple of $p$.
    Let $\mc{F}$ be a product system of $2k/(4p)=k/(2p)$ disjoint copies of $\mc{A}$ (obtained by taking $k/(2p)$ such systems $\mc{A}_1,\dots,\mc{A}_{k/(2p)}$ on disjoint ground sets, and taking the elements of $\FF$ to be all sets which are formed by taking a union $F_1\cup\dots\cup F_{k/(2p)}$ with $F_i\in\mc{A}_i$).
    Then, $\mc{F}$ is a $0\textnormal{ mod } p$-clique of size $(8p-2)^{k/2p}$.
    However, any family $\mc{F}'\subseteq \binom{[2k]}{k}$ with the atomic structure has cardinality at most $\binom{2k/p}{k/p}\leq 2^{2k/p}$.
    This means $|\mc{F}'|/|\mc{F}|\le\left(\frac{8}{4p-1}\right)^{k/2p}=p^{-\Omega(k/p)}$ (unless $p=2$), showing that the largest subfamily with the atomic structure must be small.
    For similar results, we refer the reader to the discussion of Theorem 2 in \cite{GST22}.
\end{remark}

The upper bounds in \cref{theorem: modular clique} basically follow from taking $\mc{F}$ to be $
\binom{[n]}{k}$ (with some modifications involving ``dummy elements'').

\begin{proof}[Proof of the upper bounds in \cref{theorem: modular clique}]
   Let $a\in\zpz$ be such that $L=(\zpz)\setminus \{a\}$.
    First assume that $a=k\!\!\mod p$.
    Consider $\mc{F} = \binom{[n]}{k}$ for some $n \ge k$. The Frankl--Wilson theorem implies that the largest $L$-clique in $\FF$ has size at most $\binom{n}{p-1}$, and therefore $\mc{F}$ contains no $L$-clique of size $m = \binom{n}{p-1}+1$. The Ray-Chaudhuri--Wilson theorem implies that the largest subfamily of $\mc{F}$ that avoids intersections of size in $L$ (i.e., with pairwise intersection sizes coming from $a,a+p,a+2p,\dots$, $k-p$) has cardinality $O_k(n^{\floor{k/p}})$. This is at most $O_k(n^{k-(p-1)\lfloor k/p\rfloor})=O_k(m^{-\floor{k/p}} |\mc{F}|)$.
    Thus, in the case $a=k\!\!\mod p$, we have $r(k,L,m)=O_k(m^{-\floor{k/p}})$ whenever $m=\binom{n}{p-1}+1$ for some $n \ge k$, and hence $r(k,L,m)=O_k(m^{-\floor{k/p}})$ for all $m$.

    Now consider the case $a\ne k\!\!\mod p$.
    Let $k' < k$ be the largest integer with $k'\mod{p}= a$; we may assume $k' \ge p$ as otherwise the bound $O_k(m^{-{\floor{k/p}+1}})$ is trivial.
    By the definition of $r(k',L,m)$, for any $\varepsilon > 0$, there exists a family $\mc{F}$ of $k'$-element sets such that it contains no $L$-clique of size $m$ and the largest subfamily of $\mc{F}$ that avoids intersections of size in $L$ has cardinality at most $(r(k',L,m)+\varepsilon)|\mc{F}|$.
    Then, we add $k-k'$ new ``dummy'' elements to each set in $\mc{F}$ to acquire a system $\mc{G}$ of $k$-element sets while maintaining the intersections between any two distinct sets.
    Hence, $\mc{G}$ contains no $L$-clique of size $m$ and the largest subfamily of $\mc{G}$ that avoids intersections of size in $L$ has cardinality at most $(r(k',L,m)+\varepsilon)|\mc{G}|$.
    This shows $r(k,L,m) \le r(k',L,m)$.
    In addition, our discussion above on the case $a=k\!\!\mod p$ implies $r(k',L,m)=O_k(m^{-\floor{k'/p}})$, so $r(k,L,m)=O_k(m^{-\floor{k'/p}})=O_k(m^{-\floor{k/p}+1})$.

    To summarize, we get that when $L=(\zpz)\setminus \{a\}$, then $r(k,L,m)=O_k(m^{-\floor{k/p}+1})$ holds for all $a \in \zpz$, and $r(k,L,m)=O_k(m^{-k/p})$ when $a= k \!\!\mod p= 0$.
\end{proof}

When $L$ corresponds to only one residue, we can show the following upper bound.
The proof is similar to the above except that we use \cref{theorem: excluding one residue} instead of the Ray-Chaudhuri--Wilson theorem.
\begin{theorem}\label{theorem: modular clique upper bound}
    Let $p$ be a prime, $k$ a positive integer and $L=\{a\} \subset \zpz$.
    Then $r(k,L,m)=O_k\big(m^{-(p-1)k/p+p^2} \big)$.
\end{theorem}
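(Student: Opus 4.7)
The plan is to follow the template of the upper-bound proof of \cref{theorem: modular clique}, but replacing the Ray-Chaudhuri--Wilson bound with the stronger bound provided by \cref{theorem: excluding one residue}. I would split into two cases depending on whether $k \bmod p$ equals $a$ or not; the main case is $k \bmod p \ne a$, while the other is handled by a trivial dummy-element reduction.

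First, suppose $k \bmod p \ne a$. I would take $\mc{F} = \binom{[n]}{k}$ with $n = m-1 \ge k$ (the range $m \le k$ is absorbed into the $O_k$ hidden constant, since then $r(k,L,m) \le 1$ while the right-hand side is bounded below by some positive constant depending only on $k$ and $p$). Since $k \bmod p \notin L = \{a\}$, the Frankl--Wilson theorem applied with $s=1$ implies that every $L$-clique in $\mc{F}$ has size at most $\binom{n}{1} = n < m$, so $\mc{F}$ contains no $L$-clique of size $m$. On the other hand, \cref{theorem: excluding one residue} applied to any subfamily $\mc{F}' \subseteq \mc{F}$ avoiding intersections of size $\equiv a \pmod p$ gives $|\mc{F}'| = O_k(n^{k/p + (p-1)(p-2)})$. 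Since $|\mc{F}| = \Theta_k(n^k)$ and $n = \Theta(m)$, we get
$$
\frac{|\mc{F}'|}{|\mc{F}|} = O_k\!\left(n^{-(p-1)k/p + (p-1)(p-2)}\right) = O_k\!\left(m^{-(p-1)k/p + p^2}\right),
$$
using $(p-1)(p-2) \le p^2$.

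For the remaining case $k \bmod p = a$, set $k' = k - 1$; this satisfies $k' \bmod p = a - 1 \not\equiv a \pmod p$, and we may assume $k \ge 2$ (smaller $k$ being again trivial). Applying the main case with $k'$ yields a family $\mc{F}_0$ of $k'$-element sets with no $L$-clique of size $m$ such that every $L$-avoiding subfamily has size $O_k(m^{-(p-1)k'/p + (p-1)(p-2)}) \cdot |\mc{F}_0|$. I would then form $\mc{F}$ by adjoining to each set of $\mc{F}_0$ a fresh dummy element unique to that set; this leaves all pairwise intersections unchanged, so $\mc{F}$ still has no $L$-clique of size $m$ and every $L$-avoiding subfamily of $\mc{F}$ is in size-preserving bijection with one in $\mc{F}_0$. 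A brief calculation gives $-(p-1)(k-1)/p + (p-1)(p-2) = -(p-1)k/p + (p-1)/p + (p-1)(p-2) \le -(p-1)k/p + p^2$ (the inequality $(p-1)/p + (p-1)(p-2) \le 1 + p^2 - 3p + 2 \le p^2$ holding for $p \ge 2$), so the overall ratio is $O_k(m^{-(p-1)k/p + p^2})$. There is no substantive obstacle in this proof; the content is essentially the observation that \cref{theorem: excluding one residue} can be plugged in place of Ray-Chaudhuri--Wilson, and the only delicate point is absorbing the $(p-1)/p$ loss from the reduction step into the $p^2$ slack, which is automatic.
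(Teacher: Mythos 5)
Your proof is correct and follows essentially the same route as the paper: plug \cref{theorem: excluding one residue} in place of Ray-Chaudhuri--Wilson in the template from \cref{theorem: modular clique}, handling the two sub-cases $k \bmod p \ne a$ and $k \bmod p = a$ by the same dummy-element reduction. The only cosmetic difference is that you make the dummy-element step and the absorption of the $(p-1)/p$ loss explicit, whereas the paper invokes $r(k,L,m)\le r(k',L,m)$ by reference to the earlier argument; the content is identical.
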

\begin{proof}
    First assume that $a\neq k\textnormal{ mod }p$ and consider $\mc{F} = \binom{[m-1]}{k}$.
    The Frankl--Wilson theorem implies that $\mc{F}$ contains no $L$-clique of size $m$, and \cref{theorem: excluding one residue} implies that the largest subfamily of $\mc{F}$ that avoids intersections of size in $L$ has cardinality $O_k(m^{\lceil k/p\rceil+(p-1)(p-2)})=O_k(m^{-\floor{(p-1)k/p}+(p-1)(p-2)} \cdot |\mc{F}|)$.
    This shows that $r(k,L,m)=O_k(m^{-\floor{(p-1)k/p}+(p-1)(p-2)}) = O_k(m^{-(p-1)k/p+p^2})$ whenever $a \neq k \!\!\mod {p}$.

    Now, assume $a=k\!\!\mod {p}$. Then  $k'=k-1$ has $a\not = k'\!\!\mod p$. 
    Similarly to the proof of the upper bounds in \cref{theorem: modular clique}, we have 
    \(
        r(k,L,m) \le r(k',L,m) 
        = O_k\big (m^{-\floor{(p-1)k'/p}+(p-1)(p-2)} \big)
        = O_k\big(m^{-(p-1)k/p+p^2}\big).
    \)
\end{proof}
We note that almost the same argument shows that $r(k,L,m) = O_k\left(m^{-\frac{(p-1)k}{p|L|} + p^2}\right)$ for every $L\subsetneqq \mb{Z}/p\mb{Z}$. Indeed, the only difference from the above proof is that in the first paragraph we need to consider the case ${k\!\!\mod p\not \in L}$, and the application of the Frankl--Wilson theorem gives an upper bound of $m^{|L|}$ (instead of $m$) on the size of the largest $L$-clique. Then, in the second paragraph, we take $k'\leq k$ to be maximal with $k'\!\!\mod p\not \in L$.

\subsection{Fractional chromatic number of \texorpdfstring{$G_{\mc{F}}$}{G F} and quantum computing} \label{section: fractional chromatic}
Recall that given $L\subseteq \zpz$ and a set system $\mc{F}$ of $k$-element sets, the associated graph $G_{\mc{F}}$ has vertex set $\mc{F}$ and two vertices $F,F' \in \mc{F}$ are connected precisely when $|F\cap F'| \in L$.
In \cref{theorem: modular clique}, we showed if $k$ is even, $L$ is the set of odd integers, and $\mc{F}$ contains no $L$-clique of size $m$, then the independence number of $G_{\mc{F}}$ satisfies $\alpha(G_{\mc{F}}) = \Omega_k(m^{-k/2} |\mc{F}|)$.
As mentioned in \cref{sec:intro_fermionic}, the same bound $m^{k/2}$ also works for the so-called fractional chromatic number of $G_\mc{F}$, which is defined as follows.

\begin{definition}\label{def:fractional_chrome}
    Let $G$ be a graph.
    A \emph{fractional colouring} of size $s\ge1$ of $G$ is a probability distribution over all the independent sets of $G$ such that for each vertex $v$, if we sample an independent set $I$ from this distribution, then $\mathbb{P}(v\in I)\ge1/s$. 
    The {\em fractional chromatic number} $\chi_f(G)$ is defined to be the smallest real number $s \in [1,\infty)$ such that $G$ admits a fractional colouring of size $s$.
\end{definition}

We note that there are many other equivalent definitions of $\chi_f(G)$. Writing $\mc{I}(G)$ for the set of all independent sets of $G$, the definition above can easily be restated in terms of a linear program. Then, using the strong duality of linear programming, we see that $\chi_f$ is the optimal value of the program
\begin{equation}\label{eq: dual fractional}
    \text{maximize} \sum_{v \in V(G)} w_v \quad \text{s.t. } \sum_{v \in I} w_v \le 1\quad\text{for all $I\in \II(G)$, $w_v\geq 0$ for all $v\in V(G)$.}
\end{equation}

It is easy to see that $|V(G)|/\alpha(G) \le \chi_f(G) \le \chi(G)$ for any graph $G$, where $\chi(G)$ denotes the usual (non-fractional) chromatic number.

As mentioned above, our results can be strengthened to give bounds on the fractional chromatic number of $G_\FF$. In particular, we get the following result, which, as we will explain later, has applications in quantum computing.

\begin{theorem}\label{theorem: modular clique fractional chromatic number}
    Let $k$ be a positive even integer and let $L$ be the set of all odd integers.
    Suppose that $\mc{F}$ is a family of $k$-element sets that contains no $L$-clique of size $m$.
    Then, the associated graph $G_\mc{F}$ has fractional chromatic number at most $O_k(m^{k/2})$.
\end{theorem}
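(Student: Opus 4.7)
My plan is to deduce the theorem from \cref{thm:modularstructure} via LP duality for the fractional chromatic number. Recall that, by linear programming duality applied to \cref{def:fractional_chrome} (or equivalently to the program \eqref{eq: dual fractional}), one has
\[
\chi_f(G) = \max_{w\ge 0}\frac{\sum_{v \in V(G)} w_v}{\max_{I \in \mc{I}(G)} \sum_{v\in I} w_v}.
\]
Hence, to show $\chi_f(G_{\mc F}) \le O_k(m^{k/2})$, it suffices to prove that for every non-negative weighting $w:\mc F \to \mathbb{R}_{\ge 0}$, there exists an independent set $\mc F^* \subseteq \mc F$ in $G_{\mc F}$ with
\[
\sum_{F \in \mc F^*} w(F) \;\ge\; \Omega_k(m^{-k/2}) \cdot \sum_{F \in \mc F} w(F).
\]

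The plan is to take this independent set to be the family $\mc F'$ produced by \cref{thm:modularstructure} applied with $d = 2$. Since $k$ is even and the forbidden intersection sizes are precisely the odd integers, the hypothesis of \cref{thm:modularstructure} (with $d=2$ and $L = (\mathbb{Z}/2\mathbb{Z})\setminus\{0\}$) is exactly our assumption that $\mc F$ has no $L$-clique of size $m$. The theorem therefore produces disjoint $2$-element ``atoms'' $X_1,\dots,X_n$ such that the subfamily
\[
\mc F^* = \{F \in \mc F : F \text{ is a union of some of the } X_i\}
\]
satisfies
\[
\sum_{F \in \mc F^*} w(F) \;\ge\; \sum_{F \in \mc F} w'(F) \;=\; (2km)^{-k/2} \sum_{F \in \mc F} w(F),
\]
where in the last step I use the definition \eqref{eq: new weights} together with the fact that $|F| = k$ for every $F \in \mc F$, so $w'(F) = (2\binom{k}{1}m)^{-k/2} w(F) = (2km)^{-k/2} w(F)$.

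It remains to verify that $\mc F^*$ is an independent set of $G_{\mc F}$. This is immediate from the atomic structure: if $F_1,F_2 \in \mc F^*$, then $F_1 \cap F_2$ is also a union of some $X_i$'s, hence has even cardinality, and therefore $|F_1 \cap F_2| \notin L$. Combining this with the weight bound and the LP duality reformulation above yields $\chi_f(G_{\mc F}) \le (2km)^{k/2} = O_k(m^{k/2})$, as desired. There is no serious obstacle here, since all the real work is already carried out in \cref{thm:modularstructure}; the only thing one must be a little careful about is noting that the theorem's conclusion, proved by induction on the ground set, naturally handles arbitrary weightings and therefore directly feeds into the fractional (rather than just integer) chromatic bound.
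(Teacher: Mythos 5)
Your proposal is correct and follows essentially the same route as the paper: the paper likewise invokes the weighted statement of \cref{thm:modularstructure} (via ``the same proof as for \cref{theorem: modular clique}'' with $d=2$, $a=0$) to produce, for every non-negative weighting, an independent set carrying an $\Omega_k(m^{-k/2})$ fraction of the total weight, and then concludes via the LP-dual characterization~\eqref{eq: dual fractional}. Your spelled-out computation of the constant $(2km)^{-k/2}$ and the observation that no dummy-element step is needed (since $k$ is even and the forbidden residue is $0$) are just the explicit unwinding of the same argument.
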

\begin{proof}
    The same proof as for \cref{theorem: modular clique} shows that for any non-negative weighting $(w(F))_{F \in \mc{F}}$, there exists a subfamily $\mc{F}'\subseteq \mc{F}$ with 
    $$
        \sum_{F \in \mc{F}'} w(F) \ge \Omega_k(m^{-k/2}) \cdot \sum_{F \in \mc{F}} w(F)
    $$
    such that $\mc{F}'$ is an independent set of $G_{\mc F}$.
    Using the equivalent definition of the fractional chromatic number given by \eqref{eq: dual fractional}, it follows that the fractional chromatic number of $G_{\mc{F}}$ is $O_k(m^{k/2})$. 
\end{proof}

In the same way as above, any bound on $r(k,L,m)$ or $r_{\textnormal{sf}}(k,L,m)$ in this paper also works for the fractional chromatic number of the associated graph $G_{\mc{F}}$. Interestingly, this is no longer true if one replaces ``fractional chromatic number'' by ``chromatic number''.
\begin{theorem}
    Let $k\geq 4$ be an even integer, and let $L$ be the set of all odd integers. 
    Then, for infinitely many positive integers $N$, there exists a family $\mc{F}$ of $N$ sets of size $k$ such that $\mc{F}$ contains no $L$-clique of size $2^{k/2}+1$, yet the chromatic number of the associated graph $G_{\mc{F}}$ is $\Omega_k(\log N)$.
\end{theorem}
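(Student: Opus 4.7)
The plan is to establish the statement by a probabilistic construction. Fix $k\ge 4$ even and let $N$ be a large positive integer. I would choose an auxiliary parameter $n=n(N,k)\asymp N^{1/(k-1)}$ and take $\mathcal{F}$ to be a uniformly random collection of $N$ distinct $k$-subsets of $[n]$. The two required properties will be verified by first-moment arguments.

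For the bound $\omega(G_\mathcal{F})\le 2^{k/2}$, I would invoke the Frankl--Wilson theorem (with $p=2$ and $L=\{1\}$, using that $k\bmod 2 = 0\notin L$): every pairwise odd-intersecting subfamily of $\binom{[n]}{k}$ has at most $n$ elements. This lets one upper-bound the total number of potential $(2^{k/2}+1)$-cliques in the odd-intersection graph on $\binom{[n]}{k}$, and a first-moment calculation shows that the expected number of such cliques inside $\mathcal{F}$ is less than $1/2$ for the chosen $n$. Hence with positive probability $\mathcal{F}$ contains no $L$-clique of size $2^{k/2}+1$.

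For the lower bound $\chi(G_\mathcal{F})=\Omega_k(\log N)$, I would use $\chi(G_\mathcal{F})\ge |\mathcal{F}|/\alpha(G_\mathcal{F})$ and aim to show $\alpha(G_\mathcal{F})\le N/\log N$ with positive probability. An independent set in $G_\mathcal{F}$ is a pairwise even-intersecting subfamily of $\mathcal{F}$. Since the Frankl--Wilson theorem gives no useful bound in the even regime (because $k\bmod 2=0$ is an allowed residue), I would rely on the structural fact that the largest pairwise even-intersecting subfamilies of $\binom{[n]}{k}$ have size $\Theta_k(n^{k/2})$ and arise essentially from choosing a perfect matching of $[n]$ and taking all unions of $k/2$ pairs from it. For $n=N^{1/(k-1)}$ the expected overlap of $\mathcal{F}$ with any fixed matching-union family is $\Theta_k(N^{(k-2)/(2(k-1))})$, and a Chernoff-type tail bound together with a union bound over the $(n-1)!!=e^{\Theta(n\log n)}$ matchings yields $\alpha(G_\mathcal{F})\le N/\log N$ with positive probability. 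This gives $\chi(G_\mathcal{F})\ge\Omega_k(\log N)$ (and in fact much more).

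The main obstacle will be the chromatic step, specifically rigorously ruling out large non-maximal pairwise even-intersecting subfamilies. Either one needs a structural-stability lemma saying every sufficiently large pairwise even-intersecting subfamily of $\binom{[n]}{k}$ is essentially contained in a matching-union family (so the union bound above suffices), or one needs a direct enumeration of all large such families. Handling these subtleties carefully is the technical heart of the proof; an alternative avenue would be an explicit construction, e.g.\ realizing a Mycielski-type triangle-free graph with chromatic number $\Omega(\log N)$ as $G_\mathcal{F}$ by decomposing its adjacency matrix as $XX^T\pmod 2$ with 0/1 rows of weight exactly $k$ (padding via private columns), but this route runs into the obstacle that such $H$ cannot have bounded maximum degree.
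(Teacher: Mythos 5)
Your approach (a random $N$-subset of $\binom{[n]}{k}$ with $n\asymp N^{1/(k-1)}$) is genuinely different from the paper's, which gives an explicit construction: a ground set of $3n$ points grouped into triples $\{a_i,x_i,y_i\}$, with $\mc{F}$ consisting of all $k$-sets that meet each triple in $\emptyset$, $\{a_i,x_i\}$ or $\{a_i,y_i\}$, so $|\mc{F}|=2^{k/2}\binom{n}{k/2}$. The clique bound there comes from a theorem of Tuza on cross-intersecting set pairs, and the chromatic lower bound comes from a heavy/light-edge counting argument; no randomness, no Frankl--Wilson, and no stability result for even-intersecting families are needed. Unfortunately, your proposal as written has two genuine gaps that prevent it from being repaired along the same lines.

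\emph{Gap 1: the first-moment calculation for the clique bound fails for your choice of $n$.} With $n\asymp N^{1/(k-1)}$, each fixed $(k{-}1)$-set $A$ lies in $n-k+1$ members of $\binom{[n]}{k}$, and the expected number of these landing in a uniformly random $N$-subset $\mc{F}$ is $(n-k+1)N/\binom{n}{k}=\Theta_k(N/n^{k-1})=\Theta_k(1)$. Any $r$ such sets over a common $(k{-}1)$-element kernel pairwise intersect in $A$, hence in odd size (as $k$ is even), so they form an $r$-clique. Summing over all $\binom{n}{k-1}=\Theta_k(n^{k-1})=\Theta_k(N)$ choices of $A$, the expected number of $(2^{k/2}+1)$-cliques produced this way alone is $\Theta_k(N)$, not $o(1)$; in fact with high probability $\mc{F}$ contains many such cliques. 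You would have to increase $n$ substantially (roughly to $n\gtrsim N^{1/(k-1)+\epsilon}$ with $\epsilon\ge \Theta(1/((k-1)2^{k/2}))$) just to kill these sunflower cliques, and then still handle cliques over smaller odd kernels; as $n$ grows, $G_\mc{F}$ becomes sparser and the chromatic lower bound becomes harder to extract, so the parameter window — if any exists — is far from clear.

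\emph{Gap 2: the stability step is an unproved lemma.} You explicitly reduce the chromatic lower bound to the statement that every sufficiently large pairwise even-intersecting subfamily of $\binom{[n]}{k}$ is essentially a ``matching-union'' (atomic) family, so that a union bound over $(n-1)!!$ matchings suffices. This is not a known theorem, and you acknowledge it is ``the technical heart.'' The relevant result in the paper (\cref{thm:modularstructure}) only guarantees that every even-intersecting family contains a \emph{subfamily} of proportion $\Omega_k(1)$ with the atomic structure, which is far weaker than the classification you need (it does not control the part of the family outside the atomic subfamily, nor show the family is close to a single matching-union family). Without a proved stability statement — or an explicit enumeration of large even-intersecting families — the union bound has nothing to run over, and the independence-number bound is unsupported. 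So both halves of the argument are open, and the proposal does not constitute a proof.
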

\begin{proof}
    Let $n > k/2$ be a large integer. Consider a set $V$ of $3n$ distinct points, labelled as $a_i,x_i,y_i$ for $i\in[n]$.    
    Let $\mc{F}$ be the family of all $k$-element sets $F$ such that for each $i \in [n]$, $F\cap \{a_i,x_i,y_i\}$ is either empty, $\{a_i,x_i\}$ or $\{a_i,y_i\}$.
    It is easy to see that $|\FF|=2^{k/2}\binom{n}{k/2}$.

    We claim that $\mc{F}$ contains no $L$-clique of size $2^{k/2}+1$. Indeed, suppose that $F_1,\dots,F_r \in \mc{F}$ form an $L$-clique, we will show that $r \le 2^{k/2}$.
        For each $i \in [r]$, define $A_i:=\{j \in [n]: \{a_j,x_j\}\subseteq F_i\}$ and $B_i:=\{j\in[n]: \{a_j,y_j\}\subseteq F_i\}$.
        We know that $A_i\cap B_i=\emptyset$ and $|A_i|+|B_i|=k/2$ for all $i \in [r]$.
        Moreover, for all $i,i'\in [r]$ distinct, we have $|F_i\cap F_{i'}|\equiv |A_i\cap B_{i'}|+|A_{i'}\cap B_i|\modp{2}$. 
        In particular, since $|F_i\cap F_{i'}|$ is odd, we have $|A_i\cap B_{i'}|+|A_{i'}\cap B_{i}| \ge 1$ for all distinct $i,i' \in [r]$.
        Tuza~\cite[Theorem 3]{Tuza87} showed that under these conditions (namely, if $A_1,\dots,A_r,B_1,\dots,B_r$ are finite sets such that $|A_{i}\cap B_{i'}|+|A_{i'}\cap B_i|$ is zero if and only if $i=i'$), we have $\sum_{i=1}^r \left(\frac{1}{2}\right)^{|A_i|+|B_i|}\leq 1$. It follows that $r\leq 2^{k/2}$, as claimed.

    To lower bound the chromatic number of $G_\mc{F}$, suppose that $\FF=\FF_1\cup\dots\cup\FF_\ell$ is a partition of $\FF$ such that every $\mc{F}_i$ avoids intersections of size in $L$.
    (This corresponds one-to-one to a proper colouring of $G_{\mc{F}}$).
    It is enough to show that $\ell = \Omega_k(\log n)$.

    For every $j \in [\ell]$, we say that a pair $a_ix_i$ or $a_iy_i$, is {\em $j$-heavy} if it is contained in at least $n^{(k-3)/2}$ sets in $\FF_j$, and {\em $j$-light} otherwise.
    We decompose $\FF_j$ into $\FF_j^{\text{heavy}} \cup \FF_j^{\text{light}}$, where $\FF_j^{\text{heavy}}$ consists of all $F \in \FF_j$ that is formed by $k/2$ $j$-heavy pairs and $\FF_j^{\text{light}}$ consists of all $F \in \FF_j$ that contains at least one $j$-light pair.
    Furthermore, we define $\FF^{\text{heavy}}:=\bigcup_j \FF_j^{\text{heavy}}$ and $\FF^{\text{light}}:=\bigcup_j \FF_j^{\text{light}}$ 
    Observe that $|\FF_j^{\text{light}}| \le 2n\cdot n^{(k-3)/2} = O_k(n^{-1/2} |\FF|)$ for each $j \in [\ell]$, so 
    $$
    |\FF^{\text{light}}| = O_k(\ell n^{-1/2} |\FF|).
    $$
    
    To lower bound $|\FF^{\text{light}}|=|\FF \setminus \FF^{\text{heavy}}|$, we notice that if $n$ is large enough, then for every $i \in [n]$ and every $j \in [\ell]$, $a_ix_i$ and $a_iy_i$ cannot be both $j$-heavy.
    Indeed, suppose that $a_ix_i$ and $a_iy_i$ are both $j$-heavy, and fix any $F \in \FF_j$ that contains $a_ix_i$.
    For every $F' \in \FF_j$ that contains $a_iy_i$, we know $a_i \in F\cap F'$, so it must be that $|F \cap F'| \ge 2$.
    In other words, there exists $i'\ne i$ such that $a_{i'} \in F\cap F'$.
    Hence $F'$ has at most $(k/2-1)\cdot 2 \cdot 2^{k/2-2}\binom{n-2}{k/2-2}=o_k(n^{(k-3)/2})$ possibilities, contradicting that $a_iy_i$ is $j$-heavy.
    This means, for every $i \in [n]$ and every $j \in [\ell]$, the triple $a_ix_iy_i$ has three possible {\em $j$-types}: (1) $a_ix_i$ and $a_iy_i$ are both $j$-light, (2) $a_ix_i$ is $j$-heavy and $a_iy_i$ is $j$-light, (3) $a_ix_i$ is $j$-light and $a_iy_i$ is $j$-heavy.
    By the pigeonhole principle, there is a set $T$ of at least $n/3^\ell$ triples such that for every $j \in [\ell]$, these triples have the same $j$-type.
    
    Take any $k/2$ of the triples in $T$, say $(a_ix_iy_i)_{i\in I}$ ($|I|=k/2$).
    Note that there are $2^{k/2}$ sets in $\FF$ formed by elements from $(a_ix_iy_i)_{i\in I}$.
    On the other hand, at most two sets among these are in $\FF^{\text{heavy}}$: these two possibilities are $\{a_i,x_i:i\in I\}$ and $\{a_i,y_i:i\in I\}$ as $(a_ix_iy_i)_{i\in I}$ have the same $j$-type for all $j$.
    Applying this for all choices of $I$ (ranging over $k/2$-element subsets of $T$), we get 
    $$
    |\FF^{\text{light}}|=|\FF\setminus \FF^{\text{heavy}}|\geq \binom{n/3^\ell}{k/2} (2^{k/2}-2) \ge \binom{n/3^\ell}{k/2}=\Omega_k\left(3^{-\ell\cdot k/2}|\FF|\right).
    $$
    But we have also seen that $|\mc{F}^{\textnormal{light}}| = O_k(\ell n^{-1/2}|\mc{F}|)$.
    This proves $\ell=\Omega_k(\log n)$ by rearranging.
\end{proof}
As we explained in \cref{sec:intro_fermionic}, the upper bounds on the fractional chromatic number in the modular setting find applications in quantum computing for shadow tomography tasks; we now give more details on this. 
 King, Gosset, Kothari, and Babbush, in \cite[Section IV]{king2025triply}, reduced the shadow tomography problem for local fermionic observables to a graph theoretic question, as follows.

\begin{question}\label{qs:fermionic}
    Let $k\ge1$ and let $L$ be the set of all odd integers.
    Suppose that a set system $\mc F\subseteq\binom{[2n]}{2k}$ contains no $L$-clique of size $m$.
    Given $\mc F$ as input, how efficiently can we construct a fractional colouring of the associated graph $G_{\mc F}$ and what is the size of this fractional colouring?
\end{question}

\cref{theorem: modular clique fractional chromatic number} immediately implies the existence of a fractional colouring of size $O_k(m^k)$. Note that we proved this theorem via \cref{thm:modularstructure} whose proof can easily be turned into an efficient algorithm. Using such an algorithm, we will show that \cref{theorem: modular clique fractional chromatic number} can also be made algorithmic. 
Before that, we remark that \cite{king2025triply} designed efficient protocols for fermionic shadow tomography assuming a satisfactory answer to Question \ref{qs:fermionic}.

\begin{theorem}[{\cite[Lemma 6]{king2025triply}}]\label{thm:fermionic_KGKB}
    Let $\varepsilon\in(0,1]$ and $m=4/\varepsilon^2$.
    Suppose Question \ref{qs:fermionic} has a fractional colouring of size $s$ that can be classically sampled from with running time $T$.
    Then there is a shadow tomography algorithm for $k$-local fermionic observables on $n$ qubits that performs only two-copy Clifford measurements, uses classical running time $\mathrm{poly}(n^k,T,1/\varepsilon)$, and
    requires 
    $$
        O\left(\left(\frac1{\varepsilon^2}+s\right)\cdot\frac{k\log(en/k)}{\varepsilon^2}\right)
    $$
    total copies of the unknown quantum states.
\end{theorem}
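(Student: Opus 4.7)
The plan is to convert the given fractional colouring of $G_{\mc F}$ into a randomised joint-measurement protocol and to analyse it with standard concentration inequalities. Under the natural Jordan--Wigner style correspondence, a $2k$-uniform set $S\subseteq[2n]$ corresponds to a degree-$2k$ Majorana product $O_S$, and two such operators commute precisely when $|S\cap S'|$ is even, i.e.\ precisely when $S$ and $S'$ are non-adjacent in $G_{\mc F}$. Thus every independent set $I$ in $G_{\mc F}$ gives a commuting family of local fermionic observables, which I expect to be simultaneously diagonalisable by a polynomial-size Clifford circuit that is efficiently computable from $I$. Applying this circuit to a pair of copies of the unknown state $\rho$ (the two-copy structure being required to read out the signed expectation of each Majorana product) should yield, from a single computational-basis outcome, an unbiased and uniformly bounded estimator $\widehat{o}_v$ for $\mathrm{tr}(O_v\rho)$ for every $v\in I$.

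The algorithm would then repeat $R$ times the procedure of drawing $I_t$ from the fractional colouring distribution (classical cost $T$ per draw) and performing the associated two-copy Clifford measurement; its output for each observable $v$ is the empirical average of $\widehat{o}_v$ over the rounds satisfying $v\in I_t$. Since $\Pr[v\in I_t]\ge 1/s$, a Chernoff bound gives that with high probability each observable is measured at least $R/(2s)$ times, after which Hoeffding's inequality combined with a union bound over the $N_{\mathrm{obs}}\le\binom{2n}{2k}$ observables yields $\varepsilon$-accuracy as soon as $R/s\gtrsim k\log(en/k)/\varepsilon^2$. Each round uses two copies of $\rho$, so this already accounts for the $s$-dependent part of the stated sample bound; the additive $k\log(en/k)/\varepsilon^4$ term should correspond to a baseline cost in the regime where $s$ is very small but the variance of the two-copy Majorana estimator is itself of order $1/\varepsilon^2$ and has to be controlled separately. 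The classical runtime is dominated by drawing $R$ samples (cost $RT$), compiling $R$ Clifford circuits of size $\mathrm{poly}(n)$, and aggregating estimators for the $O(n^{2k})$ observables, giving the claimed $\mathrm{poly}(n^k,T,1/\varepsilon)$ bound.

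The main obstacle is the quantum-mechanical step: for the combinatorial bound on $s$ to translate into a sample complexity guarantee, one has to construct an explicit two-copy Clifford measurement whose outcome produces, simultaneously and from a single measurement, an unbiased estimator of $\mathrm{tr}(O_v\rho)$ for \emph{every} $v\in I$, with variance bounded uniformly in $n$, $k$, and the choice of $I$. One then has to verify that this circuit is of size polynomial in $n$ and can be compiled efficiently from the combinatorial description of $I$. This requires fermionic-Gaussian and stabiliser-code technology that lies outside the scope of the combinatorial methods used earlier in the paper, and is precisely the step where the specific ``two-copy Clifford'' measurement model of \cite{king2025triply} becomes essential. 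A secondary, but much simpler, point is that the fractional colouring is given only as an abstract probability distribution; the hypothesis that it can be sampled in time $T$ is what allows its cost to be cleanly folded into the overall classical runtime.
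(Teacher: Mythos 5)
The paper does not prove this statement: it is imported verbatim from \cite{king2025triply} (their Lemma~6), and \cref{thm:fermionic_KGKB} appears here only as a black box that, together with the new \cref{lem:qs:fermionic}, yields \cref{thm:fermionic_new}. So there is no ``paper's own proof'' to compare against.

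Your sketch is a reasonable high-level reconstruction of the argument in \cite{king2025triply}, and you correctly identify the genuine obstacle: the core of the proof is the construction of a two-copy Clifford measurement that simultaneously and unbiasedly estimates $\mathrm{tr}(O_v\rho)$ for all $v$ in a commuting (i.e.\ independent) set, with uniformly bounded variance, plus the verification that the corresponding circuit is $\mathrm{poly}(n)$-size and efficiently compilable from the combinatorial data of $I$. This is fermionic-Gaussian/stabiliser machinery that is not developed in the present paper and cannot be supplied by the combinatorial tools used elsewhere here. Your concentration/union-bound bookkeeping for the sample complexity is plausible, though the precise form of the bound (in particular the additive $O(\varepsilon^{-2})$ inside the parenthesis) depends on variance estimates for the two-copy estimator that you can only guess at without the quantum-mechanical details. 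In short: as a blind reconstruction it is a sensible outline with the hard step honestly flagged as unresolved, but it is not, and is not expected to be, a self-contained proof of a result the paper itself only cites.
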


Note that the classical running time $\mathrm{poly}(n^k)$ in \cref{thm:fermionic_KGKB} (and \cref{lem:qs:fermionic} below) is inevitable, as the number of $k$-local fermionic observables is $\binom{2n}{2k}$ and the algorithm, even outputting the expectation value for each one of them, requires running time $\binom{2n}{2k}=\mathrm{poly}(n^k)$. The major term in \cref{thm:fermionic_KGKB} is the number of copies of the quantum states, which faces limit on the near-term quantum devices.

To address Question \ref{qs:fermionic}, the results in this paper together with a standard reduction using the \emph{multiplicative weight update method}~\cite{arora2012multiplicative} give the following algorithmic result. 

\begin{lemma}\label{lem:qs:fermionic}
    In Question \ref{qs:fermionic}, we have a fractional colouring of size $s=O_k(m^k)$ that can be classically sampled from with running time $\mathrm{poly}(n^k)$.
\end{lemma}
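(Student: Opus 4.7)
The plan is to combine the algorithmic content of \cref{thm:modularstructure} with the multiplicative weight update (MWU) method in order to turn the existential bound from \cref{theorem: modular clique fractional chromatic number} into an explicit, efficiently sampleable fractional colouring of size $s=O_k(m^k)$. Roughly, \cref{theorem: modular clique fractional chromatic number} is equivalent (by LP duality, see \eqref{eq: dual fractional}) to having, for every weighting $w$ on $\mc F$, an independent set of weight at least $\frac{1}{s}w(\mc F)$; such a \emph{weighted independent set oracle} is exactly what MWU needs to produce a distribution witnessing the fractional colouring.

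First, I would observe that the inductive proof of \cref{thm:modularstructure} is constructive. At each recursive step we pick an element $x$ of maximum weight $w'_x$, find a $d$-subset $X_0\ni x$ with $w'_{X_0}\ge (\binom{k}{d-1}m)^{-1}w'_x$ (which needs only $O(n^{d-1}|\mc F|)$ work), form the reduced families $\FF_0,\FF_1,\GG$ and recurse. Since $|X|$ decreases by $d$ each time, the recursion halts in $O(n)$ steps, and applying it with $d=2$ and the $k$ of \cref{theorem: modular clique fractional chromatic number} replaced by $2k$ yields, for any non-negative weighting $w$ on $\mc F\subseteq\binom{[2n]}{2k}$, an independent set $\mc F'$ of $G_{\mc F}$ (a subfamily with atomic structure) satisfying
\[
  \sum_{F\in\mc F'} w(F)\;\ge\;\frac{1}{s}\sum_{F\in\mc F} w(F)\qquad\text{with } s=O_k(m^k),
\]
computed in time $\mathrm{poly}(n^k)$ (note $|\mc F|\le\binom{2n}{2k}=\mathrm{poly}(n^k)$).

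Next, I would feed this oracle into the standard MWU covering/packing framework (e.g.\ Arora--Hazan--Kale~\cite{arora2012multiplicative}). Initialise $w^{(1)}(F)=1$ for all $F\in\mc F$. At iteration $t$, run the oracle on $w^{(t)}$ to obtain an independent set $I^{(t)}$ with $w^{(t)}(I^{(t)})\ge \frac{1}{s}w^{(t)}(\mc F)$, and update $w^{(t+1)}(F)=w^{(t)}(F)\cdot\exp\bigl(-\eta\cdot\mathbf{1}[F\in I^{(t)}]\bigr)$ for a small step size $\eta$. A standard MWU analysis shows that after $T=O(s\log|\mc F|)$ iterations, every $F\in\mc F$ lies in at least $T/(1+o(1))s$ of the independent sets $I^{(1)},\dots,I^{(T)}$. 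Hence the uniform distribution over the multiset $\{I^{(t)}\}_{t\in[T]}$ is a fractional colouring of $G_{\mc F}$ of size $(1+o(1))s=O_k(m^k)$, and sampling from it simply amounts to picking $t\in[T]$ uniformly at random and returning the stored $I^{(t)}$.

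The main point to verify is that the approximate oracle guaranteed by \cref{thm:modularstructure} interacts correctly with MWU to give a true fractional colouring of the claimed size (rather than, say, an $s^2$-sized one), and that the total running time is indeed $\mathrm{poly}(n^k)$. The former is the textbook guarantee of MWU with a $1/s$-approximate max-weight oracle; the latter holds because $T=O(s\log|\mc F|)=\mathrm{poly}(n^k)$ and each iteration invokes the oracle once at cost $\mathrm{poly}(n^k)$, with the final sampler storing only the $T$ independent sets. Plugging the resulting sampler into \cref{thm:fermionic_KGKB} with $m=4/\varepsilon^2$ then yields a total sample complexity of $O_k\bigl((1/\varepsilon)^{2(k+1)}\log(n)\bigr)$, establishing \cref{thm:fermionic_new}.
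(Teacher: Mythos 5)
Your proposal is correct and takes essentially the same approach as the paper: extract an efficient $1/s$-approximate weighted max-independent-set oracle from the constructive proof of \cref{thm:modularstructure}, then feed it into a multiplicative weight update scheme so that after polynomially many iterations every $F\in\mc F$ appears in an $\Omega(1/s)$ fraction of the produced independent sets, which yields the desired efficiently sampleable fractional colouring. The only cosmetic difference is the MWU variant (you decrease weights multiplicatively only on the chosen independent set, taking $T=O(s\log|\mc F|)$; the paper uses the ``loss'' update $w^{(t+1)}(F)=w^{(t)}(F)(1\pm\Theta(1/\chi))$ with $T=O(\chi^2\log|\mc F|)$), but both are standard and both give $T=\mathrm{poly}(n^k)$ iterations and a fractional colouring of size $O_k(m^k)$.
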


Details on how to obtain such an algorithm are given in \cref{section: appendix quantum}.
Note that combining \cref{lem:qs:fermionic} and \cref{thm:fermionic_KGKB}, we immediately get \cref{thm:fermionic_new}. Indeed, substituting $s=O_k(m^k)=O_k((4/\varepsilon^2)^k)$ and $T=\mathrm{poly}(n^k)$ in \cref{thm:fermionic_KGKB}, we get a triply efficient shadow tomography algorithm with classical running time $\mathrm{poly}(n^k,1/\varepsilon)$ which uses $O_k(\frac{1}{\varepsilon^{2k}}\cdot \frac{\log n}{\varepsilon^2})=O_k((1/\varepsilon)^{2k+2}\log n)$ total copies of the unknown quantum states.

\section{Concluding remarks and open questions} \label{sec:concluding}

In this paper we obtain an efficient delta-system lemma (\cref{lemma: weak furedi}) with a better quantitative dependency on both $k$ and $m$ than F\"uredi's delta-system lemma (\cref{lemma: furedi}).
However, our result only guarantees the sunflowers in the original family $\mc{F}$ rather than in $\mc{F}'$.
As mentioned in \cref{sec: intro furedi}, the stronger conclusion of the sunflowers being contained in $\FF'$ is not necessary in several well-known applications, but can still be important for some problems like the Tur\'an number of expansions of trees \cite{MV16}. Therefore,
it would be interesting to show that every $\mc{F}\subseteq \binom{[n]}{k}$ contains a subfamily $\mc{F}'\subseteq \mc{F}$ with $|\mc{F}'| = \Omega_k(m^{-k}|\mc{F}|)$ that satisfies \ref{item:Furedimain}, i.e., for every distinct $F_1,F_2 \in \mc{F}'$, their intersection $F_1\cap F_2$ is the kernel of a sunflower in $\mc{F}'$ with $m$ petals.
Note that when $m \ge k+1$, this implies \ref{item:Furediintersection-closed}, i.e., that $I(F,\mc{F}')$ is intersection-closed.

In this paper we primarily focused on the dependency on $m$, and in many of our results, the dependency on the uniformity $k$ is $2^{O(k^2)}$.
This comes from our main technique -- colour certificates -- used to prove the lower bounds on $r_\textnormal{sf}(k,L,m)$.
Even in \cref{theorem: modular clique}, we still have a dependency of $k^{O(k)}$ when the prime $p$ is fixed. It would be nice to improve the dependencies on $k$ in all the results in this paper. We note that dependencies on the uniformity are often very interesting, for example, in the sunflower conjecture of Erdős and Rado -- as mentioned before, the recent breakthrough of Alweiss, Lovett, Zhang and the last author~\cite{ALWZ21} on that problem has found many applications. Perhaps some of their techniques can also be used in our setting.

In \cref{theorem: modular clique}, we essentially determined $r(k,L,m) = m^{-k/p+\Theta(1)}$ when $L \subseteq \mb{Z}/p\mb{Z}$ has size $p-1$.
In \cref{theorem: modular clique upper bound}, we gave an upper bound $r(k,L,m)=O_k(m^{-(p-1)k/p+p^2})$ when $L\subseteq \mb{Z}/p\mb{Z}$ has size $1$; and $r(k,L,m) = O_k\left(m^{-\frac{(p-1)k}{p|L|} + p^2}\right)$ in general. It would be very interesting to determine the exact exponents in these results.

\vspace{0.3cm}
\noindent
{\bf Acknowledgements:}
We would like to thank Robbie King, David Gosset, Robin Kothari, and Ryan Babbush for clarifying questions about \cite{king2025triply}. Additionally, we are grateful to Péter Frankl, Zoltán F\"uredi, Dhruv Mubayi, and Jacques Verstra\"ete for their insights about the delta-system method, and Ehud Freidgut and Peter Keevash for answering questions about the junta method. 
We also thank Yuval Wigderson for the stimulating discussions. Part of the work was done while BS and KW were visiting Bernoulli Center for Fundamental Studies at EPFL in Aug 2024.

\bibliographystyle{abbrv} 
\bibliography{ref}

@article{erdos1935combinatorial,
  title={A combinatorial problem in geometry},
  author={Erd{\H o}s, Paul and Szekeres, George},
journal={Compos. Math.},
  fjournal={Compositio mathematica},
  volume={2},
  pages={463--470},
  year={1935}
}

@article{rao2023sunflowers,
  title={Sunflowers: from soil to oil},
  author={Rao, Anup},
journal={Bull. Amer. Math. Soc.},
  fjournal={Bulletin of the American Mathematical Society},
  volume={60},
  number={1},
  pages={29--38},
  year={2023}
}

@misc{babai2022linear,
  title={LINEAR ALGEBRA METHODS IN COMBINATORICS},
  author={Babai, L{\'a}szl{\'o} and Frankl, P{\'e}ter},
  year={2022}
}

@article{king2025triply,
  title = {Triply Efficient Shadow Tomography},
  author = {King, Robbie and Gosset, David and Kothari, Robin and Babbush, Ryan},
  journal = {PRX Quantum},
  volume = {6},
  issue = {1},
  pages = {010336},
  numpages = {25},
  year = {2025},
  publisher = {American Physical Society},
  doi = {10.1103/PRXQuantum.6.010336},
  url = {https://link.aps.org/doi/10.1103/PRXQuantum.6.010336}
}

@article{KN24,
  title={Linear dependencies, polynomial factors in the {D}uke--{E}rd{\H o}s forbidden sunflower problem},
  author={Kupavskii, Andrey and Noskov, Fedor},
  journal={arXiv preprint arXiv:2410.06156},
  year={2024}
}

@article {BBS23,
    AUTHOR = {Brada{\v c}, Domagoj and Buci{\'c}, Matija and Sudakov, Benny},
     TITLE = {Tur{\'a}n numbers of sunflowers},
   JOURNAL = {Proc. Amer. Math. Soc.},
  FJOURNAL = {Proceedings of the American Mathematical Society},
    VOLUME = {151},
      YEAR = {2023},
    NUMBER = {3},
     PAGES = {961--975},
      ISSN = {0002-9939,1088-6826},
   MRCLASS = {05C65 (05C51 05D05 05D40 90C27)},
  MRNUMBER = {4531631},
MRREVIEWER = {Jiapeng\ Zhang},
       DOI = {10.1090/proc/16160},
       URL = {https://doi.org/10.1090/proc/16160},
}

@article {FF87,
    AUTHOR = {Frankl, P. and F{\"u}redi, Z.},
     TITLE = {Exact solution of some {T}ur{\'a}n-type problems},
   JOURNAL = {J. Combin. Theory Ser. A},
  FJOURNAL = {Journal of Combinatorial Theory. Series A},
    VOLUME = {45},
      YEAR = {1987},
    NUMBER = {2},
     PAGES = {226--262},
      ISSN = {0097-3165,1096-0899},
   MRCLASS = {05A05 (05C35 05C65)},
  MRNUMBER = {894820},
MRREVIEWER = {G.\ F.\ Clements},
       DOI = {10.1016/0097-3165(87)90016-1},
       URL = {https://doi.org/10.1016/0097-3165(87)90016-1},
}

@article {KL21,
    AUTHOR = {Keller, N. and Lifshitz, N.},
     TITLE = {The junta method for hypergraphs and the {E}rd{\H
              o}s-{C}hv{\'a}tal simplex conjecture},
   JOURNAL = {Adv. Math.},
  FJOURNAL = {Advances in Mathematics},
    VOLUME = {392},
      YEAR = {2021},
     PAGES = {Paper No. 107991},
      ISSN = {0001-8708,1090-2082},
   MRCLASS = {05C35 (05C65 05D05)},
  MRNUMBER = {4312859},
MRREVIEWER = {Jan\ Hladk\'y},
       DOI = {10.1016/j.aim.2021.107991},
       URL = {https://doi.org/10.1016/j.aim.2021.107991},
}

@article {EKR61,
    AUTHOR = {Erd{\H{o}}s, P. and Ko, Chao and Rado, R.},
     TITLE = {Intersection theorems for systems of finite sets},
   JOURNAL = {Quart. J. Math. Oxford Ser. (2)},
  FJOURNAL = {The Quarterly Journal of Mathematics. Oxford. Second Series},
    VOLUME = {12},
      YEAR = {1961},
     PAGES = {313--320},
      ISSN = {0033-5606,1464-3847},
   MRCLASS = {04.60},
  MRNUMBER = {140419},
MRREVIEWER = {S.\ Ginsburg},
       DOI = {10.1093/qmath/12.1.313},
       URL = {https://doi.org/10.1093/qmath/12.1.313},
}

@article {RW75,
    AUTHOR = {Ray-Chaudhuri, D. K. and Wilson, R. M.},
     TITLE = {On {$t$}-designs},
   JOURNAL = {Osaka Math. J.},
  FJOURNAL = {Osaka Mathematical Journal},
    VOLUME = {12},
      YEAR = {1975},
    NUMBER = {3},
     PAGES = {737--744},
      ISSN = {0388-0699},
   MRCLASS = {05B30},
  MRNUMBER = {392624},
MRREVIEWER = {N.\ Ito},
       URL = {http://projecteuclid.org/euclid.ojm/1200758175},
}

@article {DEF78,
    AUTHOR = {Deza, M. and Erd{\H o}s, P. and Frankl, P.},
     TITLE = {Intersection properties of systems of finite sets},
   JOURNAL = {Proc. London Math. Soc. (3)},
  FJOURNAL = {Proceedings of the London Mathematical Society. Third Series},
    VOLUME = {36},
      YEAR = {1978},
    NUMBER = {2},
     PAGES = {369--384},
      ISSN = {0024-6115,1460-244X},
   MRCLASS = {05A20},
  MRNUMBER = {476536},
MRREVIEWER = {E.\ C.\ Milner},
       DOI = {10.1112/plms/s3-36.2.369},
       URL = {https://doi.org/10.1112/plms/s3-36.2.369},
}

@article {FT16,
    AUTHOR = {Frankl, P. and Tokushige, N.},
     TITLE = {Invitation to intersection problems for finite sets},
   JOURNAL = {J. Combin. Theory Ser. A},
  FJOURNAL = {Journal of Combinatorial Theory. Series A},
    VOLUME = {144},
      YEAR = {2016},
     PAGES = {157--211},
      ISSN = {0097-3165,1096-0899},
   MRCLASS = {05D05 (90C22)},
  MRNUMBER = {3534067},
MRREVIEWER = {Thomas\ Kalinowski},
       DOI = {10.1016/j.jcta.2016.06.017},
       URL = {https://doi.org/10.1016/j.jcta.2016.06.017},
}

@incollection {CFS15,
    AUTHOR = {Conlon, D. and Fox, J. and Sudakov, B.},
     TITLE = {Recent developments in graph {R}amsey theory},
 BOOKTITLE = {Surveys in Combinatorics 2015},
    SERIES = {London Math. Soc. Lecture Note Ser.},
    VOLUME = {424},
     PAGES = {49--118},
 PUBLISHER = {Cambridge Univ. Press, Cambridge},
      YEAR = {2015},
      ISBN = {978-1-107-46250-2},
   MRCLASS = {05-02 (05C55)},
  MRNUMBER = {3497267},
}

@article {FF85,
    AUTHOR = {Frankl, P. and F{\"{u}}redi, Z.},
     TITLE = {Forbidding just one intersection},
   JOURNAL = {J. Combin. Theory Ser. A},
  FJOURNAL = {Journal of Combinatorial Theory. Series A},
    VOLUME = {39},
      YEAR = {1985},
    NUMBER = {2},
     PAGES = {160--176},
      ISSN = {0097-3165},
   MRCLASS = {05A05 (05B40)},
  MRNUMBER = {793269},
MRREVIEWER = {J. E. Graver},
       DOI = {10.1016/0097-3165(85)90035-4},
       URL = {https://doi.org/10.1016/0097-3165(85)90035-4},
}

@article{F78,
    title={On intersecting families of finite sets},
    author={Frankl, P.},
    journal={J. Combin. Theory Ser. A},
    FJOURNAL = {Journal of Combinatorial Theory. Series A},
    volume={24},
    number={2},
    pages={146--161},
    year={1978},
    publisher={Elsevier}
}

@article {Deza74,
    AUTHOR = {Deza, M.},
     TITLE = {Solution d'un probl{\`e}me de {E}rd{\H{o}}s--{L}ov{\'{a}}sz},
   JOURNAL = {J. Combin. Theory Ser. B},
  FJOURNAL = {Journal of Combinatorial Theory. Series B},
    VOLUME = {16},
      YEAR = {1974},
     PAGES = {166--167},
      ISSN = {0095-8956},
   MRCLASS = {05A05},
  MRNUMBER = {337635},
MRREVIEWER = {Vaclav Chv\'{a}tal},
       DOI = {10.1016/0095-8956(74)90059-8},
       URL = {https://doi.org/10.1016/0095-8956(74)90059-8},
}

@inproceedings {DE77,
    AUTHOR = {Duke, R. A. and Erd{\H{o}}s, P.},
     TITLE = {Systems of finite sets having a common intersection},
 BOOKTITLE = {Proc. 8th S-E Conf. Combinatorics, Graph Theory and Computing},
    SERIES = {Congress. Numer.},
volume={19},
     PAGES = {247--252},
 PUBLISHER = {Utilitas Math., Winnipeg},
      YEAR = {1977},
   MRCLASS = {05A05},
  MRNUMBER = {485408},
MRREVIEWER = {H. Hanani},
}

@article {EKL24,
    AUTHOR = {Ellis, D. and Keller, N. and Lifshitz, N.},
     TITLE = {Stability for the complete intersection theorem, and the
              forbidden intersection problem of {E}rd{\H{o}}s and {S}{\'{o}}s},
   JOURNAL = {J. Eur. Math. Soc. (JEMS)},
  FJOURNAL = {Journal of the European Mathematical Society (JEMS)},
    VOLUME = {26},
      YEAR = {2024},
    NUMBER = {5},
     PAGES = {1611--1654},
      ISSN = {1435-9855},
   MRCLASS = {05D05},
  MRNUMBER = {4735818},
MRREVIEWER = {Norihide Tokushige},
       DOI = {10.4171/jems/1441},
       URL = {https://doi.org/10.4171/jems/1441},
}

@article {F83,
    AUTHOR = {F{\"{u}}redi, Z.},
     TITLE = {On finite set-systems whose every intersection is a kernel of
              a star},
   JOURNAL = {Discrete Math.},
  FJOURNAL = {Discrete Mathematics},
    VOLUME = {47},
      YEAR = {1983},
    NUMBER = {1},
     PAGES = {129--132},
      ISSN = {0012-365X},
   MRCLASS = {05A05 (05C35)},
  MRNUMBER = {720617},
MRREVIEWER = {D. J. Kleitman},
       DOI = {10.1016/0012-365X(83)90081-X},
       URL = {https://doi.org/10.1016/0012-365X(83)90081-X},
}

@incollection {MV16,
    AUTHOR = {Mubayi, D. and Verstra{\"{e}}te, J.},
     TITLE = {A survey of {T}ur{\'{a}}n problems for expansions},
 BOOKTITLE = {Recent trends in combinatorics},
    SERIES = {IMA Vol. Math. Appl.},
    VOLUME = {159},
     PAGES = {117--143},
 PUBLISHER = {Springer},
address={Cham},
      YEAR = {2016},
   MRCLASS = {05-02 (05C35 05C65 05D05 05D40)},
  MRNUMBER = {3526406},
       DOI = {10.1007/978-3-319-24298-9\_5},
       URL = {https://doi.org/10.1007/978-3-319-24298-9_5},
}

@article{FW81,
  title={Intersection theorems with geometric consequences},
  author={Frankl, Peter and Wilson, Richard M.},
  journal={Combinatorica},
  volume={1},
  pages={357--368},
  year={1981},
  publisher={Springer}
}

@inproceedings{E75,
  title={Problems and results in graph theory and combinatorial analysis},
  author={Erd{\H{o}}s, Paul},
  booktitle={Proc. 5th British Combinatorial Conf. 1975},
  pages={169--192},
series={Congress. Numer.},
publisher={Utilitas Math., Winnipeg},
volume={15},
  year={1976}
}

@article {KZ24,
    AUTHOR = {Kupavskii, A. and Zakharov, D.},
     TITLE = {Spread approximations for forbidden intersections problems},
   JOURNAL = {Adv. Math.},
  FJOURNAL = {Advances in Mathematics},
    VOLUME = {445},
      YEAR = {2024},
     PAGES = {Paper No. 109653},
      ISSN = {0001-8708,1090-2082},
   MRCLASS = {05D05 (05C35 05C65 05D40)},
  MRNUMBER = {4732823},
MRREVIEWER = {J\'ozsef\ Balogh},
       DOI = {10.1016/j.aim.2024.109653},
       URL = {https://doi.org/10.1016/j.aim.2024.109653},
}

@article {ER60,
    AUTHOR = {Erd{\H{o}}s, P. and Rado, R.},
     TITLE = {Intersection theorems for systems of sets},
   JOURNAL = {J. London Math. Soc.},
  FJOURNAL = {The Journal of the London Mathematical Society},
    VOLUME = {35},
      YEAR = {1960},
     PAGES = {85--90},
      ISSN = {0024-6107,1469-7750},
   MRCLASS = {04.00},
  MRNUMBER = {111692},
MRREVIEWER = {Djuro\ Kurepa},
       DOI = {10.1112/jlms/s1-35.1.85},
       URL = {https://doi.org/10.1112/jlms/s1-35.1.85},
}

@article {ALWZ21,
    AUTHOR = {Alweiss, Ryan and Lovett, Shachar and Wu, Kewen and Zhang,
              Jiapeng},
     TITLE = {Improved bounds for the sunflower lemma},
   JOURNAL = {Ann. of Math. (2)},
  FJOURNAL = {Annals of Mathematics. Second Series},
    VOLUME = {194},
      YEAR = {2021},
    NUMBER = {3},
     PAGES = {795--815},
      ISSN = {0003-486X,1939-8980},
   MRCLASS = {05D05 (05D10 05D40)},
  MRNUMBER = {4334977},
MRREVIEWER = {Bhargav\ Narayanan},
       DOI = {10.4007/annals.2021.194.3.5},
       URL = {https://doi.org/10.4007/annals.2021.194.3.5},
}

@article {FT16eventown,
    AUTHOR = {Frankl, Peter and Tokushige, Norihide},
     TITLE = {Uniform eventown problems},
   JOURNAL = {European J. Combin.},
  FJOURNAL = {European Journal of Combinatorics},
    VOLUME = {51},
      YEAR = {2016},
     PAGES = {280--286},
      ISSN = {0195-6698,1095-9971},
   MRCLASS = {05D05},
  MRNUMBER = {3398856},
MRREVIEWER = {Wanjun\ Hu},
       DOI = {10.1016/j.ejc.2015.06.001},
       URL = {https://doi.org/10.1016/j.ejc.2015.06.001},
}

@incollection {Furedi91,
    AUTHOR = {F{\"u}redi, Zolt{\'a}n},
     TITLE = {Tur{\'a}n type problems},
 BOOKTITLE = {Surveys in combinatorics, 1991 ({G}uildford, 1991)},
    SERIES = {London Math. Soc. Lecture Note Ser.},
    VOLUME = {166},
     PAGES = {253--300},
 PUBLISHER = {Cambridge Univ. Press, Cambridge},
      YEAR = {1991},
      ISBN = {0-521-40766-4},
   MRCLASS = {05C35 (05C65 05C80 05D05)},
  MRNUMBER = {1161467},
MRREVIEWER = {W.\ G.\ Brown},
}

@article {PP24KahnKalai,
    AUTHOR = {Park, Jinyoung and Pham, Huy Tuan},
     TITLE = {A proof of the {K}ahn-{K}alai conjecture},
   JOURNAL = {J. Amer. Math. Soc.},
  FJOURNAL = {Journal of the American Mathematical Society},
    VOLUME = {37},
      YEAR = {2024},
    NUMBER = {1},
     PAGES = {235--243},
      ISSN = {0894-0347,1088-6834},
       DOI = {10.1090/jams/1028},
       URL = {https://doi.org/10.1090/jams/1028},
}

@article {fractionalKahnKalai,
    AUTHOR = {Frankston, Keith and Kahn, Jeff and Narayanan, Bhargav and
              Park, Jinyoung},
     TITLE = {Thresholds versus fractional expectation-thresholds},
   JOURNAL = {Ann. of Math. (2)},
  FJOURNAL = {Annals of Mathematics. Second Series},
    VOLUME = {194},
      YEAR = {2021},
    NUMBER = {2},
     PAGES = {475--495},
      ISSN = {0003-486X,1939-8980},
       DOI = {10.4007/annals.2021.194.2.2},
       URL = {https://doi.org/10.4007/annals.2021.194.2.2},
}

@article {FO83,
    AUTHOR = {Frankl, P. and Odlyzko, A. M.},
     TITLE = {On subsets with cardinalities of intersections divisible by a
              fixed integer},
   JOURNAL = {European J. Combin.},
  FJOURNAL = {European Journal of Combinatorics},
    VOLUME = {4},
      YEAR = {1983},
    NUMBER = {3},
     PAGES = {215--220},
      ISSN = {0195-6698,1095-9971},
   MRCLASS = {05A05},
  MRNUMBER = {725068},
MRREVIEWER = {J.\ E.\ Graver},
       DOI = {10.1016/S0195-6698(83)80014-6},
       URL = {https://doi.org/10.1016/S0195-6698(83)80014-6},
}

@article {Tuza87,
    AUTHOR = {Tuza, Zsolt},
     TITLE = {Inequalities for two-set systems with prescribed
              intersections},
   JOURNAL = {Graphs Combin.},
  FJOURNAL = {Graphs and Combinatorics},
    VOLUME = {3},
      YEAR = {1987},
    NUMBER = {1},
     PAGES = {75--80},
      ISSN = {0911-0119,1435-5914},
   MRCLASS = {05A05 (05A20)},
  MRNUMBER = {932115},
MRREVIEWER = {G.\ F.\ Clements},
       DOI = {10.1007/BF01788531},
       URL = {https://doi.org/10.1007/BF01788531},
}

@article{nagele2019submodular,
  title={Submodular minimization under congruency constraints},
  author={N{\"a}gele, Martin and Sudakov, Benny and Zenklusen, Rico},
  journal={Combinatorica},
  volume={39},
  number={6},
  pages={1351--1386},
  year={2019},
  publisher={Springer}
}

@article{arora2012multiplicative,
  title={The multiplicative weights update method: a meta-algorithm and applications},
  author={Arora, Sanjeev and Hazan, Elad and Kale, Satyen},
  journal={Theory Comput.},
fjournal={Theory of Computing},
  volume={8},
  number={1},
  pages={121--164},
  year={2012},
  publisher={Theory of Computing Exchange}
}

@inproceedings{aaronson2018shadow,
  title={Shadow tomography of quantum states},
  author={Aaronson, Scott},
  booktitle={Proc. 50th Annual ACM SIGACT Symposium on Theory of Computing},
  pages={325--338},
  year={2018}
}

@inproceedings{buadescu2021improved,
  title={Improved quantum data analysis},
  author={B{\u{a}}descu, Costin and O'Donnell, Ryan},
  booktitle={Proc. 53rd Annual ACM SIGACT Symposium on Theory of Computing},
  pages={1398--1411},
  year={2021}
}

@article{GST22,
    AUTHOR = {Gishboliner, Lior and Sudakov, Benny and Tomon, Istv{\'a}n},
     TITLE = {Small doubling, atomic structure and {$\ell$}-divisible set
              families},
   JOURNAL = {Discrete Anal.},
  FJOURNAL = {Discrete Analysis},
      YEAR = {Paper No. 11, 2022},
pages={},
      ISSN = {2397-3129},
   MRCLASS = {05D05},
  MRNUMBER = {4492183},
MRREVIEWER = {Wanjun\ Hu},
}

@article{jiang2024number,
  title={On the number of $\mathcal{H}$-free hypergraphs},
  author={Jiang, Tao and Longbrake, Sean},
  journal={Forum of Mathematics, Sigma},
  volume={14},
  pages={e20},
  year={2026},
  organization={Cambridge University Press}
}

\appendix

\section{Proof of Theorem~\ref{theorem: forbidding sunflowers with one but zero}}
\label{sec: appendix}

In this section, we prove \cref{theorem: forbidding sunflowers with one but zero} by extending the construction we used in the case $L=\{1\}$.
For notational convenience, we will show  
 $   r_{\textnormal{sf}}(k,L,m + 1) \le m^{-(k-1)}$
for all $L \subseteq [k-1]$ with $1 \in L$.

Similarly to the proof for $L=\{1\}$, we will define our family $\mc{F}$ based on a graph $G$ whose vertex set is partitioned into $k$ parts.
But for general $L$, the graph between the $k$ parts will look like a tree rather than a chain (as in \cref{lemma: digraph}), see \cref{fig: tree structure} for an example. 
\begin{figure}[ht]
    \centering
    \begin{tikzpicture}[scale = 1.2]
        \pgfmathsetmacro{\dx}{2.5};
        \pgfmathsetmacro{\dy}{1.5};
        \pgfmathsetmacro{\r}{0.5};

        \coordinate (1) at (0, 0);
        \coordinate (2) at (\dx, 0);
        \coordinate (21) at (\dx, \dy);
        \coordinate (3) at (\dx * 2, 0);
        \coordinate (4) at (\dx * 3, 0);
        \coordinate (41) at (\dx * 3, \dy);
        \coordinate (42) at (\dx * 3, \dy * 2);
        \coordinate (5) at (\dx * 4, 0);

        \draw (1) circle (\r);
        \draw (2) circle (\r);
        \draw (21) circle (\r);
        \draw (3) circle (\r);
        \draw (4) circle (\r);
        \draw (41) circle (\r);
        \draw (42) circle (\r);
        \draw (5) circle (\r);

        \draw (1) node {$V_{(1,1)}$};
        \draw (2) node {$V_{(2,1)}$};
        \draw (21) node {$V_{(2,2)}$};
        \draw (3) node {$V_{(3,1)}$};
        \draw (4) node {$V_{(4,1)}$};
        \draw (41) node {$V_{(4,2)}$};
        \draw (42) node {$V_{(4,3)}$};
        \draw (5) node {$V_{(5,1)}$};

        \draw ($(1) + ({0}, {-1.5*\r})$)  node {$h_1=1$};        
        \draw ($(2) + ({0}, {-1.5*\r})$)  node {$h_2=2$};        
        \draw ($(3) + ({0}, {-1.5*\r})$)  node {$h_3=1$};        
        \draw ($(4) + ({0}, {-1.5*\r})$)  node {$h_4=3$};        
        \draw ($(5) + ({0}, {-1.5*\r})$)  node {$h_5=1$};

        \draw[very thick] ($(1) + ({\r}, {0})$) -- ($(2) + ({-\r}, {0})$);
        \draw[very thick] ($(2) + ({\r}, {0})$) -- ($(3) + ({-\r}, {0})$);
        \draw[very thick] ($(3) + ({\r}, {0})$) -- ($(4) + ({-\r}, {0})$);
        \draw[very thick] ($(4) + ({\r}, {0})$) -- ($(5) + ({-\r}, {0})$);
        
        \draw[very thick] ($(2) + ({0}, {\r})$) -- ($(21) + ({0}, {-\r})$);
        \draw[very thick] ($(4) + ({0}, {\r})$) -- ($(41) + ({0}, {-\r})$);
        \draw[very thick] ($(41) + ({0}, {\r})$) -- ($(42) + ({0}, {-\r})$);
    \end{tikzpicture}
    \caption{An illustration of $G$ when $k=8$ and $L=\{1,2,3,6\}$ (hence $t=5$). The eight parts of $G$ are depicted by circles and there are edges between two parts if and only if there is a solid line between the two circles.}
    \label{fig: tree structure} 
\end{figure}
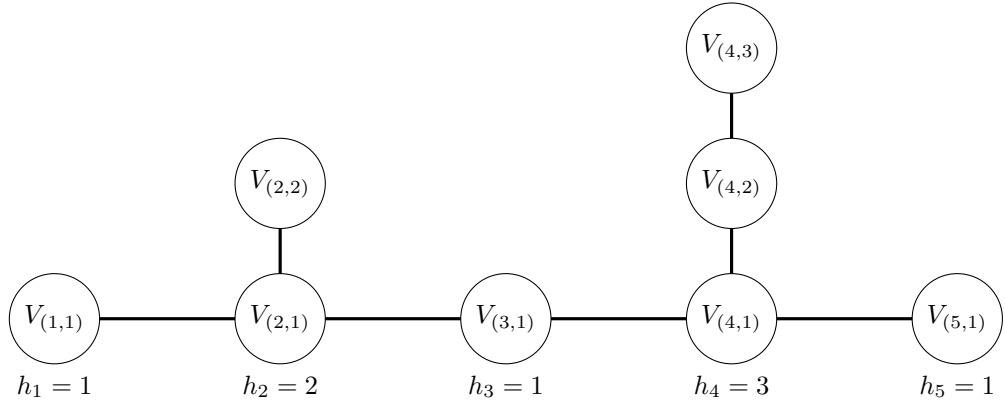

To define $G$, it will be convenient to consider a decomposition of $L$, given by the lemma below.
In this decomposition of $L$, one should think of $t=|[0,k]\setminus L|$ as the number of parts $V_{(1,1)}, \dots, V_{(t,1)}$ forming a horizontal chain as in~\cref{fig: tree structure}, and $h_i$ as the number of parts $V_{(i,1)},\dots,V_{(i,h_i)}$ forming a vertical chain at the $i$th layer. Moreover, these parameters are always chosen in such a way that for each $\ell \in [k-1]$, we have $\ell\not \in L$ if and only if $\ell=\sum_{i=i_0}^t h_i$ for some $i_0 \in [t-1]$.

\begin{lemma}\label{lemma:decomposeL}
    Let $k \ge 2$ and $L\subseteq [k-1]$ with $1\in L$. 
    Then there exist positive integers $t\geq 2$ and $h_i$ for each $i\in[t]$ such that $h_t=1$, $\sum_{i=1}^t h_i=k$, and
    \begin{align*}
        [k]\setminus L&=\Big\{\ell: \ell=\sum_{i=i_0}^t h_i\textnormal{ for some }i_0 \in [t-1]\Big\},\\
        L&=\{1\}\cup\bigcup_{i_0=1}^{t-1}  \Big\{\ell:\sum_{i=i_0+1}^t h_i< \ell<\sum_{i=i_0}^t h_i\Big\}.
    \end{align*}
\end{lemma}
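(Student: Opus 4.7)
The plan is to recognise that this lemma is essentially a notational reformulation describing how the complement of $L$ partitions $[k]$ into ``gap values'' and intervening runs that belong to $L$. The quantities $\sum_{i=i_0}^{t}h_i$ for $i_0=1,\dots,t-1$ are simply the elements of $[k]\setminus L$ listed in decreasing order, and the $h_i$'s are the successive gaps between these elements (with a final ``$+1$'' absorbed into $h_t=1$).

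First, I would set $S:=[k]\setminus L$. Since $L\subseteq [k-1]$, we have $k\in S$, so $S\ne\emptyset$; and since $1\in L$, we have $1\notin S$. List the elements of $S$ in decreasing order as $c_1>c_2>\cdots>c_{t-1}$, where $c_1=k$ and $t-1=|S|\ge 1$, so $t\ge 2$. Define
\[
h_i=c_i-c_{i+1} \quad\text{for } i\in[1,t-2], \qquad h_{t-1}=c_{t-1}-1, \qquad h_t=1.
\]
Each $h_i$ is a positive integer: for $i\le t-2$ this is by strict monotonicity, for $i=t-1$ it is because $c_{t-1}\ge 2$ (as $1\notin S$), and $h_t=1$ by construction. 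Telescoping gives $\sum_{i=1}^t h_i=(c_1-c_2)+\cdots+(c_{t-2}-c_{t-1})+(c_{t-1}-1)+1=c_1=k$, and $h_t=1$ holds by definition.

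Next, I would verify the two displayed equalities. For every $i_0\in[t-1]$, telescoping again yields $\sum_{i=i_0}^t h_i=c_{i_0}$, so the set $\{\sum_{i=i_0}^t h_i:i_0\in[t-1]\}$ is exactly $\{c_1,\dots,c_{t-1}\}=S=[k]\setminus L$, establishing the first identity. For the second identity, the open intervals $(c_{i_0+1},c_{i_0})$ for $i_0=1,\dots,t-2$ together with $(1,c_{t-1})$ partition $[2,k-1]\setminus\{c_2,\dots,c_{t-1}\}$, and adjoining $\{1\}$ gives $[1,k-1]\setminus\{c_2,\dots,c_{t-1}\}=[k-1]\setminus S=L$, as required. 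There is no real obstacle here; the only subtlety is handling the endpoints $c_1=k$ and $c_{t-1}$ correctly, which is taken care of by the special role of $h_t=1$.
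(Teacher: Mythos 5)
Your proof is correct and takes essentially the same route as the paper: both list the complement of $L$ in decreasing order and define the $h_i$ as consecutive differences, with the final $h_{t-1}$ and $h_t=1$ handling the endpoint. The paper's only cosmetic difference is that it lists the elements of $[0,k]\setminus L$ (so $0$ appears as $a_t$), which makes the telescoping a hair more uniform, but the resulting $h_i$'s coincide exactly with yours.
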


\begin{proof}
    Let $t:=k+1-|L|$ and write $[0,k]\setminus L=\{a_1,\dots,a_{t}\}$ with $k=a_1>a_2>\dots>a_{t}=0$. Then let $h_t:=1$, $h_{t-1}:=a_{t-1}-1$, and $h_i:=a_{i}-a_{i+1}$ for all $i\in[t-2]$. It is easy to see that these satisfy the required conditions.
\end{proof}

Let $t$ and $h_1,\dots,h_t$ be as in \cref{lemma:decomposeL}. 
Let $P:=\{(a,b):a\in[t],b\in[h_a]\}$.
As we shall see later, the $k$ parts of $G$ will correspond to the $k$ points of $P$ and we will index them by $V_{(a,b)}$, where $(a,b) \in P$.
The $k$ vertex sets of $G$ basically form a tree $\mc{T}$ with vertex set $P$ where two vertices $(a,b)$ and $(a',b')$ are connected if $a=a',b=b'\pm 1$ or $a=a'\pm 1,b=b'$.
In addition, every member of $\mc{F}$ will be a tree in $G$ that contains precisely one vertex from each $V_{(a,b)}$.

To illustrate the construction, let us first re-consider the case $L=\{1\}$, whose proof is in \cref{sec: forbidding sunflowers}.
In this case, $t=k$ and $h_1=\dots=h_k=1$.
Hence $P=\{(1,1),(2,1),\dots,(k,1)\}$ and $\mc{T}$ is a chain from $(1,1)$ to $(k,1)$.
Let us recall a sketch proof with $V_1,\dots, V_k$ replaced by $V_{(1,1)},\dots,V_{(k,1)}$.
Recall that $\mc{F}$ contains the $k$-tuples of vertices of all possible chains (in $G$) from $V_{(1,1)}$ to $V_{(k,1)}$ and we make use of the following properties.
\begin{enumerate}[(1)]
    \item For every two chains in $\mc{F}$, the parts of $G$ that the chains intersect on form an interval, i.e., are of form $V_{(i,1)},V_{(i+1,1)},\dots,V_{(j,1)}$ for some $1 \le i < j \le k$. \label{item: recall interval}
    \item Every vertex in $V_{(a,1)}$ has $m$ neighbours in $V_{(a+1,1)}$.
    \label{item: recall c to the right}
    \item Every vertex in $V_{(a,1)}$ has $n$ neighbours in $V_{(a-1,1)}$ for all $a \in [k-1]$ while every vertex in $V_{(k,1)}$ has $m$ neighbours in $V_{(k-1,1)}$. \label{item: recall n to the left}
\end{enumerate}
These facts imply that the
kernel of any sunflower with $m+1$ petals corresponds to parts $V_{(a,1)},\dots,V_{(k,1)}$, for some $a \in [k-1]$ (an interval on the right of length at least $2$).
In particular, there is no sunflower with $m+1$ petals whose kernel has size 1.
In addition, suppose $\mc{F}'\subseteq \mc{F}$ avoids intersections of size 1.
We classified the edges of $G$ into heavy edges and light edges, based on how many sets (chains) in $\mc{F}'$ contain the pair of vertices of this edge.
On the one hand, there are ``few'' sets containing at least one light edge.
On the other hand, every vertex $u \in V_{(a,1)}$ has at most one heavy edge $uv$ to $V_{(a+1,1)}$.
Indeed, otherwise, in order to avoid intersections of size 1, we have to fix one of the $n$ neighbours of $u$ in $V_{(a-1,1)}$, but then, there are not enough sets to make $uv$ or $uv'$ heavy.
Hence, there are at most $|V_{(1,1)}|$ sets which contain only heavy edges.
In total, we see that $\mc{F}'$ is much smaller than $\mc{F}$.

Now, we sketch how to generalize the above construction.
For simplicity, for the purposes of this informal discussion, let us assume that $h_1,\dots,h_t \in \{1,2\}$.
Recall that we want to take $\mc{F}$ to be trees in $G$ that contain precisely one vertex from each $V_{(a,b)}$.
An easy way to think about the sets in $\mc{F}$ is by fixing any vertex in any set $V_{(a,b)}$ and build our set in $\FF$ by using rules like \ref{item: recall c to the right} and \ref{item: recall n to the left} to gradually determine vertices in other parts of $G$.
We will keep \ref{item: recall c to the right} and \ref{item: recall n to the left}, and as a substitute for \ref{item: recall interval}, we want that for any two sets in $\mc{F}$, the parts they intersect on is connected in $\mc{T}$ (hence give a subtree of $\mc{T}$).
Moreover, we need the following rule relating $V_{(a,1)}$ and $V_{(a,2)}$.
\begin{enumerate}[(1),resume]
    \item When building an element of $\FF$, given a vertex $u \in V_{(a,1)}$ and $v \in V_{(a+1,1)}$, there are $m$ options to choose the vertex in $V_{(a,2)}$ (whenever $h_a = 2$); furthermore, the $m$ options for $u,v$ are disjoint from those for $u,v'$ for any $v' \not = v$. \label{item: new rule up}
\end{enumerate}
The reason to have the above less natural rule is that we want to make sure that every vertex in $V_{(a,1)}$ is incident to at most one heavy edge to $V_{(a+1,1)}$ (if $\mc{F}'\subseteq \mc{F}$ avoids intersections of size in $L$).
Indeed, if two sets $F,F'$ in $\mc{F}'$ meet on $V_{(a,1)}$ but not on $V_{(a+1,1)}$, \ref{item: new rule up} guarantees they also do not meet on $V_{(a,2)}$.
In order to satisfy $|F\cap F'| \neq 1$, they must meet on $V_{(a-1,1)}$.
But then, the same argument as in the case $L=\{1\}$ will show that every vertex in $V_{(a,1)}$ is incident to at most one heavy edge to $V_{(a+1,1)}$ for all $a \in [t-1]$.
With some extra work, we can also show that every vertex in $V_{(a,1)}$ is incident to at most one heavy edge to $V_{(a,2)}$ with some suitable notation of heavy edges.
Hence, the number of sets containing only heavy edges is at most $V_{(1,1)}$, like in the case $L=\{1\}$.
Moreover, for sunflowers with $m+1$ petals, we know the parts corresponding to the kernel must form a subtree in $\mc{T}$.
Using \ref{item: recall c to the right} and \ref{item: new rule up} (and an additional property we guarantee, namely that every vertex in $V_{(a,2)}$ has $m$ neighbours in $V_{(a,1)}$ in $G$), we eventually see that the parts corresponding to the kernel must be $\{V_{(a,b)}: a \ge a_0\}$ for some $a_0 \in [t]$.
So either its size is $1$ (i.e., $a_0=t$), which is not possible due to \ref{item: recall n to the left}, or its size is not in $L$.
This demonstrates that any sunflower with $m+1$ petals has kernel size not in $L$, as desired.

To formally prove \cref{theorem: forbidding sunflowers with one but zero}, we need a few more rules than above, to deal with the cases when $h_a \ge 3$ (although these rules are similar in spirit).
See \cref{fig: G} for the properties of $G$ and $\mc{F}$ in the same example as in \cref{fig: tree structure}.
In particular, the new rules \ref{prop: backup}, \ref{prop: branchup}, \ref{prop: up}, and \ref{prop: branchup2} below are depicted in pink.
\begin{figure}[ht]
    \centering
    \begin{tikzpicture}[scale = 1.2]
        \pgfmathsetmacro{\dx}{2.55};
        \pgfmathsetmacro{\dy}{1.57};
        \pgfmathsetmacro{\r}{0.5};
        \pgfmathsetmacro{\h}{0.23};
        \pgfmathsetmacro{\a}{10};
        \pgfmathsetmacro{\b}{35};
        \pgfmathsetmacro{\c}{20};

        \coordinate (1) at (0, 0);
        \coordinate (2) at (\dx, 0);
        \coordinate (21) at (\dx, \dy);
        \coordinate (3) at (\dx * 2, 0);
        \coordinate (4) at (\dx * 3, 0);
        \coordinate (41) at (\dx * 3, \dy);
        \coordinate (42) at (\dx * 3, \dy * 2);
        \coordinate (5) at (\dx * 4, 0);

        \coordinate (RU) at ({\r * cos(\a)}, {\r * sin(\a)});
        \coordinate (LU) at ({-\r * cos(\a)}, {\r * sin(\a)});
        \coordinate (RD) at ({\r * cos(\a)}, {-\r * sin(\a)});
        \coordinate (LD) at ({-\r * cos(\a)}, {-\r * sin(\a)});
        \coordinate (UL) at ({-\r * sin(\a)}, {\r * cos(\a)});
        \coordinate (UR) at ({\r * sin(\a)}, {\r * cos(\a)});
        \coordinate (DL) at ({-\r * sin(\a)}, {-\r * cos(\a)});
        \coordinate (DR) at ({\r * sin(\a)}, {-\r * cos(\a)});
        \coordinate (RRUU) at ({\r * cos(\b)}, {\r * sin(\b)});
        \coordinate (LLUU) at ({-\r * cos(\b)}, {\r * sin(\b)});
        \coordinate (RRDD) at ({\r * cos(\b)}, {-\r * sin(\b)});
        \coordinate (LLDD) at ({-\r * cos(\b)}, {-\r * sin(\b)});
        \coordinate (UULL) at ({-\r * sin(\c)}, {\r * cos(\c)});
        \coordinate (UURR) at ({\r * sin(\c)}, {\r * cos(\c)});
        \coordinate (DDLL) at ({-\r * sin(\c)}, {-\r * cos(\c)});
        \coordinate (DDRR) at ({\r * sin(\c)}, {-\r * cos(\c)});

        \draw (1) circle (\r);
        \draw (2) circle (\r);
        \draw (21) circle (\r);
        \draw (3) circle (\r);
        \draw (4) circle (\r);
        \draw (41) circle (\r);
        \draw (42) circle (\r);
        \draw (5) circle (\r);

        \draw (1) node {$V_{(1,1)}$};
        \draw (2) node {$V_{(2,1)}$};
        \draw (21) node {$V_{(2,2)}$};
        \draw (3) node {$V_{(3,1)}$};
        \draw (4) node {$V_{(4,1)}$};
        \draw (41) node {$V_{(4,2)}$};
        \draw (42) node {$V_{(4,3)}$};
        \draw (5) node {$V_{(5,1)}$};

        \draw ($(1) + ({0}, {-1.5*\r})$)  node {$h_1=1$};        
        \draw ($(2) + ({0}, {-1.5*\r})$)  node {$h_2=2$};        
        \draw ($(3) + ({0}, {-1.5*\r})$)  node {$h_3=1$};        
        \draw ($(4) + ({0}, {-1.5*\r})$)  node {$h_4=3$};        
        \draw ($(5) + ({0}, {-1.5*\r})$)  node {$h_5=1$};

        \draw[->,thick] ($(1) + (RU)$) -- ($(2) + (LU)$);
        \draw (\dx/2, \h) node {$m$};
        \draw[->,thick] ($(2) + (LD)$) -- ($(1) + (RD)$);
        \draw (\dx/2, -\h) node {$n$};
        
        \draw[->,thick] ($(2) + (RU)$) -- ($(3) + (LU)$);
        \draw (\dx*3/2, \h) node {$m$};
        \draw[->,thick] ($(3) + (LD)$) -- ($(2) + (RD)$);
        \draw (\dx*3/2, -\h) node {$n$};
        
        \draw[->,thick] ($(3) + (RU)$) -- ($(4) + (LU)$);
        \draw (\dx*5/2, \h) node {$m$};
        \draw[->,thick] ($(4) + (LD)$) -- ($(3) + (RD)$);
        \draw (\dx*5/2, -\h) node {$n$};
        
        \draw[->,thick] ($(4) + (RU)$) -- ($(5) + (LU)$);
        \draw (\dx*7/2, \h) node {$m$};
        \draw[->,thick] ($(5) + (LD)$) -- ($(4) + (RD)$);
        \draw (\dx*7/2, -\h) node {$m$};
        
        \draw[->, thick] ($(21) + (DL)$) -- ($(2) + (UL)$);
        \draw (\dx-\h*1.2, \dy*0.5) node {$m$};
        \draw[RubineRed,thick,dashed] plot [smooth, tension=1] coordinates { ($(2)+(RRUU)$) (\dx*1.5, \h*2.3)  ($(3)+(LLUU)$) };
        \draw[RubineRed,thick,dashed, ->] plot [smooth, tension=1] coordinates {(\dx*1.5, \h*2.3) (\dx*1.4,\dy*0.85) ($(21)+(\r,0)$) };
        \draw[RubineRed] (\dx*1.4+0.3,\dy*0.85-0.1) node {$m$};
        
        \draw[->, thick] ($(41) + (DR)$) -- ($(4) + (UR)$);
        \draw (3*\dx+\h*1.2, \dy*0.5) node {$m$};
        \draw[RubineRed,thick,dashed] plot [smooth, tension=1] coordinates { ($(4)+(RRUU)$) (\dx*3.5, \h*2.3)  ($(5)+(LLUU)$) };
        \draw[RubineRed,thick,dashed, ->] plot [smooth, tension=1] coordinates {(\dx*3.5, \h*2.3) (\dx*3.4,\dy*0.85) ($(41)+(\r,0)$) };
        \draw[RubineRed] (\dx*3.4+0.3,\dy*0.85-0.1) node {$m$};

        \draw[->, thick] ($(42) + (0,-\r)$) -- ($(41) + (0,\r)$);
        \draw (3*\dx+\h*1.2, \dy*1.5) node {$m$};
        \draw[RubineRed,thick,dashed] plot [smooth, tension=1] coordinates { ($(4)+(UULL)$) (\dx*3-0.25, \dy*0.5)  ($(41)+(DDLL)$) };
        \draw[RubineRed,thick,dashed, ->] plot [smooth, tension=1] coordinates {(\dx*3-0.25, \dy*0.5) (\dx*2.7,\dy*1.1) ($(42)+(LLDD)$) };
        \draw[RubineRed] (\dx*2.62,\dy*1.1) node {$m$};
    \end{tikzpicture}
    \caption{Continuation of the illustration when $k=8$ and $L=\{1,2,3,6\}$. The solid black arrows indicate the number of neighbours in the incoming part of any vertex in the outgoing part. The dashed pink arrows indicate the number of options ($W_{u,v}$) in the incoming part given any two vertices in the two outgoing parts.}
    \label{fig: G} 
\end{figure}
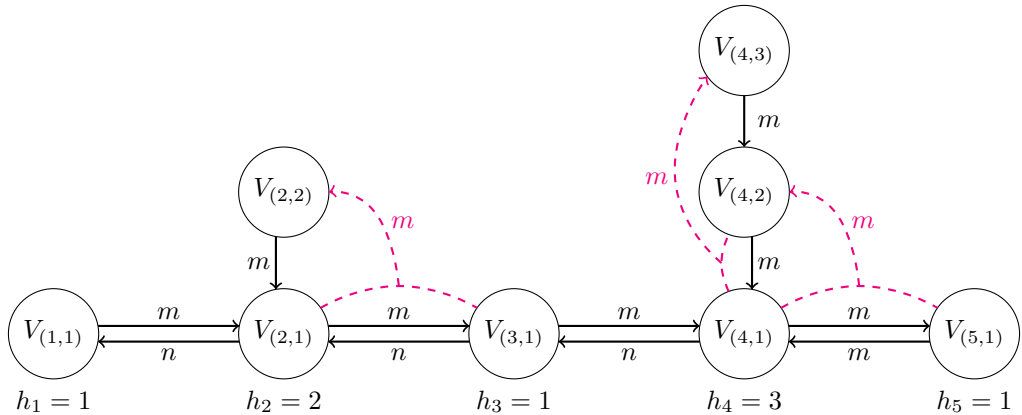
\begin{lemma} \label{lemma: properties of the sets}
    Let $n,m \ge 1$ and $k \ge 2$ be integers.
    Let $L\subseteq [k-1]$ with $1 \in L$, and let $t,h_1,\dots,t_t$ be as in \cref{lemma:decomposeL}.
    Set $P:=\{(a,b):a\in[t],b\in[h_a])$. 
    Then, there exist $k$ disjoints sets $V_{p}$ for $p \in P$ and a $k$-partite set system $\mc{F}$ with parts $(V_{p})_{p\in P}$ such that the following holds.

    Write $\mc{T}$ for the auxiliary tree on the vertex set $P$ where two vertices $(a,b)$ and $(a',b')$ are connected if $a=a',b=b'\pm 1$ or $b=b',a=a'\pm 1$.
    Define $G$ to be the graph with vertex set $\bigcup_{p \in P} V_{p}$ such that two vertices $u\in V_p, v \in V_q$ are connected precisely when $pq$ is an edge in $\mc{T}$ and some set in $\mc{F}$ contains both $u$ and $v$. Then, the following properties are satisfied.
    \begin{enumerate}\setcounter{enumi}{-1}
        \item $|V_{(1,1)}|=n^{t-2}m^{k-t+1}$ and $|\mc{F}|=n^{t-2}m^{2k-t}$. \label{prop: counts}
        \item Whenever $F,F' \in \mc{F}$, then $\{p\in P: F\cap F'\cap V_p\neq\emptyset\}$ induces a connected (or empty) subgraph of $\mc{T}$.\label{prop: connected}
        \item For all $a\in[t-1]$, each vertex in $V_{(a,1)}$ has exactly $m$ neighbours in $V_{(a+1,1)}$.\label{prop: outdegree horizontal}
        \item For all $a\in[2,t-1]$, each vertex in $V_{(a,1)}$ has exactly $n$ neighbours in $V_{(a-1,1)}$. Moreover, each vertex in $V_{(t,1)}$ has exactly $m$ neighbours in $V_{(t-1,1)}$. \label{prop: indegree horizontal}
        \item Whenever $(a,b)\in P$ with $b\geq 2$, then each vertex in $V_{(a,b)}$ has exactly $m$ neighbours in $V_{(a,b-1)}$
        .\label{prop: indegree vertical}
        \item If $a \in [t-1]$ with $h_a \ge 2$, $u \in V_{(a,1)}$, and $v \in V_{(a+1,1)}$, there exists a set $W_{u,v}\subseteq V_{(a,2)}$ of size $m$ such that every $F\in \mc{F}$ that contains $u,v$ must contain one of the vertices in $W_{u,v}$.\label{prop: backup}
        \item Let $a \in [t-1]$ with $h_a \ge 2$. If $F,F' \in \mc{F}$ satisfies $F\cap V_{(a,1)}=F' \cap V_{(a,1)}$ and $F \cap V_{(a+1,1)} \neq F'\cap V_{(a+1,1)}$, then $F\cap V_{(a,2)} \neq F' \cap V_{(a,2)}$. \label{prop: branchup}
        \item If $a \in [t-1]$, $b \in [3,h_a]$, $u \in V_{(a,b-1)}$, and $v \in V_{(a,b-2)}$, there exists a set $W_{u,v}\subseteq V_{(a,b)}$ of size $m$ such that every $F\in \mc{F}$ that contains $u,v$ must contain one of the vertices in $W_{u,v}$. \label{prop: up}
        \item Let $a \in [t-1]$ and $b \in [3,h_a]$. If $F,F' \in \mc{F}$ satisfies $F \cap V_{(a,b-1)}=F'\cap V_{(a,b-1)}$ and $F\cap V_{(a,b-2)} \neq F' \cap V_{(a,b-2)}$, then $F \cap V_{(a,b)} \neq F' \cap V_{(a,b)}$.\label{prop: branchup2}
    \end{enumerate}
\end{lemma}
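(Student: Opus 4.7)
The plan is to generalize the construction of \cref{lemma: digraph} by using the tree $\mc{T}$ in place of the path $V_1$--$\cdots$--$V_k$. Note that $\mc{T}$ has exactly $k-1$ edges, namely the $t-1$ horizontal edges $\{(a,1),(a+1,1)\}$ for $a\in[t-1]$ and the $\sum_a(h_a-1)=k-t$ vertical edges $\{(a,b),(a,b+1)\}$. I would parametrize every vertex in every part $V_p$ by a $(k-1)$-tuple of coordinates indexed by the edges of $\mc{T}$, and identify each $F\in\mc{F}$ with a global coordinate assignment; the vertex of $F$ in $V_p$ is then obtained by projecting this global assignment. Root $\mc{T}$ at $(1,1)$. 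For each edge $e$ and each part $V_p$, the coordinate of $e$ at $V_p$ takes values in $[m]$ when $V_p$ lies on the non-root side of $e$ (``past-or-current from $p$'s perspective''), and in the bigger range otherwise, namely $[n]$ when $e$ is an internal horizontal edge $\{(a,1),(a+1,1)\}$ with $a\in[t-2]$, and $[m]$ when $e$ is vertical or the final horizontal edge $\{(t-1,1),(t,1)\}$. We take $[m]\subseteq[n]$ so that a coordinate transitioning from ``future'' (range $[n]$) to ``past'' (range $[m]$) across a move is well-defined.

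Properties 0--4 should follow almost immediately. Property 0 is a direct product count over the edges of $\mc{T}$ and the ranges of the coordinates revealed at $V_{(1,1)}$ (respectively over all $k-1$ coordinates for $|\mc{F}|$). For Properties 2--4, the move from $V_p$ to an adjacent $V_q$ along $e$ corresponds to resampling the unique coordinate whose status changes (the coord of $e$), so the number of neighbours of any $u\in V_p$ in $V_q$ equals the range of that coord at $V_q$: $m$ for horizontal-right, vertical, and the horizontal-left move into $V_{(t,1)}$, and $n$ for horizontal-left moves at internal layers. For Property 1, if $F,F'\in\mc{F}$ share a vertex in $V_p$ and in $V_q$, then for every $\mc{T}$-edge $e$ on the path from $p$ to $q$ in $\mc{T}$ the coord of $e$ is past-or-current at either $p$ or $q$ (the path crosses $e$, so $e$ separates $p$ from $q$), hence $F$ and $F'$ share the coord of $e$; propagating this around the path shows that $F$ and $F'$ also share a vertex in every intermediate part.

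The heart of the argument is Properties 5--8. For Property 5, fix $u\in V_{(a,1)}$ and $v\in V_{(a+1,1)}$ appearing together in some $F\in\mc{F}$. The coordinates past-or-current at $(a,2)$ are all the coordinates past-or-current at $(a,1)$ together with the coord of the vertical edge $\{(a,1),(a,2)\}$, plus the coord of the horizontal edge $\{(a,1),(a+1,1)\}$ (which is past-or-current at $(a+1,1)$ and at $(a,2)$ but not at $(a,1)$); of these, $u$ fixes everything except the horizontal coord at layer $a$ and the vertical coord at $\{(a,1),(a,2)\}$, the former is fixed by $v$, and the latter is free in $[m]$, giving the $m$-element set $W_{u,v}$. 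Property 7 is identical with $v$ replaced by a vertex in $V_{(a,b-2)}$ and the free coord being that of $\{(a,b-1),(a,b)\}$. For Properties 6 and 8, the vertex of $F$ in $V_{(a,2)}$ (respectively $V_{(a,b)}$) retains among its past-or-current coords the value of the horizontal edge $\{(a,1),(a+1,1)\}$ (respectively the vertical edge $\{(a,b-2),(a,b-1)\}$) fixed by $v$ (respectively by the chosen vertex in $V_{(a,b-2)}$), so distinct choices of $v$ (respectively of the $V_{(a,b-2)}$-vertex) force distinct coordinates and hence disjoint sets $W_{u,v}$.

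The main obstacle I expect is the careful bookkeeping of which edge-coordinate is past-or-current at which part of $\mc{T}$ and the consistency of the $[m]\subseteq[n]$ ranges as future coords become past along each move; once the root-based convention above is pinned down, all eight properties reduce to essentially routine case analysis, mirroring the structure of the $L=\{1\}$ case but with the horizontal chain replaced by the full tree $\mc{T}$.
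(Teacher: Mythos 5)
Your parametrization of vertices by a single $(k-1)$-tuple indexed by the edges of $\mc{T}$, with each edge carrying one value whose range is determined by which side of the edge the part lies on, does not work, and the error is precisely at the step you flag as needing ``careful bookkeeping.'' Having rooted $\mc{T}$ at $(1,1)$, the part $(a,2)$ is a descendant of $(a,1)$ and \emph{not} of $(a+1,1)$; removing the horizontal edge $e=\{(a,1),(a+1,1)\}$ leaves $(a,2)$ in the root component, so $e$ is ``future'' at $(a,2)$, not ``past-or-current'' as you assert when proving Property~5. Under your actual convention, the coordinates at $(a,2)$ are exactly those at $(a,1)$ together with the past value of the vertical edge $\{(a,1),(a,2)\}$, and none of them sees the past value of $e$. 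This immediately falsifies Property~6: take $F'$ to agree with $F$ everywhere except in the past value of $e$; then $F,F'$ agree at $V_{(a,1)}$ and at $V_{(a,2)}$ but disagree at $V_{(a+1,1)}$. Property~8 fails for the same reason one level up. Conversely, if you force $e$ to count as ``past'' at all $(a,j)$ with $j\geq2$ (as your Property~6 argument needs), then the set of parts where $e$ is past is no longer the vertex set of a subtree (it omits $(a,1)$), and moving from $V_{(a,2)}$ down to $V_{(a,1)}$ flips the status of \emph{two} edges, $\{(a,1),(a,2)\}$ and $e$, giving $mn$ (or $m^2$) neighbours rather than $m$ and breaking Property~4. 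So a fixed $(k-1)$-tuple per vertex cannot simultaneously satisfy Properties~4 and~6.

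The paper's actual construction avoids this by letting vertices in the higher parts carry strictly more information: the tuple stored at a vertex of $V_{(a,b)}$ has $k+b-2$ free coordinates, not $k-1$. Concretely, the entry at position $(a,2)$ of $f_{(a,2)}$ is a \emph{pair} $(y_{a,2},z_{a,2})$ where $z_{a,2}$ is the fresh ``past'' value of the vertical edge and $y_{a,2}=y_a$ is precisely the past value of the horizontal edge $\{(a,1),(a+1,1)\}$, and the constraints $y_{i,2}=y_i$, $y_{i,j+1}=x_{i,j}$ propagate this up the column (so $f_{(a,b)}$ also remembers $x_{a,b-1}$, which is what Property~8 needs). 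The backward move from $V_{(a,b)}$ to $V_{(a,b-1)}$ still has degree exactly $m$ because the only new free datum at $V_{(a,b-1)}$ is $x_{a,b}\in[m]$; the extra remembered value $y_{a,b}$ is discarded, not resampled. Your high-level plan (reduce to routine case analysis over a tree of edge-coordinates) is the right one, but you need these enlarged, non-uniform vertex encodings on the vertical branches — together with the linking constraints between them — to make Properties~6 and~8 true while keeping~4.
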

We note that Properties~\ref{prop: connected}, \ref{prop: outdegree horizontal}, and \ref{prop: indegree horizontal} correspond to \ref{item: recall interval}, \ref{item: recall c to the right}, and \ref{item: recall n to the left}, respectively; Property~\ref{prop: indegree vertical} is the analogue of Property~\ref{item: recall c to the right} for $V_{(a,b)}$ with $b \ge 2$; Properties~\ref{prop: backup} and \ref{prop: branchup} correspond to \ref{item: new rule up}; and Properties~\ref{prop: up} and \ref{prop: branchup2} give the analogue of Property~\ref{item: new rule up} for the case $h_a \ge 3$.
We first prove \cref{theorem: forbidding sunflowers with one but zero} assuming \cref{lemma: properties of the sets} holds.
\begin{proof}[Proof of \cref{theorem: forbidding sunflowers with one but zero}]
    As discussed at the beginning of the appendix, we show $ r_{\textnormal{sf}}(k,L,m + 1) \le m^{-(k-1)}$.
    Let $t,h_1,\dots,h_t$ be as in \cref{lemma:decomposeL} and recall that 
        \begin{align*}
        [k]\setminus L=\Big\{\ell: \ell=\sum_{i=i_0}^t h_i\textnormal{ for some }i_0 \in [t-1]\Big\}\quad\text{and}\quad
        L=\{1\}\cup\bigcup_{i_0=1}^{t-1}  \Big\{\ell:\sum_{i=i_0+1}^t h_i< \ell<\sum_{i=i_0}^t h_i\Big\}.
    \end{align*}
    Let $n$ be a sufficiently large integer in terms of $m$ and $k$, and let $(V_p)_{p\in P}$, $\mc{T}$, $\mc{F}$, and $G$ be as in \cref{lemma: properties of the sets}.
    Here, $P=\{(a,b):a\in[t],b\in[h_a]\}$.

    We claim that $\mc{F}$ has no $L$-sunflower with $m+1$ petals. 
    Suppose for contradiction that there exists a sunflower with $m+1$ petals whose kernel has size in $L$.
    Write $A$ for the projection of the kernel to $P$ (i.e., $A$ is the set of $p\in P$ such that the kernel intersects $V_p$). 
    We know $A$ is non-empty as $0 \notin L$, and $A$ is connected in $\mathcal{T}$ by Property~\ref{prop: connected}.
    In particular, the set $\{a: (a,1) \in A\}$ must form an interval.
    Moreover, by Property~\ref{prop: outdegree horizontal}, $\{a:(a,1)\in A\}$ must be empty or of the form $[a_0,t]$ for some $a_0\in[t]$, and by Property~\ref{prop: indegree vertical}, for each $a$, the set $\{b:(a,b)\in A\}$ is empty or of the form $[1,b_a]$ for some $b_a\in[h_a]$. 
    Furthermore, by Property~\ref{prop: backup}, if $h_a\geq 2$ and $(a,1),(a+1,1)\in A$, then we must have $(a,2)\in A$, and by Property~\ref{prop: up}, if $b\in[3,h_a]$ and $(a,b-1),(a,b-2)\in A$, then $(a,b)\in A$. 
    It follows that $A$ is of the form $\{(a,b)\in P:a\geq a_0\}$ for some $a_0 \in [t]$, and hence $|A|=\sum_{i=a_0}^t h_i$. 
    This means $|A|\not \in L$, unless $a_0=t$ and $A=\{(t,1)\}$. 
    But in this case Property~\ref{prop: indegree horizontal} guarantees this sunflower can have at most $m$ petals, which is a contradiction.
    This proves the claim.
    
    Now, assume the subfamily $\mc{F}'\subseteq \mc{F}$ avoids intersections of size in $L$. 
    Our goal is to show 
    $$
    |\mc{F}'| \le \left(m^{-(k-1)}+o(1)\right)|\mc{F}|,
    $$
    where $o(1)$ stands for some function tending to zero as $n\to\infty$ (with $k,m$ fixed).
    Similarly to the proof when $L=\{1\}$, we want to classify edges in $G$ into heavy and light edges, and show that ``very few'' sets in $\mc{F}'$ contain at least one light edge, and every vertex $u$ in $G$ has at most one heavy edge ``to the right'' as well as at most one ``upwards''.
    
    We first classify edges ``to the right''.
    For each $a\in [t-1]$ and edge $uv$ of $G$ with $u \in V_{(a,1)}$ and $v \in V_{(a+1,1)}$, the edge $uv$ is \emph{heavy} if at least $n^{a-3/2}$ sets in $\mc{F}'$ contains $uv$, and \emph{light} otherwise. 
    We have the following claim.
    \begin{claim} \label{claim: horizontal heavy}
        Suppose $n$ is sufficiently large.
        For every $a \in [t-1]$ and every vertex $u \in V_{(a,1)}$, there is at most one $v \in V_{(a+1,1)}$ such that $uv$ is a heavy edge.
    \end{claim}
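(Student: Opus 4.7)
The plan is to argue by contradiction, extending the idea used in the case $L=\{1\}$. Suppose that $uv$ and $uv'$ are both heavy with $v \neq v'$ in $V_{(a+1,1)}$. Fix any $F \in \FF'$ containing $uv$, and let $F' \in \FF'$ be any set containing $uv'$. Writing $A_{F,F'} := \{p \in P : F \cap F' \cap V_p \neq \emptyset\}$, and noting that by $k$-partiteness $|A_{F,F'}| = |F \cap F'|$, we have $(a,1) \in A_{F,F'}$ (since $u \in F \cap F'$) but $(a+1,1) \notin A_{F,F'}$ (since $v \neq v'$). By \cref{prop: connected}, $A_{F,F'}$ induces a connected subgraph of $\mathcal{T}$. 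If $h_a \geq 2$, \cref{prop: branchup} yields $(a,2) \notin A_{F,F'}$, and then tree-connectedness forces $(a,b) \notin A_{F,F'}$ for every $b \geq 2$. Consequently $A_{F,F'} \subseteq \{(a,1)\} \cup \bigcup_{i < a} \{(i,b) : b \in [h_i]\}$.

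Since $1 \in L$ and $\FF'$ avoids intersections of size in $L$, we have $|A_{F,F'}| = |F \cap F'| \geq 2$. When $a = 1$ this is already impossible (the allowed set reduces to $\{(1,1)\}$), giving an immediate contradiction. When $a \geq 2$, tree-connectedness together with $(a,1) \in A_{F,F'}$ forces $(a-1,1) \in A_{F,F'}$; hence $F$ and $F'$ share a common vertex $u^\star := F \cap V_{(a-1,1)}$, which is fixed once $F$ is. I would then bound extensions via \cref{prop: outdegree horizontal,prop: indegree horizontal,prop: backup,prop: up}: once the triple $(u^\star, u, v')$ is fixed, any element of $\FF$ extending it admits at most $n$ choices at each of the $a-2$ leftward steps down to $V_{(1,1)}$, at most $m$ at each of the $t-a-1$ rightward steps up to $V_{(t,1)}$, and at most $m$ at each of the $k-t$ upward vertices across all columns. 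Hence the number of candidate $F'$ is at most $n^{a-2} m^{k-a-1}$.

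On the other hand, heaviness of $uv'$ provides at least $n^{a-3/2}$ members of $\FF'$ containing $uv'$, and by the above each must extend $(u^\star, u, v')$. Comparing bounds, one gets a contradiction as soon as $n^{a-3/2} > n^{a-2} m^{k-a-1}$, i.e.\ $n > m^{2(k-a-1)}$, which holds for $n$ sufficiently large in terms of $k$ and $m$. The most delicate step will be the first paragraph's combined use of \cref{prop: branchup} and \cref{prop: connected}: these are precisely what prevents $F \cap F'$ from "escaping upward" through column $a$ (or through any column to the right), confining the support of $F \cap F'$ to the leftward chain and thereby allowing the essentially one-dimensional counting argument from the $L=\{1\}$ case to carry through.
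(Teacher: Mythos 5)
Your proposal is correct and follows essentially the same route as the paper's proof: assume two heavy edges $uv$ and $uv'$ at $u$, observe (via $1\in L$, connectedness in $\mathcal{T}$, and Property~\ref{prop: branchup}) that any $F'\in\FF'$ through $uv'$ must agree with a fixed $F\ni uv$ on $V_{(a-1,1)}$, and then count the extensions of the triple $(u^\star,u,v')$ to contradict heaviness. Your spelling-out of the connectivity step (tree-connectedness excluding all $(a,b)$, $b\ge 2$, after Property~\ref{prop: branchup} kills $(a,2)$) and your slightly sharper count $n^{a-2}m^{k-a-1}$ in place of the paper's cruder $n^{a-2}m^k$ are cosmetic differences; the underlying argument is identical.
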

    \begin{proof}
        Assume that $u\in V_{(a,1)}$ and $v,v'\in V_{(a+1,1)}$ are such that $uv,uv'$ are both heavy. 
        Fix any set $F \in \mc{F}'$ containing $uv$ (so that $F\cap V_{(a,1)}=\{u\},F \cap V_{(a+1,1)} = \{v\}$). 
        Note that whenever $F'\in \mc{F}$ contains $uv'$, we must have $F\cap V_{(a,1)}=F'\cap V_{(a,1)}$ and $F\cap V_{(a+1,1)}\not =F'\cap V_{(a+1,1)}$. 
        Since $1\in L$, the sets $F,F'$ cannot intersect in exactly $1$ vertex. 
        By Property~\ref{prop: branchup}, we have $F\cap V_{(a,2)}\neq F'\cap V_{(a,2)}$.
        Using Property~\ref{prop: connected}, we see that we must have $a\ge 2$ and $F \cap V_{(a-1,1)}=F' \cap V_{(a-1,1)}$. 
        But the number of sets containing $u,v'$ and a specified vertex from $V_{(a-1,1)}$ is at most $n^{a-2}m^k$.
        Indeed, there are $n^{a-2}$ choices for the vertices in $V_{(a-2,1)},V_{(a-3,1)},\dots,V_{(1,1)}$ by repeated application of Property~\ref{prop: indegree horizontal}.
        After this, we look at the vertices in the sets $V_{(a+1,1)},V_{(a+2,1)},\dots,V_{(t,1)}$, and then those in sets of form $V_{(i,2)}$, then those in sets of form $V_{(i,3)}$, etc. Using Properties \ref{prop: outdegree horizontal}, \ref{prop: backup}, and \ref{prop: up}, each such a step gives at most $m$ choices, so in total they contribute at most a factor of $m^k$. So we get the claimed upper bound $n^{a-2}m^k$ of sets in $\FF'$ containing $uv'$, contradicting the assumption that $uv'$ is a heavy edge when $n$ is sufficiently large.
    \end{proof}

    We are now left to classify edges ``upwards'', i.e., edges between $V_{(a,b)}$ and $V_{(a,b+1)}$, for some $a \in [t-1],b\in [h_a-1]$.
    Note that there are $k-t$ such pairs $(a,b)$, so $k-t$ different ``types'' of edges left to classify.
    We will classify these $k-t$ types of edges recursively according to the ordering on these pairs $(a,b)$ where pairs $(a,b)$ with larger $a$-coordinates are listed first, and if two pairs have the same $a$-coordinate, then the one with smaller $b$-coordinate is listed earlier.
    Let this ordering be $(a_1,b_1),\dots,(a_{k-t},b_{k-t})$. (Thus, for example, in the setting depicted in \cref{fig: tree structure}, this ordering is given by $(4,1)$, $(4,2)$, $(2,1)$.)
    
    Assume that $s \in [k-t]$, and for all $r \in [s-1]$ we have already classified edges between $V_{(a_r,b_r)}$ and $V_{(a_r,b_r+1)}$. 
    Then, we say that a set $F \in \mc{F}'$ is {\em $(s-1)$-completely-heavy} if between all the parts where the edges of $G$ have already classified, $F$ gives a heavy edge in $G$. (In other words, this says that between $V_{(a,1)}$ and $V_{(a+1,1)}$ for all $a\in [t-1]$, as well as between $V_{(a_r,b_r)}$ and $V_{(a_r,b_r+1)}$ for all $r \in [s-1]$, the set $F$ must contain a heavy edge.)
    Now, given any edge $uv$ of $G$ with $u\in V_{(a_s,b_s)}$, $v\in V_{(a_s,b_s+1)}$, we say that $uv$ is \emph{heavy} if there are at least $n^{a_s-3/2}$ sets in $\mc{F}'$ that are $(s-1)$-completely-heavy and contain both $u$ and $v$.
    Similarly to Claim \ref{claim: horizontal heavy}, we have the following.
    \begin{claim} \label{claim: vertical heavy}
        Suppose $n$ is sufficiently large.
        For every $s \in [k-t]$, the heavy edges between $V_{(a_s,b_s)}$ and $V_{(a_s,b_s+1)}$ form a matching.
    \end{claim}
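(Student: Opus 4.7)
The plan is to mirror the proof of Claim~\ref{claim: horizontal heavy}. I will assume for contradiction that some $u\in V_{(a_s,b_s)}$ is incident to two distinct heavy edges $uv,uv'$ with $v,v'\in V_{(a_s,b_s+1)}$, fix an $(s-1)$-completely-heavy $F\in\mc{F}'$ containing $uv$, and bound the number of $(s-1)$-completely-heavy $F'\in\mc{F}'$ that contain $uv'$. The goal is to show that this count is $o(n^{a_s-3/2})$, contradicting the heaviness of $uv'$. (For the direction where a vertex in $V_{(a_s,b_s+1)}$ has two heavy predecessors, the same ideas apply symmetrically; alternatively, one can interpret ``form a matching'' in the one-sided sense used in Claim~\ref{claim: horizontal heavy}, which is what is actually needed for the forthcoming propagation.)

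For each such $F'$, I will work with the agreement set $A:=\{p\in P:F\cap V_p=F'\cap V_p\}$; since each set in $\mc{F}$ is $k$-partite with exactly one vertex per part, $|F\cap F'|=|A|$, and Property~\ref{prop: connected} makes $A$ a connected subgraph of $\mc{T}$ containing $(a_s,b_s)$ but missing $(a_s,b_s+1)$. Since $F,F'\in\mc{F}'$, we must have $|A|\notin L$. I will then use the inductive hypotheses---Claim~\ref{claim: horizontal heavy}, together with Claim~\ref{claim: vertical heavy} for every $r<s$---to identify a large forced portion of $A$. Within column $a_s$, the already-classified vertical matchings at $(a_s,1)$--$(a_s,2),\ldots,(a_s,b_s-1)$--$(a_s,b_s)$ (which come before step $s$ in the ordering) uniquely determine, by descent from $u$, the vertex of any $(s-1)$-completely-heavy set in $V_{(a_s,b)}$ for $1\le b\le b_s$, so $F$ and $F'$ must agree throughout $V_{(a_s,1)},\ldots,V_{(a_s,b_s)}$. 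From $V_{(a_s,1)}$ the heavy-horizontal matching of Claim~\ref{claim: horizontal heavy} then propagates equality rightward along the spine to every $V_{(a,1)}$ with $a\ge a_s$, and the vertical matchings for all columns $a>a_s$ (whose edges are classified earlier) propagate this equality upward through every vertex of those columns. Connectedness of $A$ also prevents it from jumping over $(a_s,b_s+1)$ into higher rows of column $a_s$. Thus $A$ contains---and, up to possible leftward extension, equals---the set $\{(a_s,b):1\le b\le b_s\}\cup\{(a,b):a>a_s,\ b\in[h_a]\}$, whose cardinality is $b_s+\sum_{a>a_s}h_a$.

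The crux is then to invoke Lemma~\ref{lemma:decomposeL}: its description of $L$ yields $L\supseteq\{\ell:\sum_{i>a_s}h_i<\ell<\sum_{i\ge a_s}h_i\}$, and since $1\le b_s\le h_{a_s}-1$, the forced cardinality $b_s+\sum_{a>a_s}h_a$ lies strictly inside this interval, so it belongs to $L$. Because $|A|\notin L$ is required, $A$ must strictly exceed its forced part, and the only direction of $\mc{T}$ in which extension is possible is leftward along the spine, starting with $(a_s-1,1)$. If $a_s=1$, this extension cannot occur, so no valid $F'$ distinct from $F$ exists and the count is $0$. If $a_s\ge 2$, then $F$ and $F'$ must agree on $V_{(a_s-1,1)}$, so $F'\cap V_{(a_s-1,1)}$ is pinned to the specific vertex $F\cap V_{(a_s-1,1)}$.

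To finish, I will count valid $F'$ under this extra pinning by propagation, exactly as in Claim~\ref{claim: horizontal heavy}: vertices in column $a_s$ up to row $b_s$ and in all columns $a>a_s$ are uniquely determined; moving left along the spine from $V_{(a_s-1,1)}$, each of the $a_s-2$ remaining spine vertices admits at most $n$ preimages by Property~\ref{prop: indegree horizontal}; and the remaining vertical-branch vertices each contribute at most $m$ choices by Properties~\ref{prop: backup} and~\ref{prop: up}. The total is at most $n^{a_s-2}m^k$, which is $o(n^{a_s-3/2})$ once $n$ is large in terms of $k$ and $m$, contradicting the heaviness of $uv'$. The main obstacle, as in the horizontal case, will be the structural step of identifying the forced portion of $A$ and verifying via Lemma~\ref{lemma:decomposeL} that its cardinality lies in $L$; once this is in place, the remaining bookkeeping is routine and parallels Claim~\ref{claim: horizontal heavy}.
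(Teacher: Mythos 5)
Your plan for the direction ``no $u\in V_{(a_s,b_s)}$ has two heavy up-edges'' is essentially the paper's argument: forced agreement on $\{(a_s,b):b\le b_s\}\cup\{(a,b'):a>a_s\}$, the appeal to Lemma~\ref{lemma:decomposeL} to see that this cardinality lies in $L$, the forced leftward extension pinning $F'\cap V_{(a_s-1,1)}$, and the $n^{a_s-2}m^k$ count giving a contradiction with heaviness. That part is fine.

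The gap is your treatment of the other half of the matching statement: that no $v\in V_{(a_s,b_s+1)}$ can receive two heavy edges. You dispose of it with ``the same ideas apply symmetrically; alternatively, one can interpret `form a matching' in the one-sided sense used in Claim~\ref{claim: horizontal heavy}, which is what is actually needed for the forthcoming propagation.'' Both halves of that sentence are problematic. The one-sided escape is inconsistent with your own argument two sentences later: the step ``by descent from $u$'' through rows $b_s-1,\dots,1$ of column $a_s$ relies precisely on the \emph{into}-side of the matchings established at earlier steps $(a_s,1),\dots,(a_s,b_s-1)$ (for each vertex of $V_{(a_s,b)}$ there is at most one heavy edge down to $V_{(a_s,b-1)}$); if you only keep the ``out of'' direction, the descent does not determine anything. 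So you genuinely do need both sides of the matching in the inductive hypothesis, and you must prove both at step $s$. As for ``the same ideas apply symmetrically,'' that is not how the paper does it and the symmetric counting is unnecessary machinery: if $F$ contains $u,v$ and $F'$ contains $u',v$ with $u\neq u'$, then $F,F'$ agree on $V_{(a_s,b_s+1)}$ and disagree on $V_{(a_s,b_s)}$, Property~\ref{prop: branchup2} forces them to disagree on $V_{(a_s,b_s+2)}$ (when it exists), and Property~\ref{prop: connected} then forces the agreement set to be exactly $\{(a_s,b_s+1)\}$, i.e.\ $|F\cap F'|=1\in L$ --- an immediate contradiction requiring no heaviness or counting at all. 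You should include this short argument for the into-side; once you do, the rest of your proposal goes through as in the paper.
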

    \begin{proof}
        We will prove the claim by an induction on $s$.
        Let us assume that we already showed that the heavy edges between $V_{(a_r,b_r)}$ and $V_{(a_r,b_r+1)}$ form a matching for all $r \in [s-1]$.
        For convenience, write $a=a_s$ and $b=b_s$; we know that $a \in [t-1]$ and $b \in [h_a-1]$.
        First, observe that for every $v \in V_{(a,b+1)}$, there can be at most one $u \in V_{(a,b)}$ such that $uv$ is a heavy edge.
        Indeed, suppose $uv,u'v$ are distinct heavy edges with $u,u' \in V_{(a,b)}$.
        Consider any $F \in \mc{F}'$ containing $u,v$ and any $F' \in \mc{F}'$ containing $u',v$.
        Properties \ref{prop: connected} and \ref{prop: branchup2} imply $|F\cap F'|=1$, contradicting the fact that $\mc{F}$ avoids intersections of size in $L$.

        Second, let $u \in V_{(a,b)}$ and suppose for contradiction that $uv,uv'$ are distinct heavy edges with $v,v' \in V_{(a,b+1)}$.
        Consider any $(s-1)$-completely-heavy set $F \in \mc{F}'$ containing $u,v$ and any $(s-1)$-completely-heavy set $F' \in \mc{F}'$ containing $u,v'$.
        The definition of being $(s-1)$-completely-heavy implies that both $F$ and $F'$ contain only heavy edges between the parts $V_{(a',1)}$ and $V_{(a'+1,1)}$ for all $a' \in [t-1]$, and between the parts $V_{(a_r,b_r)}$ and $V_{(a_r,b_r+1)}$ for all $r \in [s-1]$.
        Using Claim~\ref{claim: horizontal heavy} and the induction hypothesis (as well as the definition of our ordering of the pairs $(a,b)$), we see that $F$ and $F'$ agree on every part $V_{(a',b')}$ with $a' > a$ or $a'=a,b'\le b$, i.e., $I:=\{p \in P: F\cap F'\cap V_p\neq \emptyset\}$ satisfies $I\supseteq \{(a',b') \in P: a' > a \text{ or } a'=a, b'\le b\}$.
        Notice that the number of such $(a',b')$ is equal to $\sum_{i=a+1}^t h_i + b \in L$.
        Thus, $F$ and $F'$ must intersect in some other part as well.
        Using Property \ref{prop: connected}, we must have $a \ge 2$ and $F\cap V_{(a-1,1)}=F\cap V'_{(a-1,1)}$. 
        Using a similar argument as in Claim \ref{claim: horizontal heavy}, we see that the number of $(s-1)$-completely heavy sets containing $u,v'$ and a specified vertex from $V_{(a-1,1)}$ is at most $n^{a-2}m^k$.
        This contradicts that $uv'$ is an heavy edge when $n$ is sufficiently large.
        Therefore, there is at most one heavy edges $uv$ with $v \in V_{(a,b+1)}$, and the claim follows.
    \end{proof}

    We now complete the proof by counting sets in $\mc{F}'$.
    By Properties~\ref{prop: outdegree horizontal}, \ref{prop: indegree horizontal}, and then \ref{prop: counts}, we know that for all $a\in[t-1]$, $|V_{(a,1)}|=|V_{(1,1)}|(m/n)^{a-1}=n^{t-a-1}m^{k-t+a}$.
    Notice that the number of sets in $\mc{F}'$ containing some light edge between $V_{(a,1)}$ and $V_{(a+1,1)}$ (for some $a \in [t-1]$) is at most $$
        \sum_{a=1}^{t-1} e_G(V_{(a,1)},V_{(a+1,1)})\cdot n^{a-3/2}
        =\sum_{a=1}^{t-1} m\cdot |V_{(a,1)}| \cdot n^{a-3/2}
        =\sum_{a=1}^{t-1} n^{t-a-1} m^{k-t+a+1} \cdot n^{a-3/2}=o\left(m^{-(k-1)}|\mc{F}|\right),
    $$
    where we used $|\FF|=n^{t-2}m^{2k-t}$ (by Property~\ref{prop: counts}) in the last step.
    
    For all $a \in [t-1]$ and $b\in[h_a-1]$, by Properties \ref{prop: outdegree horizontal} and~\ref{prop: backup} (if $b=1$) or Properties~\ref{prop: indegree vertical}~and~\ref{prop: up} (if $b>1$), each vertex in $V_{(a,b)}$ has at most $m^2$ $G$-neighbours in $V_{(a,b+1)}$. Hence, using Property~\ref{prop: indegree vertical}, we have $|V_{(a,b)}| \le |V_{(a,1)}|\cdot m^{b-1}=n^{t-a-1}m^{k-t+a+b-1}$ for all $a \in [t-1],b\in[h_a]$.
    By a similar calculation to the one above, the number of sets in $\mc{F}'$ for which all the corresponding edges between parts $V_{(a,1)}$ and $V_{(a+1,1)}$ are heavy but an edge between some pair $V_{(a,b)}$, $V_{(a,b+1)}$ is light, is $o(m^{-(k-1)}|\mc{F}|)$. Indeed, if $s\in[k-t]$, then the number of sets in $\FF'$ which are $(s-1)$-completely heavy but not $s$-completely heavy is easily seen to be at most
    $$
    |e_G(V_{(a_s,b_s)},V_{(a_s,b_s+1)})|n^{a_s-3/2}\leq n^{t-a_s-1}m^{k-t+a_s+b_s+1}n^{a_s-3/2}=o\left(m^{-(k-1)}|\mc{F}|\right),
    $$
    giving the count mentioned above.
    Thus, $\mc{F}'$ has $|\mc{F}'|-o(m^{-(k-1)}|\mc{F}|)$ sets $F$ such that between any two adjacent parts $V_{a,b},V_{a',b'}$ (in $\mc{T}$), the two corresponding vertices in $F$ form a heavy edge of $G$.
    However, using Claim \ref{claim: horizontal heavy} and Claim \ref{claim: vertical heavy}, we see that the number of such sets is at most $|V_{(1,1)}|=m^{-(k-1)}|\mc{F}|$. 
    Hence, $|\mc{F}'|\leq (1+o(1))m^{-(k-1)}|\mc{F}|$, finishing the proof.
\end{proof}

It remains to prove \cref{lemma: properties of the sets}, i.e., construct $(V_{p})_{p \in P}$ and $\mc{F}$. 
The construction is quite technical, but it may be helpful for the reader to compare it with the construction used in the proof of \cref{lemma: digraph}, which deals with the case $L=\{1\}$ (i.e., $t=k$, $h_1=\dots=h_k=1$).
\begin{proof}[Proof of \cref{lemma: properties of the sets}]
    Let $\mathcal{X}$ be the collection of sequences $$X=((x_i,y_i)_{i\in [t-1]},(x_{i,j},y_{i,j},z_{i,j})_{i\in[t-1],j\in[2,h_i]})$$ satisfying
\begin{itemize}
    \item  $x_{t-1},y_{t-1}\in[m]$;
    \item  $x_i\in[n]$ and $y_i\in[m]$ for all $i\in[t-2]$;
    \item $x_{i,j},y_{i,j},z_{i,j}\in[m]$ for all $i\in[t-1]$ and $j\in[2,h_i]$;
    \item $y_{i,2}=y_{i}$ whenever $i\in[t-1]$ with $h_i\geq 2$;
    \item $y_{i,j+1}=x_{i,j}$ whenever $i\in [t-1]$ and $j\in[2,h_i-1]$.
\end{itemize}

Every $X \in \mc{X}$ will correspond to a set in our family $\mc{F}$ via the following functions, which will be the $k$ elements in the set.
For each $X \in \mc{X}$ and $p=(a,b) \in P$, let $f_p=f^X_p$ be the function with domain $P$ such that $f_{(a,b)}(t,1) =(a,b)$ and

\begin{align*}
    \textnormal{for } i \in \{1,\dots,a-1\}, \quad\quad\quad f_{(a,b)}(i,j) &=
    \begin{cases}
        y_i & \textnormal{if } j=1, \\
        (x_{i,j},y_{i,j}) & \textnormal{if } j \ge 2;
    \end{cases}  \\[3pt]
    \textnormal{for } i = a, \quad\quad\quad f_{(a,b)}(i,j) &=
    \begin{cases}
        x_i & \textnormal{if } j=1, \\
        (y_{i,j},z_{i,j}) & \textnormal{if } 2 \le j \le b, \\
        x_{i,j} & \textnormal{if } j=b+1, \\
        (x_{i,j}, y_{i,j}) & \textnormal{if } j \ge b + 2;
    \end{cases} \\[3pt]
    \textnormal{for } i\in \{a+1,\dots,t-1\}, \quad\quad\quad f_{(a,b)}(i,j) &=
    \begin{cases}
        x_i & \textnormal{if } j=1, \\
        x_{i,j} & \textnormal{if } j=2, \\
        (x_{i,j}, y_{i,j}) & \textnormal{if } j \ge 3.
    \end{cases}
\end{align*}

Let $F_X=\{f_p^X:p\in P\}$ for each $X\in \mathcal{X}$; this is a set of $|P|=k$ functions.
For each $p\in P$, we define $V_{p}=\{f_p^X:X\in \mathcal{X}\}$, and we let $\mc{F} = \{F_X:X\in \mathcal{X}\}$. 
It is easy to see that $F_X \neq F_{X'}$ for distinct $X,X' \in \mc{X}$, and $\mc{F} $ is $k$-partite with parts $(V_p)_{p\in P}$.
Before checking the properties in \cref{lemma: properties of the sets}, we need the following two auxiliary properties.
These two properties can be verified using the definition of $(f_{p})_{p \in P}$.
For example, to check Property \ref{propi}, one can see that $f_{(a,1)}(i,j)=f_{(a+1,1)}(i,j)$ whenever $i \in [1,a-1]\cup[a+1,t-1]$, so it suffices to look at $i=a$.
The rest is then easy to see.
\begin{enumerate}[(i)]
    \item Whenever $a\in[t-1]$, then $f_{(a,1)}$ and $f_{(a+1,1)}$ agree at every point apart from $(t,1)$, except possibly $(a,1)$ and $(a,2)$ (if $h_a\geq 2$), for which values we have $f_{(a,1)}(a,1)=x_a$, $f_{(a+1,1)}(a,1)=y_a$, $f_{(a,1)}(a,2)=x_{a,2}$, $f_{(a+1,1)}(a,2)=(x_{a,2},y_{a,2})=(x_{a,2},y_{a})$.\label{propi}
    \item Whenever $a\in[t-1]$ and $b\in[2,h_a]$, then $f_{(a,b)}$ and $f_{(a,b-1)}$ agree on every point apart from $(t,1)$, except possibly $(a,b)$ and $(a,b+1)$ (if $h_a\geq b+1)$, for which values we have $f_{(a,b)}(a,b)=(y_{a,b},z_{a,b})$, $f_{(a,b-1)}(a,b)=x_{a,b}$, $f_{(a,b)}(a,b+1)=x_{a,b+1}$, $f_{(a,b-1)}(a,b+1)=(x_{a,b+1},y_{a,b+1})=(x_{a,b+1},x_{a,b})$.\label{propii}
\end{enumerate}

Now, we (briefly) verify the properties required by the lemma.
\begin{description}
    \item[For Property \ref{prop: counts}:] Note that $$
        f_{(1,1)}(i,j) =
        \begin{cases}
            x_i & \textnormal{if } j=1, \\
            x_{i,2} & \textnormal{if } j=2, \\
            (x_{i,j}, y_{i,j})=(x_{i,j},x_{i,j-1}) & \textnormal{if } j > 2.
        \end{cases}
    $$ 
        This means $f_{(1,1)}$ is uniquely determined by all $x_i$s and $x_{i,j}$s, so $|V_{(1,1)}|=m\cdot n^{t-2}\cdot m^{k-t}=n^{t-2}m^{k-t+1}$.
        In addition, $F_X\neq F_{X'}$ whenever $X\neq X'$, so a similar calculation shows $|\mc{F}|=m^2 \cdot n^{t-2} \cdot m^{t-2} \cdot m^{2(k-t)}=n^{t-2}m^{2k-t}$.
    \item[For Property \ref{prop: connected}:] If $f^X_{(a,1)}=f^Y_{(a,1)}$ but $f^X_{(a+1,1)}\not =f^Y_{(a+1,1)}$, then \ref{propi} shows $X$ and $Y$ disagree on the value of $y_a$ and hence $f^X_{a',b'}\not =f^Y_{a',b'}$ whenever $a'\geq a+1$. 
        Similarly, if $f^X_{(a,b)}=f^Y_{(a,b)}$ but $f^X_{(a,b-1)}\not =f^Y_{(a,b-1)}$, then $X$ and $Y$ disagree on the value of $x_{a,b}$ and hence $f^X_{a',b'}\not =f^Y_{a',b'}$ whenever $a'\not = a$ and also whenever $a'=a,b'\leq b-1$. 
    \item[For Property \ref{prop: outdegree horizontal}]: This follows from \ref{propi} for the $m$ choices of $y_a$.
    \item[For Property \ref{prop: indegree horizontal}]: This follows from \ref{propi} for the $n$ choices of $x_{a-1}$ when $a \in [t-2]$ and the $m$ choices of $x_{a-1}$ when $a=t-1$.
    \item[For Property \ref{prop: indegree vertical}]: This follows from \ref{propii} for the $m$ choices of $x_{a,b}$.
    \item[For Property \ref{prop: backup}]: Given $f_{(a,1)},f_{(a+1,1)}$, we know the value of $y_a=y_{a,2}$ (using \ref{propi}). Then, there are $m$ choices for $z_{a,2}$, and hence for $f_{(a,2)}$ by \ref{propii} with $b=2$.
    \item[For Property \ref{prop: branchup}]: If $F_X$ and $F_Y$ give the same $f_{(a,1)}$ but different $f_{(a+1,1)}$, then $X$ and $Y$ disagree on the value of $y_a=y_{a,2}$ (using \ref{propi}). Hence, they give different vertices $f_{(a,2)}$ (using \ref{propii} for $b=2$).
    \item[For Property \ref{prop: up}]: Given $f_{(a,b-1)},f_{(a,b-2)}$, we know the value of $x_{a,b-1}=y_{a,b}$ (by \ref{propii}). Then, there are $m$ choices for $z_{a,b}$ and hence for $f_{(a,b)}$ by \ref{propii}.
    \item[For Property \ref{prop: branchup2}]: If $F_X$ and $F_Y$ give the same $f_{(a,b-1)}$ but different $f_{(a,b-2)}$, then $X$ and $Y$ disagree on the value of $x_{a,b-1}=y_{a,b}$ (using \ref{propii}). Hence, they give different vertices $f_{(a,b)}$ (using \ref{propii} again).
    \qedhere
\end{description}
\end{proof}

\section{Proof of Lemma~\ref{lem:qs:fermionic}}
\label{section: appendix quantum}

Finally, we explain how to obtain the algorithm we need for the applications in quantum computing.

\begin{proof}[Proof of Lemma~\ref{lem:qs:fermionic}]
    By \cref{thm:modularstructure}, there exists some $\chi=\min\{O_k(m^k),|\FF|\}$ such that for any non-negative weighting $w \in \mb{R}_{\ge 0}^{\mc{F}}$ on $\mc{F}$, there is a subfamily $\mc{F}'$ that avoids intersections of size in $L$ and whose weight satisfies $\sum_{F \in \mc{F}'} w(F) \ge \frac{1}{\chi}\sum_{F \in \mc{F}} w(F)$; this $\mc{F}'$ corresponds to an independent set of the associated graph $G_\mc{F}$.
    Moreover, the proof of \cref{thm:modularstructure} guarantees that we can find this $\mc{F}'$ in running time $\mathrm{poly}(n,|\mc{F}|)=\mathrm{poly}(n^k)$.
    Indeed, in the proof, we iteratively shrink the ground set by two each time and keep track of the weighting along the way.
    The key step of this iteration is to locate two elements in the ground set that accumulate sufficiently large weights (see Claim~\ref{claim:modularstructure_1} and surrounding arguments), which runs efficiently in time $\mathrm{poly}(n,|\mc F|)\le\mathrm{poly}(n^k)$.
    Thus, in total, the algorithm has running time $\mathrm{poly}(n^k)$.
    
    As we have seen in \cref{theorem: modular clique fractional chromatic number}, the above implies that the fractional chromatic number of $G_\mc{F}$ is at most $\chi$. By increasing the value of $\chi$ if necessary, we may assume that $\chi\geq 2$.
    In order to efficiently find a ``small'' fractional colouring, it suffices to find $T=20\chi^2\ln(|\mc{F}|)=\mathrm{poly}(n^k)$ independent sets $\mc{I}^{(1)},\dots,\mc{I}^{(T)}$ of $G_\mc{F}$ in running time $\mathrm{poly}(n^k)$ such that if we uniformly sample $i \in [T]$, the random independent set $\mc{I}:=\mc{I}^{(i)}$ satisfies $\Pr[F \in \mc{I}] = \Omega(\chi^{-1})$ for all $F \in \mc{F}$.
    Then, the distribution of $\mc{I}$ is a fractional colouring of $G_\mc{F}$ of size $O(\chi)=O_k(m^k)$.

    To this end, we apply the multiplicative weight update method as follows; see \cite[Section 3.3]{arora2012multiplicative} for the analysis in more general settings.
    \begin{enumerate}
        \item Initialize $w^{(1)}\in\mb{R}_{\ge0}^{\mc F}$ by setting $w^{(1)}_F=\frac{1}{|\mc{F}|}$ for all $F\in\mc F$.
        \item For each step $t=1,\dots,T$, find an independent set $\mc{I}^{(t)}\subseteq \mc{F}$ of $G_\mc{F}$ satisfying \begin{equation} \label{eq: indset with large weight}
            \sum_{F \in \mc{I}^{(t)}} w^{(t)}(F) \ge \frac{1}{\chi}\sum_{F \in \mc{F}} w^{(t)}(F),
        \end{equation} 
        and let $w^{(t+1)} \in \mb{R}^{\mc{F}}_{\ge 0}$ be the weighting given by $$
            w^{(t+1)}(F) = \begin{cases}
                w^{(t)}(F) \cdot \big(1+\frac{1}{4\chi^2} \big) & \text{ if } F \notin \mc{I}^{(t)},\\
                w^{(t)}(F) \cdot \big(1-\frac{1}{4\chi}+\frac{1}{4\chi^2} \big) & \text{ if } F \in \mc{I}^{(t)}.
            \end{cases}
        $$
    \end{enumerate}

    As discussed above, we can find the independent set $\mc{I}^{(t)}$ in running time $\mathrm{poly}(n^k)$.
    So it suffices to show that if we uniformly sample $i \in [T]$, then $\mathbb{P}[F \in \mc{I}^{(i)}] = \Omega(\chi^{-1})$ for all $F \in \mc{F}$.
    In other words, for each $F \in \mc{F}$, at least $\Omega(T/\chi)$ of the independent sets among $\mc{I}^{(1)},\dots,\mc{I}^{(T)}$ (with multiplicity) contain $F$.

    We first observe that $w^{(T+1)}(F) \le 1$ holds for all $F \in \mc{F}$.
    To see this, for any $t \in [T]$,
    \begin{align*}
        \sum_{F \in \mc{F}} w^{(t+1)}(F)
        &= \sum_{F \in \mc{I}^{(t)}} w^{(t+1)}(F) + \sum_{F \notin \mc{I}^{(t)}} w^{(t+1)}(F)\\
        &= \sum_{F \in \mc{F}} w^{(t)}(F) \cdot \left(1 + \frac{1}{4\chi^2} \right) - \frac{1}{4\chi}\sum_{F \in \mc{I}^{(t)}} w^{(t)}(F)
        \le \sum_{F \in \mc{F}} w^{(t)}(F),
    \end{align*}
    where we used \eqref{eq: indset with large weight} in the last inequality.
    Hence, $\sum_{F \in \mc{F}} w^{(T+1)}(F) \le\cdots\le\sum_{F \in \mc{F}} w^{(1)}(F)=1$.
    In particular, $w^{(T+1)}(F) \le 1$ for all $F \in \mc{F}$ since the weights are clearly all non-negative.

    Now, suppose $a$ of the independent sets among $\mc{I}^{(1)},\dots,\mc{I}^{(T)}$ contain $F$.
    Then, using that $1+x \ge e^{x/2}$ and $1-x \ge e^{-2x}$ for all $x \in [0,1/2]$, we know $$
        \begin{aligned}
            w^{(T+1)}(F) 
            = w^{(1)}(F)\cdot \Big(1+\frac{1}{4\chi^2}\Big)^{T-a}\Big(1-\frac{1}{4\chi}+\frac{1}{4\chi^2}\Big)^a
            \geq \frac{1}{|\mc{F}|} \cdot \Big(1+\frac{1}{4\chi^2}\Big)^{T-a} \Big(1-\frac{1}{2\chi}\Big)^{2a}
            \ge \frac{1}{|\mc{F}|} \cdot e^{\frac{T-a}{8\chi^2} - \frac{2a}{\chi}}.
        \end{aligned}
    $$
    We know that $w^{(T+1)}(F) \le 1$, so $\frac{T-a}{8\chi^2}-\frac{2a}{\chi} \le \ln|\mc{F}|$.
    Recall $T=20\chi^2\ln|\FF|$.
    Solving this inequality, we see that 
    $$
    a \ge\frac{12\chi^2\ln|\mc{F}|}{16\chi+1} \ge \frac{2\chi\ln|\mc{F}|}{3}=\Omega(T/\chi).
    $$
    Therefore every $F$ is contained in at least $\Omega(T/\chi)$ independent sets among $\mc{I}^{(1)}, \dots, \mc{I}^{(T)}$ (with multiplicity).
\end{proof}

\end{document}